\newtheorem{thm}{Theorem}[section]
\newtheorem{lem}[thm]{Lemma}
\newtheorem{cor}[thm]{Corollary}
\newtheorem{mydef}{Definition}[section]
\newtheorem{rem}{Remark}[section]
\theoremstyle{remark}
\newcommand\bb[1]{\mathbf{#1}}
\newcommand\ddfrac[2]{\frac{\displaystyle #1}{\displaystyle #2}}
\DeclareMathOperator*{\esssup}{ess\,sup}
\date{}
\begin{document}

\title{Contact in fluid-plate interaction: formation and detachment}
\author[Sr\dj{}an Trifunovi\'c]{Sr\dj{}an Trifunovi\'c$^1$}

\thanks{$^1$Department of Mathematics and Informatics, Faculty of Sciences, University of Novi Sad, Serbia. 
e-mail: srdjan.trifunovic@dmi.uns.ac.rs}

\begin{abstract}
	In this paper, we study an interaction problem between an elastic plate and a compressible viscous fluid located between the rigid bottom $z=0$ and the plate. First, by utilizing the vertical fluid dissipation, we show that $\ln\eta(t) \in L^1$ for any $t>0$ provided that $\ln\eta_0\in L^1$, ensuring that additional plate contact can form only on a set of a measure zero. Then, by utilizing the expanding capability of compressible fluid pressure, we show that all contact has to detach in finite time provided that the source force acting onto the plate is not pushing down excessively. Finally, we show that contact at any point can be detached in any given time with a strong enough source force localized around that point which is pulling the plate up. The results are based on a novel variational inequality which preserves key information about the total force applied onto the plate, even in presence of contact. This is the first result where detachment of contact is proven in fluid-structure interaction.
\end{abstract}

\maketitle

\bigskip
\begin{center}
    \large{\textit{Dedicated to \v{S}\'{a}rka Ne\v{c}asov\'{a} for her 60th birthday}}
\end{center}
\bigskip
\bigskip

\noindent
\textbf{Keywords and phrases:} {fluid-structure interaction, contact problem, post-contact dynamics, compressible Navier-Stokes}
\\${}$ \\
\textbf{AMS Mathematical Subject classification (2020):} {74M15 (Primary), 74F10, 76N06   (Secondary)}

\section{The model}
Here, we study an interaction problem between a compressible viscous fluid and an elastic plate. The plate moves only vertically and it is located above the rigid bottom $\{z=0\}$. Therefore, it is described by non-negative scalar displacement $\eta:G_T\to \mathbb{R}_0^+$, where $G_T:=(0,T)\times \Gamma$ and $\Gamma$ is a 2D torus. The displacement $\eta$ satisfies the linear plate equation on $G_T$:
	\begin{eqnarray}
		\eta_{tt}+ \Delta^2 \eta =-S^\eta \bb{f}_{fl}\cdot \mathbf{e}_z + F, \label{structureeqs}
	\end{eqnarray}
where $\bb{f}_{fl}$ is the force with which fluid acts onto the plate, $\bb{e}_z:=(0,0,1)$, $S^\eta:=\sqrt{1+|\nabla\eta|^2}$ is the Jacobian of the transformation from the Eulerian to the Lagrangian coordinates of the plate, and $F$ is the outer force. The graph 	of $\eta(t)$ is denoted as
    \begin{eqnarray*}
 \Gamma^\eta(t):= \{(x,y,\eta(t,x,y)): (x,y)\in \Gamma \}, \quad t\in [0,T].
    \end{eqnarray*}
The fluid fills the domain at time $t$ (see figure $\ref{fig2}$)
	\begin{eqnarray*}
		\Omega^\eta(t):=\{(x,y,z): (x,y)\in \Gamma, 0<z<\eta(t,x,y)\}, \quad t\in [0,T],
	\end{eqnarray*}
while the time-space cylinder is denoted as
	\begin{eqnarray*}
		Q_T^\eta:= \bigcup_{t\in (0,T)} \{t\} \times \Omega^\eta(t).
	\end{eqnarray*}
The fluid is described by the density $\rho:Q_T^\eta\to \mathbb{R}_0^+$ and velocity $\bb{u}=(u_1,u_2,u_3):Q_T^\eta\to \mathbb{R}^3$ and it is governed by the compressible Navier-Stokes equations on $Q_T^\eta$:
	\begin{equation}
		\begin{split}
              \partial_t \rho + \nabla \cdot (\rho \bb{u}) &= 0, \\
              \partial_t (\rho\bb{u}) + \nabla \cdot (\rho\bb{u}\otimes \bb{u})& = -\nabla p(\rho) +\nabla \cdot \mathbb{S}(\nabla \bb{u}), 
		\end{split} 
	\end{equation}
where 
\begin{eqnarray*}
    \mathbb{S}(\nabla \bb{u}):=\mu \left( \nabla \bb{u} + \nabla^\tau \bb{u}- \frac23\nabla \cdot \bb{u} \mathbb{I} \right) + \lambda \nabla \cdot \bb{u}\mathbb{I},\quad \mu>0, ~\lambda \geq 0,
\end{eqnarray*}
$\mathbb{I}$ is the $3\times 3$ identity matrix, and the pressure $p$ is set to be
	\begin{equation*}
		p(\rho)=\rho^\gamma.
	\end{equation*}
The fluid velocity satisfies the no-slip condition on the rigid bottom $\{z=0\}$:
\begin{eqnarray}
    \bb{u}(t,x,y,0) = 0, \quad t\in(0,T),~ (x,y)\in \Gamma.
\end{eqnarray}
Finally, the following kinematic and dynamic coupling conditions are assumed on $G_T$:
\begin{eqnarray}
    \partial_t \eta(t,x,y) \bb{e}_z&=& \bb{u}(t,x,y,\eta(t,x,y)),\label{kinc}\\
    \bb{f}_{fl}(t,x,y)&=&\big[(-p(\rho)\mathbb{I}+ \mathbb{S}(\nabla \bb{u}))\nu^\eta\big](t,x,y,\eta(t,x,y)), \label{dync}
\end{eqnarray}
where $\nu^\eta=(-\nabla \eta,1)/\sqrt{1+{|\nabla\eta|^2}}$ is the unit outward normal vector on the graph $\Gamma^\eta$.

\begin{figure}[h!]
		\centering
		\includegraphics[width=0.85\linewidth]{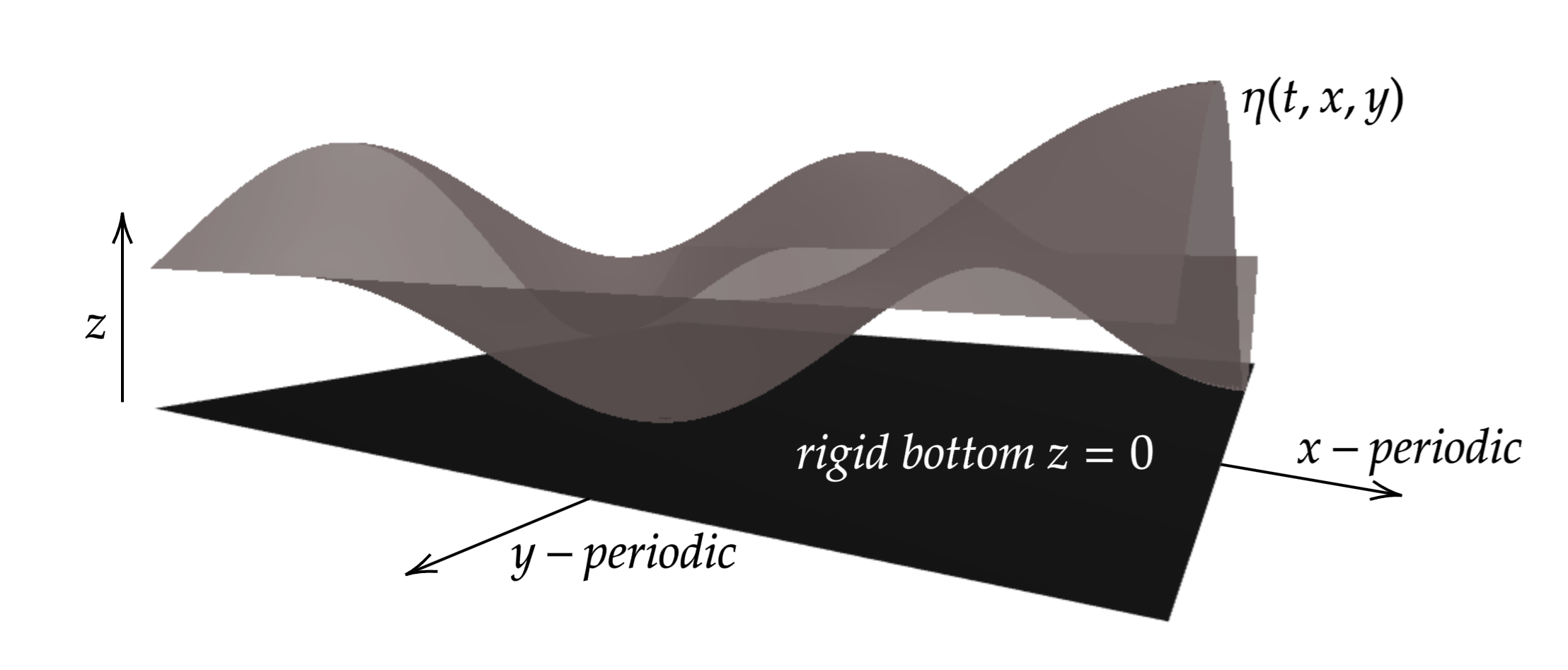}
		\caption{An example of the fluid domain $\Omega^\eta(t)$ located between the rigid bottom $z=0$ and elastic plate $z=\eta(t,x,y)$.}
		\label{fig2}
\end{figure}

\section{Weak formulation and main results}
First, the problem at hand allows contact which a priori induces irregular shapes of the fluid domain and consequently we cannot expect solutions to be regular. Indeed, this has been observed in various models (see for example \cite{star}). Moreover, any sufficiently regular solution cannot have contact as it was shown in \cite{BreitContact}. Therefore, the only reasonable framework is the one of weak solutions. We start with introducing some concepts well-known in fluid-structure interaction literature.

\bigskip

First, since the fluid domain can degenerate into multiple disconnected regions at any time (even possibly into continuum many of them as it was observed in \cite[Section~3.1]{MalteSrdjan}), some notions which are standard in fluid-structure interaction analysis have to be carefully redefined. Therefore, for a given $\eta(t) \in H^2(\Gamma)$ such that $\eta(t)>0$ a.e. on $\Gamma$, we start by introducing:
    \begin{eqnarray*}
        H_{0z}^1(\Omega^\eta(t)):= \{ f\in H^1(\Omega^\eta(t)): \gamma_{|O\times\{0\}}f=0 \text{ for every closed set } O \subset \{\eta(t)>0\} \},
    \end{eqnarray*}
where $H^1(\Omega^\eta(t)) \equiv W^{1,1}(\Omega^\eta(t))$ is the standard Sobolev space, $\gamma_{|O\times\{0\}}f$ is the trace of $f$ on $O\times\{0\}$, and $\{\eta(t)>0\}:=\{(x,y)\in \Gamma: \eta(t,x,y)>0\}$. Next, given $f\in H_{0z}^1(\Omega^\eta(t))$, the Lagrangian trace operator is defined as
    \begin{eqnarray*}
        \gamma_{|\Gamma^\eta(t)} f:= \hat{\gamma}_{|\Gamma^\eta(t)} e_0[f],
    \end{eqnarray*}
where $e_0[f]$ is the extension by zero to $\Gamma\times (-1,0)$ which preserves the norm $\| f\|_{H^1(\Omega^\eta(t))}=\|e_0[f]\|_{H^1(\Omega^\eta(t)\cup \Gamma\times (-1,0))}$ by Lemma $\ref{ext:zero}$, while $\hat{\gamma}_{|\Gamma^\eta(t)}$ is the standard Lagrangian trace defined as 
    \begin{eqnarray*}
        \hat{\gamma}_{|\Gamma^\eta(t)} g (x,y) := g(x,y,\eta(t,x,y)), \quad (x,y) \in \Gamma,
    \end{eqnarray*}
for any $g\in C^1(\overline{\Omega^\eta(t)\cup \Gamma\times(-1,0) })$ and then extended to a continuous linear operator from $H^1(\Omega^\eta(t)\cup \Gamma\times (-1,0))$ to $H^s(\Gamma)$ for any $s\in (0,1/2)$ (see \cite{boristrace}). For time-dependent functions, trace will be denoted as
    \begin{eqnarray*}
        \left(\gamma_{|\Gamma^\eta} f\right)(t)=\gamma_{|\Gamma^\eta(t)} f(t).
    \end{eqnarray*}
Finally, we define the standard Bochner spaces corresponding to the moving domains:
\begin{eqnarray*}
    L^p(0,T; L^q(\Omega^\eta(t))):= &&\{ f\in L^1(Q_T^\eta):\\ &&f(t) \in L^q(\Omega^\eta(t)) \text{ for a.a }t\in(0,T),~ \|f(t)\|_{L^q(\Omega^\eta(t))}\in L^p(0,T)\}, 
\end{eqnarray*}
with the norms (note that "$(t)$" is omitted in the norm for simplicity)
    \begin{eqnarray*}
         \|f\|_{L^p(0,T; L^q(\Omega^\eta))}&:=& \left(\int_0^T \|f(t)\|_{L^q(\Omega^\eta(t))}^p\right)^{\frac1p}, \quad  p\in[1,\infty), \\
        \|f\|_{L^\infty(0,T; L^q(\Omega^\eta))}&:=& \esssup_{t\in (0,T)} \|f(t)\|_{L^q(\Omega^\eta(t))},
    \end{eqnarray*}
and
    \begin{eqnarray*}
        L^p(0,T; W^{1,q}(\Omega^\eta(t))):= \{ f\in L^p(0,T; L^q(\Omega^\eta(t))): \nabla f \in L^p(0,T; L^q(\Omega^\eta(t))) \}, 
    \end{eqnarray*}
with the norms
    \begin{eqnarray*}
        \|f\|_{L^p(0,T; W^{1,q}(\Omega^\eta))}&:=& \left(\int_0^T \|f(t)\|_{W^{1,q}(\Omega^\eta(t))}^p \right)^{\frac1p}, \quad p\in[1,\infty), \\
         \|f\|_{L^\infty(0,T; W^{1,q}(\Omega^\eta))}&:=& \text{ess}\sup_{t\in (0,T)} \|f(t)\|_{W^{1,q}(\Omega^\eta(t))}.
    \end{eqnarray*}
Specially, the following space is defined
    \begin{eqnarray*}
        L^p(0,T; H_{0z}^1(\Omega^\eta(t))):= \{ f\in L^p(0,T; H^1(\Omega^\eta)): f(t) \in H_{0z}^1(\Omega^\eta(t)) \text{ for a.a. }t\in(0,T) \}.
    \end{eqnarray*}

\bigskip

The regularity and compatibility conditions for initial data are as follows
    \begin{eqnarray}
    \begin{cases}
         &\eta_0 >0 \text{ a.e. on } \Gamma ,\quad \eta_0 \in H^2(\Gamma), \quad \ln\eta_0 \in L^1(\Gamma),\quad v_0 \in L^2(\Gamma), \\
	   &\rho_0\in L^{\gamma}(\Omega^{\eta_0}), \quad (\rho\bb{u})_0 \in L^{\frac{2\gamma}{\gamma+1}}(\Omega^{\eta_0}) , \quad  \frac{(\rho \bb{u})_0^2}{\rho_0} \in L^1(\Omega^{\eta_0}),\\ 
	   &\rho_0>0 \text{ in } \{(x,y,z)\in \Omega^{\eta_0}: (\rho\bb{u})_0(x,y,z) >0\},\quad \int_{\Omega^{\eta_0}}\rho_0 := m>0 ,\quad \rho_0 \geq 0,   \end{cases}  \label{str:id} 
    \end{eqnarray}
where $\Omega^{\eta_0}:=\Omega^\eta(0)$ and $m$ is the total mass. Note that there is a new condition on $\ln\eta_0$ which allows contact initially. Let us now introduce a concept of weak solution which will be studied:

\begin{mydef}[\textbf{Weak solution}]\label{weak:sol:def}
We say that $(\rho,\bb{u},\eta)$ is a weak solution to \eqref{structureeqs}-\eqref{dync} if the initial data satisfies $\eqref{str:id}$, $\gamma>3$ and:
\begin{enumerate}
    \item $\int_{\Omega^\eta(t)}\rho(t)=m$ and $\eta(t)>0$ a.e. on $\Gamma$ for all $t\in[0,T]$, and the solution is in the following regularity class:
      \begin{eqnarray*}
  	&&\rho \in L^\infty(0,T; L^\gamma(\Omega^\eta(t))), \quad        \bb{u}\in L^2(0,T; H_{0z}^1(\Omega^\eta(t))),\\ 
  	 &&\rho |\bb{u}|^2 \in L^\infty(0,T;L^1(\Omega^\eta(t))),    \quad \frac{\rho^\gamma}{\eta}, ~\frac{\rho|u_3|^2}{\eta}\in L^1(Q_T^\eta),\\
         &&\eta \in W^{1,\infty}(0,T; L^2(\Gamma))\cap L^\infty(0,T; H^2(\Gamma)), \quad  \ln\eta \in C([0,T]; L^1(\Gamma));
    \end{eqnarray*}
\item The kinematic coupling $\gamma_{|{\Gamma}^\eta}\bb{u} = \partial_t \eta \bb{e}_z$ holds a.e. on $G_T$;
\item The renormalized continuity equation
    \begin{equation}\label{reconteqweak}
  	\int_{Q_T^\eta} \rho B(\rho)( \partial_t \varphi +\bb{u}\cdot \nabla  \varphi) =\int_{Q_T^\eta} b(\rho)(\nabla\cdot \bb{u}) \varphi - \int_{\Omega^{\eta_0}}\rho_0B(\rho_0) \varphi(0)
	\end{equation}
holds for all functions $\varphi \in C_c^\infty([0,T)\times \mathbb{R}^3)$ and any $b\in L^\infty (0,\infty) \cap C[0,\infty)$ such that $b(0)=0$ with $B(\rho)$ being any primitive function to $\frac{b(\rho)}{\rho^2}$; 
\item The coupled momentum equation
  \begin{eqnarray}
        &&\int_{Q_T^\eta} \rho \bb{u} \cdot\partial_t                     \boldsymbol\varphi + \int_{Q_T^\eta}(\rho \bb{u} \otimes        \bb{u}):\nabla\boldsymbol\varphi +\int_{Q_T^\eta} \rho^\gamma   (\nabla \cdot \boldsymbol\varphi)-  \int_{Q_T^\eta} \mathbb{S}(     \nabla\bb{u}): \nabla \boldsymbol\varphi \nonumber\\
        &&+\int_{G_T} \partial_t \eta \partial_t \psi  - \int_{G_T}\Delta\eta \Delta\psi +\int_{G_T} F\psi \nonumber\\
        &&=  -\int_{\Omega^{\eta_0}}(\rho\bb{u})_0\cdot\boldsymbol\varphi(0) - \int_\Gamma v_0 \psi \label{momeqweak}
\end{eqnarray}
holds for all $\boldsymbol\varphi \in C_c^\infty([0,T)\times \mathbb{R}^3)$ and all $\psi\in C_c^\infty([0,T)\times \Gamma)$ such that $\boldsymbol\varphi(t,x,y,\eta(t,x))=\psi(t,x,y)\bb{e}_z$ on $G_T$ and $\boldsymbol\varphi=0$ on $\{z=0\}$;
\item The contact inequality holds for a.a. $t\in(0,T)$:
    \begin{eqnarray}
        &&\int_0^t\int_{\Gamma} F +\int_0^t \int_{\Omega^\eta}  \frac{\rho^\gamma}\eta +\int_0^t \int_{\Omega^\eta} \frac{\rho |u_3|^2}\eta - \left(\frac{4\mu}3 +\lambda \right)\int_{\Gamma} \ln\eta(t)  \nonumber\\
        &&\leq \int_0^t \int_{\Omega^\eta} \rho u_3 z \frac{\partial_t \eta}{\eta^2}  +\int_0^t \int_{\Omega^\eta} \rho u_1 u_3 \frac{z\partial_x\eta}{\eta^2} + \int_0^t \int_{\Omega^\eta} \rho u_2 u_3\frac{z \partial_y\eta}{\eta^2}\nonumber \\
        &&\quad -\mu\int_0^t \int_{\Omega^\eta } (\partial_z u_1 + \partial_x u_3) \frac{z\partial_x \eta}{\eta^2}  -\mu\int_0^t \int_{\Omega^\eta } (\partial_z u_2 + \partial_y u_3) \frac{z \partial_y \eta}{\eta^2} \nonumber \\
        &&\quad +\left(\lambda - \frac{2\mu}3\right)\int_{Q_T^\eta} \left( u_1 \frac{ \partial_x \eta}{\eta^2}+   
        u_2 \frac{\partial_y \eta}{\eta^2}    \right) - \left(\frac{4\mu}3 +\lambda \right)\int_{\Gamma} \ln \eta_{0} \nonumber \\
        &&\quad+\int_{\Omega^\eta(t)}\rho(t)u_3(t) \frac{z}{\eta(t)} -\int_{\Omega^{\eta_{0}}}(\rho u_3)_0\frac{z}{\eta_{0}} +\int_\Gamma \partial_t \eta(t)-\int_\Gamma v_0;  \label{ver:ineq}
    \end{eqnarray}
\item The energy inequality holds for a.a. $t\in (0,T)$
    \begin{eqnarray}
        &&\displaystyle{\int_{\Omega^\eta(t)} \left( \frac{1}{2} \rho       |\bb{u}|^2 + \frac{\rho^\gamma}{\gamma-1}  \right)(t) +  \int_{\Gamma} \left(\frac{1}{2}|\partial_t \eta|^2 + \frac{1}{2} |\Delta \eta|^2 \right)(t)} + \int_0^t \int_{\Omega^\eta} \mathbb{S}(\nabla \bb{u}):\nabla \bb{u} \nonumber \\ 
        &&  \leq \displaystyle{\int_{\Omega^{\eta_0}} \left( \frac{1}{2\rho_0} |(\rho\bb{u})_0|^2 + \frac{\rho_0^\gamma}{\gamma-1}   \right)} + \int_{\Gamma} \left(\frac{1}{2}|v_0|^2 + \frac{1}{2} |\Delta \eta_0|^2  \right)+ \int_0^t \int_\Gamma F \partial_t \eta .\label{en:ineq} 
    \end{eqnarray}
\end{enumerate}
\end{mydef}

\begin{rem}
(1) The inequality $\eqref{ver:ineq}$ in this definition is new. It can formally be obtained from $\eqref{momeqweak}$ by choosing $\boldsymbol\varphi=\chi_{\{0,t \}}\frac{z}{\eta}\bb{e}_z$ and $\psi =\chi_{\{0,t \}}$, at least provided that $\eta>0$, as it was done in Section \ref{cont:sec}. It preserves crucial information about total forces acting onto the plate, even if $\{\eta = 0\}\neq \emptyset$, which is in contrast with $\eqref{momeqweak}$ where this information is lost. The inequality, instead of equality, comes from possible defects of $\frac{\rho^\gamma}\eta$, $\partial_t(\ln\eta^-)$ and the contact force which is a result of plate not being able to pass through the rigid boundary $\{z=0\}$. For some applications, it is worth noting that it can be localized in time-space, i.e. it holds in the form of $\eqref{eq:contact}$ as an inequality for all non-negative $\psi \in C_c^\infty([0,T)\times \Gamma)$ such that $\partial_t \psi\leq 0$.\\
(2) Without contact, weak solutions for similar problems were constructed in \cite{Breit,trwa2}, strong solutions in \cite{MRT,MT,mitra} and weak-strong uniqueness was shown in \cite{WSU}.
\end{rem}

The first result considers existence of weak solutions:	
\begin{thm}[\textbf{Global existence}]\label{main1}
Let initial data satisfy $\eqref{str:id}$, let $T>0$, $\gamma>3$ and $F\in L^1(0,T; L^2(\Gamma))$. Then, there exists a weak solution $(\rho,\bb{u},\eta)$ in the sense of Definition $\ref{weak:sol:def}$ on $(0,T)$.
\end{thm}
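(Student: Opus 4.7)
The plan is to construct the solution by approximation: lift the initial plate height so that the existing non-degenerate fluid-plate theory applies, then pass to the limit using the new contact inequality \eqref{ver:ineq} as the a priori bound that prevents additional contact from forming on a set of positive measure. For $\epsilon>0$ I would set $\eta_0^\epsilon:=\eta_0+\epsilon$ and extend $(\rho_0,(\rho\bb{u})_0)$ to $\Omega^{\eta_0^\epsilon}$---for instance by a small constant density and zero momentum on the added strip---in a way that preserves \eqref{str:id} uniformly in $\epsilon$. Since $\eta_0^\epsilon\geq\epsilon>0$, the non-degenerate existence theory (\cite{Breit,trwa2}, with routine modifications for the rigid bottom and the source $F\in L^1(0,T;L^2(\Gamma))$) furnishes an approximate weak solution $(\rho_\epsilon,\bb{u}_\epsilon,\eta_\epsilon)$ satisfying \eqref{reconteqweak}, \eqref{momeqweak}, and \eqref{en:ineq}.

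A Gr\"onwall argument based on \eqref{en:ineq}, together with the trace estimate $\|\partial_t\eta_\epsilon\|_{L^2(\Gamma)}\leq C\|\bb{u}_\epsilon\|_{H^1_{0z}(\Omega^{\eta_\epsilon}(t))}$ and the assumption on $F$, yields the usual uniform bounds on $\rho_\epsilon,\bb{u}_\epsilon,\eta_\epsilon$ in the regularity class of Definition \ref{weak:sol:def}. The new key step is to insert the test pair $(\boldsymbol{\varphi},\psi)=(\chi_{(0,t)}\frac{z}{\eta_\epsilon}\bb{e}_z,\chi_{(0,t)})$, after the standard time mollification and admissible since $\eta_\epsilon>0$, into \eqref{momeqweak}, producing the equality form of \eqref{ver:ineq} at the approximate level. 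Controlling the right-hand side by the already-bounded energy quantities---and invoking $\gamma>3$ to gain enough pressure integrability via a Bogovskii-type improvement to absorb $\rho_\epsilon^\gamma/\eta_\epsilon$---gives the crucial new uniform estimate
\begin{equation*}
    \sup_{t\in[0,T]}\|\ln\eta_\epsilon(t)\|_{L^1(\Gamma)}+\left\|\frac{\rho_\epsilon^\gamma}{\eta_\epsilon}\right\|_{L^1(Q_T^{\eta_\epsilon})}+\left\|\frac{\rho_\epsilon|u_{\epsilon,3}|^2}{\eta_\epsilon}\right\|_{L^1(Q_T^{\eta_\epsilon})}\leq C.
\end{equation*}

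Aubin--Lions applied to $\eta_\epsilon$ gives $\eta_\epsilon\to\eta$ in $C([0,T];C^s(\Gamma))$ for some $s\in(0,1)$, and Fatou transfers the $\ln\eta_\epsilon$ bound to the limit, so $\eta(t)>0$ a.e.\ on $\Gamma$ and $\ln\eta\in L^\infty(0,T;L^1(\Gamma))$. Compactness for $(\rho_\epsilon,\bb{u}_\epsilon)$ on the possibly degenerating domain is obtained by extending by zero below the rigid bottom (consistent with the $H^1_{0z}$ definition), combined with the Lions--Feireisl effective viscous flux identity and the DiPerna--Lions renormalized continuity equation \eqref{reconteqweak}. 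Passage to the limit in \eqref{reconteqweak}, \eqref{momeqweak} and \eqref{en:ineq} then follows by the standard weak fluid-structure machinery, using the uniform H\"older estimate on $\eta_\epsilon$ to make sense of test functions in the limiting domain.

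The principal obstacle is the limit of the contact inequality itself. The weights $\rho_\epsilon^\gamma/\eta_\epsilon$ and $\rho_\epsilon|u_{\epsilon,3}|^2/\eta_\epsilon$ are only weakly compact in $L^1$ and may concentrate on the emerging contact set $\{\eta=0\}$ in the limit, while the reaction force preventing the plate from crossing the rigid bottom contributes a nonnegative defect measure. Both effects are compatible with the direction of the inequality in \eqref{ver:ineq}: lower semicontinuity of $-\int_\Gamma\ln\eta_\epsilon$ under the strong convergence of $\eta_\epsilon$ gives the left-hand side, while the nonnegative defects on the right-hand side can only decrease it, producing \eqref{ver:ineq} in the limit.
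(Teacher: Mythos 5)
Your overall strategy (regularize away from contact, derive the contact identity by testing with $(\tfrac{z}{\eta}\bb{e}_z,1)$, extract the uniform bounds on $\ln\eta$, $\rho^\gamma/\eta$, $\rho|u_3|^2/\eta$, and pass to the limit) matches the spirit of the paper, but there is a genuine gap at the very first step. Lifting the initial datum to $\eta_0+\epsilon$ and invoking the non-degenerate theory of \cite{Breit,trwa2} does not furnish an approximate solution on all of $(0,T)$: those constructions produce a solution only up to the first time the domain degenerates, i.e.\ up to the first contact time $T_\epsilon$ of $\eta_\epsilon$ with $\{z=0\}$, and nothing in your argument prevents $T_\epsilon\to 0$. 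The a priori bound $\ln\eta_\epsilon\in L^1$ coming from the contact inequality only says contact occupies a null set in space; it does not exclude pointwise contact, which is exactly what kills the cited existence theory. The paper resolves this by building the approximations from scratch with an \emph{artificial contact force} $F_{con,\delta}\in\mathcal{M}^+$ that keeps $\eta\geq\delta>0$ on the whole interval $(0,T)$ (together with operator splitting, domain extension and penalized coupling); the barrier $\eta\geq\delta$ is what legitimizes the test function $\tfrac{z}{\eta}\bb{e}_z$ and makes the approximate solutions global. This device, and the subsequent limit $\delta\to 0$ in which $F_{con,\delta}$ concentrates on $\{z=0\}$ and drops out of the momentum equation, is the essential missing ingredient in your plan.

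Two smaller points. First, $\rho_\epsilon^\gamma/\eta_\epsilon$ does not need to be ``absorbed'' by a Bogovskii estimate: it sits on the left-hand side of the contact identity with a favorable sign, so it is controlled for free once the right-hand side is bounded; the Bogovskii/inverse-Laplacian argument is used only for the local $L^{\gamma+1}$ pressure bound needed to rule out concentration of $\rho^\gamma$ (and the new observation of the paper is that near contact this is instead supplied by the $L^1$ bound on $\rho^\gamma/\eta$ itself, cf.\ Lemma \ref{weak:conv:pr:delta}). Second, the right-hand side of \eqref{ver:ineq} consists of sign-indefinite terms such as $\int\rho u_3 z\,\partial_t\eta/\eta^2$, so one cannot argue that ``nonnegative defects can only decrease it''; the paper has to prove equi-integrability (boundedness in $L^p(Q_T^\eta)$ for some $p>1$, via interpolation and $\gamma>3$) of each integrand and then identify the limits locally away from contact. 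Your lower-semicontinuity argument is valid only for the signed terms on the left.
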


The second result considers the detachment of all contact as a consequence of pressure pushing up the plate. As time goes on, all effects depending on fluid velocity (which includes the structure velocity due to kinematic coupling) will dissipate and all that will remain will be plate bending, pressure and source force. As the total bending vanishes $\int_\Gamma \Delta^2 \eta=0$ due to periodic boundary conditions,  pressure will be able to push the plate up and break the contact if the outer force $F$ is not pushing down too much. In particular, if $F\in L^1(0,\infty; L^2(\Gamma))$, then $F$ will lose its strength over time resulting in pressure pushing up the structure and breaking all the contact unconditionally, since the total pressure is bounded from below due to fixed mass. On the other hand, it might be of independent interest to observe a time-independent force $F\in L^1(\Gamma)$ which models for example atmospheric pressure or gravity acting (downwards) onto the plate. In this case, as time goes on this force will remain and the detachment will happen if the total outer force $\int_\Gamma F$ is not too negative. Therefore, the following result is split into two cases:

\begin{thm}[\textbf{Detachment of all contact by pressure}]\label{main2}
Let initial data satisfy $\eqref{str:id}$, $\gamma>3$ and let $(\rho,\bb{u},\eta)$ be a weak solution in the sense of Definition $\ref{weak:sol:def}$ defined on $(0,\infty)$ corresponding to a given force $F$. Then:
\begin{enumerate}
    \item If $F\in L^1(0,\infty; L^2(\Gamma))$, then there exists a $T>0$ such that $\eta(T)>0$;
    \item If $F\in L^1(\Gamma)$ such that\footnote{The existence of a weak solution in this case follows in the same way by noticing that the term $\int_0^t\int_\Gamma F \partial_t \eta$ in the energy inequality $\eqref{en:ineq}$ can be controlled as in $\eqref{l1:case}$.} $\int_\Gamma F>-A$ with $A>0$ being small enough with respect to initial data and $\Gamma$, then then there exists a $T>0$ such that $\eta(T)>0$.
\end{enumerate}
\end{thm}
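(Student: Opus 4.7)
I would base the proof on the contact inequality \eqref{ver:ineq}, rearranged as
\begin{equation*}
\int_\Gamma \ln\eta(t) \;\geq\; \int_\Gamma \ln\eta_0 + \int_0^t\!\!\int_\Gamma F + \int_0^t\!\!\int_{\Omega^\eta}\frac{\rho^\gamma}{\eta} + \int_0^t\!\!\int_{\Omega^\eta}\frac{\rho u_3^2}{\eta} - R(t),
\end{equation*}
where $R(t)$ lumps together the convective, viscous and boundary contributions on the right of \eqref{ver:ineq}. The plan is to argue by contradiction: suppose $\{\eta(t)=0\}\neq\emptyset$ for every $t>0$, and show the right-hand side above grows at least linearly in $t$ while the left-hand side remains uniformly bounded.

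\textbf{Step 1 --- energy and $L^\infty$-bound on $\eta$.} From \eqref{en:ineq} I would first produce uniform-in-time bounds on the quantities in the regularity class of Definition~\ref{weak:sol:def}, and in particular on $\|\eta\|_{L^\infty(C(\Gamma))}\leq M$ via $H^2(\Gamma)\hookrightarrow C(\Gamma)$ in 2D. In case (1), $F\in L^1(0,\infty;L^2(\Gamma))$ yields $\int_0^t\!\int_\Gamma F\partial_t\eta\leq \|F\|_{L^1 L^2}\sup_s\sqrt{2E(s)}$, and a Young/Gronwall absorption closes the estimate. In case (2), $F$ time-independent, integration by parts gives $\int_0^t\!\int_\Gamma F\partial_t\eta = \int_\Gamma F(\eta(t)-\eta_0)$; decomposing $\eta=\bar\eta(t)+\tilde\eta(t)$, the zero-mean part $\int_\Gamma F\tilde\eta$ is absorbed into $\|\Delta\eta\|_{L^2}^2$ via Poincaré, while the mean part contributes $|\Gamma|\bar F\,\bar\eta(t)\geq -A\bar\eta(t)$ and is balanced by the pressure push below when $A$ is small. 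Should $\bar\eta(t)$ nevertheless escape to $+\infty$, the bound $\|\tilde\eta\|_{C(\Gamma)}\lesssim \|\Delta\eta\|_{L^2}\lesssim\sqrt{E(t)}$ (which grows at most like $\sqrt{\bar\eta}$) already forces $\min_\Gamma\eta(t)\to+\infty$ and detachment is immediate.

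\textbf{Step 2 --- pressure lower bound and control of $R(t)$.} With $\|\eta\|_\infty\leq M$, Jensen on $\rho\mapsto\rho^\gamma$ combined with the mass conservation $\int_{\Omega^\eta}\rho=m$ yields
\begin{equation*}
\int_{\Omega^\eta(t)}\frac{\rho^\gamma}{\eta}\;\geq\;\frac{1}{M}\cdot\frac{m^\gamma}{|\Omega^\eta(t)|^{\gamma-1}}\;\geq\;\frac{m^\gamma}{M^\gamma|\Gamma|^{\gamma-1}}=:c_1>0,
\end{equation*}
so $\int_0^t\!\int_{\Omega^\eta}\rho^\gamma/\eta \geq c_1 t$. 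For $R(t)$ I would exploit $z/\eta\leq 1$ pointwise: the convective cross-terms $\int_0^t\!\int_{\Omega^\eta}\rho u_i u_3\cdot(z\nabla\eta/\eta^2)$ split via Young into $\epsilon\int_0^t\!\int_{\Omega^\eta}\rho|\bb{u}|^2/\eta$ (absorbable into the left-hand side) plus a remainder controlled by the dissipation $\int_0^t\mathbb{S}(\nabla\bb{u}):\nabla\bb{u}\leq E(0)$ and $\int_0^t\|\partial_t\eta\|_{L^2}^2\leq Ct$; the viscous cross-terms are absorbed directly into the dissipation; the point-in-time terms at $0$ and $t$ are bounded by $\sqrt{E}$ via Cauchy-Schwarz. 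The outcome is $R(t)\leq \epsilon c_1 t + C$ for any preassigned $\epsilon\in(0,1)$, provided the constants in the Young splitting are kept small compared to $c_1$.

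\textbf{Step 3 --- conclusion, and main obstacle.} Plugging into the rearranged contact inequality gives $\int_\Gamma\ln\eta(t)\geq(1-\epsilon)c_1 t + \int_0^t\!\int_\Gamma F - C$. In case (1), $|\int_0^t\!\int_\Gamma F|\leq \|F\|_{L^1 L^2}|\Gamma|^{1/2}$ is bounded, so the right-hand side tends to $+\infty$. In case (2), $\int_0^t\!\int_\Gamma F = t\int_\Gamma F\geq -At$, and choosing $A<(1-\epsilon)c_1$ makes the right-hand side tend to $+\infty$ as well. Yet $\int_\Gamma\ln\eta(t)\leq |\Gamma|\log M$ is bounded, contradicting the assumption that contact persists. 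Hence $\eta(T)>0$ everywhere at some finite $T$. The hardest point is Step~1 in case (2): the threshold $A<c_1$ is expressed in terms of $c_1=m^\gamma/(M^\gamma|\Gamma|^{\gamma-1})$, but $M$ itself arises from an energy bound that uses the smallness of $A$; untangling this circularity---or, equivalently, rigorously executing the $\bar\eta\to\infty$ alternative---is where the assumption ``not pushing down excessively'' enters quantitatively.
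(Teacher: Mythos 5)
Your overall architecture is the same as the paper's: argue by contradiction, use \eqref{max:est} under persistent contact to get a uniform bound $\|\eta\|_{L^\infty((0,T)\times\Gamma)}\leq H$, lower-bound the pressure term $\int_{\Omega^\eta}\rho^\gamma/\eta$ by a positive constant via mass conservation and H\"older, and let the resulting linear-in-$t$ growth in the contact inequality \eqref{ver:ineq} overwhelm the remaining terms. (The paper concludes by showing $\max_t|\eta(t)|_\infty$ must blow up rather than by bounding $\int_\Gamma\ln\eta(t)$ from above, but under the standing assumption $\|\eta\|_\infty\leq H$ these are equivalent.)

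There is, however, a genuine gap in your Step 2, precisely where you control $R(t)$. The dangerous terms carry the singular weights $z\partial_t\eta/\eta^2$ and $z\nabla\eta/\eta^2$, and your proposed tools do not reach them. First, $\int_0^t\|\partial_t\eta\|_{L^2(\Gamma)}^2\leq Ct$ does not control $\int_0^t\int_{\Omega^\eta}|\partial_t\eta|^2 z^2/\eta^4\,$ (which after the $z$-integration is $\int_0^t\int_\Gamma|\partial_t\eta|^2/\eta$, divergent a priori near contact); the paper needs Lemma \ref{lem:etat}, i.e.\ the kinematic coupling plus the vertical dissipation, to get $\int_\Gamma|\partial_t\eta|^2/\eta\leq\int_{\Omega^\eta}|\partial_z u_3|^2$, whose time integral is bounded by the total dissipation \emph{uniformly in $t$}. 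Likewise the weight $|\nabla\eta|/\eta$ requires Lemma \ref{eta:n} ($\int_\Gamma|\nabla\eta|^{2s}/\eta^s\leq C\int_\Gamma|\nabla^2\eta|^s$), which you do not invoke. Second, your claim that $R(t)\leq\epsilon c_1 t+C$ ``for any preassigned $\epsilon$'' cannot be obtained by the Young splitting you describe: making the absorbable piece $\epsilon\int\rho|u_3|^2/\eta$ small forces the complementary constant $C_\epsilon$ to blow up, so the remainder is $C_\epsilon\cdot Ct$, not $\epsilon c_1 t$, and there is no reason for $C_\epsilon C<c_1$. The paper's estimates \eqref{bound1}--\eqref{bound3} instead yield $R(t)\leq C(1+\sqrt t)$ with a $t$-independent constant, which is what actually lets the linear pressure growth win. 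Finally, the circularity you flag in case (2) is resolved in the paper by imposing the auxiliary condition $\|F\|_{L^1(\Gamma)}\leq E_0$ (so that $H$ depends only on the initial data) and only then requiring $\int_\Gamma F>-m^\gamma/(|\Gamma|^{\gamma-1}H^\gamma)$; your ``$\bar\eta\to\infty$ alternative'' is unnecessary once the contradiction hypothesis $\min_\Gamma\eta(t)=0$ for all $t$ is combined with \eqref{max:est}.
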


The last result considers the detachment of contact at a point by a strong localized force pulling up. In a sense, it can be described in terms of control theory, i.e. we show that our system is controllable from a state with contact to a state without contact for any given time by constructing an explicit control which achieves this, however without a precise target state but rather a family of target states. The proof is based on a contradiction argument. Namely, if we assume that the contact cannot be broken, then the source force can be localized and increased arbitrarily  near that point without adding almost any energy to the system, as the plate is not able to move too much near contact point. This is however in contradiction with the contact inequality $\eqref{ver:ineq}$, as increasing the source force $F$ without increasing the energy will increase the left-hand side of $\eqref{ver:ineq}$ and keep the right-hand side bounded.

\begin{thm}[\textbf{Detachment of contact at a point by outer force}]\label{main3}
Let initial data satisfy $\eqref{str:id}$, $\gamma>3$ and let $\eta_0(x_0,y_0)=0$ for some $(x_0,y_0)\in \Gamma$. Then, for any $T>0$, there exists a force $F\in L^1(\Gamma)$ with $F\geq 0$ such that for any weak solution $(\rho,\bb{u},\eta)$ in the sense of Definition $\ref{weak:sol:def}$ defined on $(0,T)$, one has $\eta(t,x_0,y_0)>0$ for some $t\in(0,T)$.
\end{thm}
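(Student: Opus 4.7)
The approach is a proof by contradiction that exploits the contact inequality $\eqref{ver:ineq}$ together with the crucial fact that a time-independent force produces an energy contribution controlled purely by the endpoint displacements. For parameters $\lambda,r>0$ to be chosen, let $\phi_r\in C_c^\infty(B_r(x_0,y_0))$ be a non-negative bump with $\int_\Gamma\phi_r=1$, and set $F:=\lambda\phi_r$; then $F\in L^1(\Gamma)$ and $F\geq 0$, as required. Suppose, towards contradiction, that there exists a weak solution $(\rho,\bb{u},\eta)$ on $(0,T)$ with $\eta(\cdot,x_0,y_0)\equiv 0$. Interpolating $L^\infty(0,T;H^2(\Gamma))$ with $W^{1,\infty}(0,T;L^2(\Gamma))$ and invoking the 2D Sobolev embedding $H^{2-\epsilon}(\Gamma)\hookrightarrow C^{0,\alpha}(\Gamma)$ for suitable $\alpha>0$, one obtains $\eta\in C([0,T]\times\Gamma)$, so the pointwise condition is meaningful. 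Combined with $\eta\geq 0$ and the uniform $H^2$-bound, it forces
\begin{equation*}
\sup_{t\in[0,T]}\sup_{(x,y)\in B_r(x_0,y_0)}\eta(t,x,y)\leq C_\eta r^\alpha,
\end{equation*}
and similarly for $\eta_0$, with $C_\eta$ depending only on $\sup_t\|\eta(t)\|_{H^2(\Gamma)}$.

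The decisive consequence of $F$ being time independent and supported in $B_r$ is
\begin{equation*}
\int_0^T\!\int_\Gamma F\,\partial_t\eta\;=\;\int_\Gamma F\bigl(\eta(T)-\eta_0\bigr)\;\leq\;2C_\eta\,\lambda r^\alpha.
\end{equation*}
Inserting this into $\eqref{en:ineq}$ bounds the full energy and dissipation of $(\rho,\bb{u},\eta)$—and hence every norm of the solution that appears on the right-hand side of $\eqref{ver:ineq}$—by $E_0+2C_\eta\lambda r^\alpha$, where $E_0$ depends only on initial data. Each quadratic-in-velocity and viscous term on the right of $\eqref{ver:ineq}$ carries a factor $z/\eta\in[0,1]$ and a factor $\nabla\eta/\eta=\nabla\ln\eta$, which is absorbed by Cauchy--Schwarz using the dissipation and the $L^1$-control of $\ln\eta$ built into $\eqref{ver:ineq}$ itself; the boundary terms involving $\rho u_3 z/\eta$ and $\int_\Gamma\partial_t\eta$ are bounded directly by the energy. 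Together with $\int_0^T\!\int_\Gamma F=T\lambda$ on the left, $\eqref{ver:ineq}$ evaluated at a suitable $t\in(T/2,T)$ reduces to
\begin{equation*}
\tfrac{T}{2}\lambda\;\leq\;K\bigl(1+E_0+\lambda r^\alpha\bigr),
\end{equation*}
with $K$ independent of $\lambda$ and $r$. Choosing $r=r(\lambda)$ so that $\lambda r^\alpha\to 0$ (for instance $r=\lambda^{-2/\alpha}$) and then $\lambda$ sufficiently large makes the right-hand side bounded while the left tends to infinity, the desired contradiction.

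The hard part is the book-keeping underlying the reduction in the previous paragraph: every right-hand-side term of $\eqref{ver:ineq}$ must be dominated by an expression of the form $C_0(1+E_0+\lambda r^\alpha)$ with a constant $C_0$ that genuinely does not depend on $\lambda$. The delicate estimates involve $|\nabla\eta|/\eta=|\nabla\ln\eta|$, where no pointwise lower bound on $\eta$ is available and where one must also uniformly control $\|\ln\eta(t)\|_{L^1(\Gamma)}$; these quantities must be absorbed into the dissipation and into the logarithmic control that $\eqref{ver:ineq}$ itself provides when chained from the initial time, so the argument is mildly circular and requires a Grönwall-type absorption to extract an unconditional bound. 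Once this bookkeeping is executed, the single localized push $F=\lambda\phi_r$ with $\lambda$ large and $r=r(\lambda)$ small validates the claim.
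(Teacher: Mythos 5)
Your proposal is correct and follows essentially the same route as the paper: assume the plate stays pinned at $(x_0,y_0)$, take a normalized bump force whose amplitude blows up as its support shrinks, use the H\"older estimate $\eta(t,x,y)\leq C\|\Delta\eta(t)\|_{L^2(\Gamma)}\,\mathrm{dist}((x,y),(x_0,y_0))^\alpha$ to keep $\int_0^T\int_\Gamma F\partial_t\eta$ (hence the energy) bounded, and contradict the contact inequality $\eqref{ver:ineq}$ since $\int_\Gamma F\to\infty$. The only remark worth making is that the ``mildly circular, Gr\"onwall-type'' step you worry about is handled in the paper by a plain Young-inequality absorption of $\tfrac12\|\Delta\eta(t)\|_{L^2(\Gamma)}^2$ into the left side of $\eqref{en:ineq}$, and the right-hand side of $\eqref{ver:ineq}$ is bounded via Lemmas $\ref{lem:etat}$ and $\ref{eta:n}$ (as in $\eqref{bound1}$--$\eqref{bound3}$) rather than via the $L^1$-control of $\ln\eta$.
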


It is worth noting that the same results can be obtained in a 2D/1D case where plate is replaced by a beam, by using better Sobolev imbedding results for $\bb{u}$ and thus lowering $\gamma$:
\begin{cor}
The same results obtained in Theorem  $\ref{main1}$,  Theorem $\ref{main3}$ and  Theorem $\ref{main2}$ hold in the 2D/1D case with $\gamma>2$.
\end{cor}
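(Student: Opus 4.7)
The plan is to re-run the proofs of Theorem \ref{main1}, Theorem \ref{main2} and Theorem \ref{main3} in the reduced dimension, tracking every place where the assumption $\gamma>3$ was genuinely invoked and showing that each such step relaxes to $\gamma>2$ thanks to the stronger Sobolev embedding available for the fluid velocity in two spatial dimensions. Concretely, when $\Omega^\eta(t)\subset\mathbb{R}^2$ one has $H^1(\Omega^\eta(t))\hookrightarrow L^q(\Omega^\eta(t))$ for every $q\in[1,\infty)$, with embedding constants that can be made uniform in $t$ from the $L^\infty(0,T;H^2(\Gamma))$ control of $\eta$, whereas in three dimensions only the critical embedding into $L^6$ was available. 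Consequently $\bb{u}\in L^2(0,T;L^q(\Omega^\eta))$ for every finite $q$, which systematically weakens every integrability demand placed on $\rho$ in the closure estimates.

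For the existence analog I would follow the construction underlying Theorem \ref{main1} step by step, with a 1D beam and a 2D fluid cross-section. The only genuinely delicate point is the improved pressure estimate obtained by testing the momentum equation with a Bogovskii-type corrector of the form $\mathcal{B}[\rho^\theta-\langle\rho^\theta\rangle]$: in three dimensions the convective contribution $\rho\bb{u}\otimes\bb{u}:\nabla\mathcal{B}[\rho^\theta]$ is bounded using $\rho\in L^\infty_tL^\gamma_x$ together with $\bb{u}\in L^2_tL^6_x$, which forces $\gamma>3$, whereas in two dimensions the same H\"older decomposition closes with $\bb{u}\in L^2_tL^q_x$ for $q$ arbitrarily large, so that an analogous computation yields $\rho^{\gamma+\theta}\in L^1$ for some $\theta>0$ as soon as $\gamma>2$. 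The effective viscous flux identity, the oscillation defect measure, and the renormalized continuity equation then transfer verbatim from the moving-domain framework of \cite{Breit,trwa2}; the trace construction on $\Gamma^\eta(t)$ is unaffected because it depends only on the regularity of $\eta$, and in one structural dimension $H^2(\Gamma)\hookrightarrow C^{1,1/2}(\Gamma)$ so the beam is a Lipschitz graph by a large margin.

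The proofs of Theorem \ref{main2} and Theorem \ref{main3} transfer almost without change, since their driver is the contact inequality \eqref{ver:ineq}, whose structural form is dimension-agnostic. The lower bound on the pressure integral uses mass conservation and Jensen's inequality in the form $\int_{\Omega^\eta}\rho^\gamma\geq m^\gamma/(\int_\Gamma\eta)^{\gamma-1}$, which does not know about the dimension. The only dimension-sensitive terms are those on the right-hand side of \eqref{ver:ineq} that couple $\rho\bb{u}\otimes\bb{u}$ with $z\nabla\eta/\eta^2$; absorbing them by the dissipation and by the finite parts of the left-hand side is again exactly where $\gamma>3$ was used before, and the two-dimensional embedding now lets $\gamma>2$ suffice. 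For Theorem \ref{main3} the localization argument near the contact point $x_0\in\Gamma$ becomes in fact simpler, since $H^2(\Gamma)\hookrightarrow C^{1,1/2}(\Gamma)$ combined with $\eta\geq 0$ and $\eta(x_0)=0$ gives the pointwise bound $|\eta(t,x)|\leq C|x-x_0|^{3/2}$, which is stronger than its planar analog; this is the quantitative manifestation of the statement that the beam ``cannot move too much near the contact point'' used in the contradiction step.

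The main obstacle will be bookkeeping rather than new analysis: one has to verify that the replacement of $L^6$ by arbitrarily large $L^q$ propagates through every approximation layer of the existence scheme (Galerkin projection, artificial pressure $\delta\rho^\beta$, artificial viscosity $\varepsilon\Delta\rho$, and the subsequent limit passages), and that all pressure compactness arguments retain their quantitative form down to $\gamma>2$. Since on a fixed 2D domain the Feireisl--Lions theory famously reaches $\gamma>1$, and since the moving-domain machinery in \cite{Breit,trwa2} is built dimension-independently on top of such interior estimates, no conceptually new ingredient should be required.
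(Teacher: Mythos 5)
Your proposal is correct and follows essentially the same route the paper indicates for this corollary, namely that in the 2D/1D case the embedding $H^1(\Omega^\eta(t))\hookrightarrow L^q(\Omega^\eta(t))$ for all $q<\infty$ (with constants controlled via $\|\eta\|_{L^\infty_t H^2}$ as in Lemma \ref{imbed}) replaces the 3D restriction to $q<6$ and lowers every Hölder closure to $\gamma>2$. One small misattribution worth noting: in 3D the threshold $\gamma>3$ is driven by the contact-inequality bounds \eqref{bound1}--\eqref{bound2}, where $p=\tfrac{2\gamma}{\gamma-2}$ must be an admissible velocity exponent, not by the Bogovskii/local pressure estimate (which closes for much smaller $\gamma$); in the 2D/1D setting it is exactly these bounds that relax to $\gamma>2$, as you correctly identify later in your argument.
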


\section{Literature review and discussion}
The fluid-structure interaction problems have been widely studied in the mathematical literature for more than two decades. Their motivation lies in various applications ranging from industry, aerodynamics, biomedicine and so on (see \cite{book,book2} and references therein). The existence theory for fluid-plate/shell interaction including incompressible fluids has been so far well-developed, starting from the work of H.~Beirão da Veiga \cite{veiga} and Grandmont et al \cite{grandmont3,grandmont4}, continuing with the work of Lequerre \cite{Leq1},
Lengeler and R{\r u}{\v z}i{\v c}ka \cite{LeRu14}, Muha and {\v C}ani{\'c} \cite{BorSun,BorSun2}, Muha and  Schwarzacher \cite{BS}, and to more recent results concerning regularity and uniqueness of 2D/1D interaction system \cite{BreitSolo, BMSS, SchSu}, vectorial structures in 2D/1D case \cite{unrestricted,GS} and 3D/2D case \cite{BorSunTaw}, and inviscid incompressible fluid and plate interaction \cite{BKMT}. The problem of compressible viscous fluids and plate/shell interaction has been studied far less. It was started by Breit and Schwarzacher \cite{Breit}, followed by Mitra \cite{mitra}, Maity, Roy and Takahashi \cite{MRT}, Trifunovi\'{c} and Wang \cite{trwa2}, then involving heat conductivity in the fluid in \cite{BS2,MT}, and finally the problem where both fluid and plate conduct and exchange heat \cite{heatexchange}.

The existence theory has been for most limited by occurrence of contact. Indeed, since contact induces fluid domain degeneration and irregularities, most of solutions constructed in literature fail to "survive" it and usually the solution is studied up to the first moment when it happens. There have been two main approaches in dealing with this issue. First one is showing that contact will not occur and then obtaining a global solution. This was done in the context of incompressible viscous fluid and rigid body interaction in \cite{collisions2d,collisions3d} and then later for the interaction between incompressible viscous fluid and viscoelastic beam in \cite{global}. The second approach is allowing for contact to happen but with test functions vanishing in the contact region, thus losing information about the post-contact dynamics. This has been the case in fluid and rigid body interaction (see for example \cite{sarka} and the references therein) where this allows two outcomes -- sticking to boundary or going away from it \cite{star}. In the context of elastic structures and incompressible viscous fluids, first such result was in \cite{CGH} where the solution was obtained as a limit of global ones constructed in \cite{global} as viscosity in the beam vanishes. Later in \cite{MalteSrdjan}, a weak solution to the interaction between 3D viscoelastic body and a 3D compressible viscous fluid with allowed contact and pressure defects was constructed by penalization of interpenetration of the body and its contact with the rigid boundary. 

While it has been shown that contact occurs in some results concerning rigid bodies \cite{regularitycollisions,JNOR}, there were no results that were able to answer what happens afterwards since the fluid tends to behave irregularly near the contact, making it difficult to preserve any information. The inequality $\eqref{ver:ineq}$ is the first one that stores crucial information as it holds even when there is contact. It is reminiscent of variational inequalities in the contact problems for plates \cite{varineq}. First of all, it tells us about the boundedness of $\ln\eta$ which comes from the vertical fluid dissipation and kinematic coupling. This bound in a sense is very weak, but it allows a large degree of freedom for the occurrence of contact, especially for the initial data. In particular, it is also weaker than the one on $\frac1\eta$ obtained in the case of 2D incompressible viscous fluid interacting with 1D viscoelastic beam in \cite[Proposition 9]{global}. This is to be expected as here there is no viscoelasticity, the structure is 2D and the fluid is 3D and compressible, which all together drastically reduce the amount of regularity in our system. Second important information coming from the inequality $\eqref{ver:ineq}$ is about the total pressure exerted onto the plate, even in the region near the contact. Indeed, if $\rho$ is regular enough, then
\begin{eqnarray*}
    \int_{\Omega^\eta} \frac{p(\rho)}\eta = \int_{\Gamma^\eta} p(\rho) - \int_{\Omega^\eta} \partial_z p(\rho) \frac{z}\eta
\end{eqnarray*}
where the last term is then expressed through other fluid terms and then dealt with. Since the total bending $\int_\Gamma \Delta^2\eta$ vanishes due to periodic boundary conditions and since effects depending on fluid velocity dissipate through time, at large time we will have that $\rho,\eta$ are close to a constant state so
\begin{eqnarray*}
    \left(\frac{m}{|\Gamma| ~\eta}\right)^\gamma \sim p(\rho) \sim -F,
\end{eqnarray*}
where $m$ is the mass of the fluid. Thus, if $F$ is small then $\eta$ has to be large and there cannot be any contact. In other words, the pressure will keep pushing up the plate until the fluid is rare enough to balance out with the small force $F$ of course if $\int_\Gamma F< 0$, otherwise the plate will go up indefinitely. Finally, let us also point out that the new estimate on $\frac{\rho^\gamma}\eta$ in $L^1(Q_T^\eta)$ coming from the contact inequality $\eqref{ver:ineq}$ gives us certain better control of pressure near contact. In particular, it ensures that no concentrations (measures) occur throughout the convergence of approximate solutions as it was proven in Lemma $\ref{weak:conv:pr:delta}$, even though the fluid domain near contact can be very irregular. In a sense, it tells us that plate regularizes the pressure near contact.\\

Let us point out some observations about other directions:
\begin{enumerate}
    \item \textbf{Adiabatic constant} $\gamma>3$. This is by far the most restricting condition in the presented results and it fails to include most of the physically reasonable models which are in the range $\gamma \in (1, 5/3]$. While the detachment of contact essentially relies on the strength of the outer force or on the boundedness of the average density from below (i.e. on fixed mass), the other terms on the right-hand side of $\eqref{ver:ineq}$ cannot be controlled without enough integrability of $\rho$. Thus, lowering $\gamma$ remains an open problem;
    \item \textbf{Incompressible case}. While this case is easier to study in most of scenarios of fluid-structure interaction problems, the detachment of contact seems not to be one of them. The issue lies in the incompressible fluid pressure and its lack of a sign, regularity and its inability to do work ($p\nabla\cdot\bb{u}$ = 0). Moreover, the divergence-free condition on $\bb{u}$ makes it impossible to decouple vertical and horizontal fluid effects, so obtaining the contact inequality $\eqref{ver:ineq}$ seems to be out of reach, as this inequality gives us information only about the vertical fluid effects applied onto the plate. This can also be seen by including the Mach number in our fluid system, which would then appear in the numerator in the right-hand side of $\eqref{blowup}$, giving us only non-negativity of $\eta$ as Mach number goes to zero. 
    \item \textbf{Bounded plate domain}. Let us point out that it is possible to prove Theorem $\ref{main1}$ and Theorem $\ref{main3}$ in the case where $\Gamma$ is a bounded Lipschitz domain and the plate is fixed on the boundary $\partial\Gamma$ at the height $H>0$, although with some additional technical steps in the existence proof in order to deal with the domain corners, i.e. regularization of domain should be performed. In that case, the contact inequality is then localized away from the boundary in the from of $\eqref{eq:contact}$, since the plate cannot have contact near the boundary $\partial\Gamma$ due to uniform estimates. However, it seems that proving Theorem $\ref{main2}$ in this case is not obvious, as the total bending on a bounded domain does not vanish, i.e. $\int_\Gamma \Delta^2 \eta = \int_{\partial\Gamma} \nabla \Delta \eta \cdot \nu \neq 0$, where $\nu$ is the outer normal vector on $\partial\Gamma$. As time goes to infinity, total bending will simply balance out the total pressure and outer force, so no new information is obtained.
    \item \textbf{Source forces}. (a) Let us first point out that the regularity of $F\in L^1(0,T; L^2(\Gamma))$ is not the only option for the existence, but it is the simplest one. Indeed since $\partial_t \eta \in L^2(0,T; H^s(\Gamma))$ by the trace regularity given in Lemma $\ref{trace:lemma}$, it is also enough to assume that $F\in L^2(0,T; H^{-s}(\Gamma))$ for any $s\in(0,1/2)$. In fact, any option that allows us to control the term $\int_0^t\int_\Gamma F\partial_t \eta$ in the energy inequality is enough, as it can be seen in $\eqref{l1:case}$ for example. \\
    (b) Another source force acting onto the fluid of the form $\rho \bb{G}$ can be added, with $\bb{G}$ satisfying certain properties, however it was not included for the simplicity of the presentation.
\end{enumerate}

\section{Construction of global weak solutions -- proof of Theorem \ref{main1}}
The multi-layered construction in this section based on decoupling, domain extension and two-side penalization comes from \cite{heatexchange}, so we omit some of the technical details and present the most important steps. The decoupling approach (Lie operator splitting) combined with time-discretization was first used in \cite{BorSun} where a 2D/1D incompressible viscous fluid and (visco)elastic beam interaction model was studied. It was later extended to a hybrid decoupling approximation scheme in \cite{trwa}, where a nonlinear plate model was dealt with by using a time-continuous equation for the plate and stationary for the fluid, and then to fully time-continuous time-marching decoupling scheme which penalizes the kinematic coupling from both equations \cite{trwa2}, to study a nonlinear thermoelastic plate and a compressible viscous fluid interaction problem. Finally, this method was then combined with a domain extension approach developed in \cite{commoving} to study an interaction problem between a heat-conducing compressible viscous fluid and a thermoelastic shell with heat exchange in \cite{heatexchange}.

\subsection{The approximate problem}
Fix $T>0$ and let $\delta,\varepsilon\in (0,1/2)$ be approximation parameters. Moreover, let $N\in \mathbb{N}$ and $\Delta t := T/N$. The time interval is split into $N$ equal intervals of length $\Delta t$ and a time-marching approach is utilized (there is no discretization in time). The fluid domain is extended to a larger domain (see figure \ref{fig1}) $\Omega_M:=\Gamma\times (0,M)$ where $M>0$ is large enough (which is calculated later in $\eqref{eta:max:est}$) so that the structure will remain at most $M/2$ on $(0,T)$, while we denote $Q_T^M:= (0,T)\times \Omega^M$. The kinematic coupling is penalized on the interface, meaning that the fluid can pass through the elastic boundary. The viscosity $\mathbb{S}(\nabla \bb{u})$ is approximated with $\chi_\eta^\varepsilon\mathbb{S}(\nabla\bb{u})$ where $\varepsilon\leq \chi_\eta^\varepsilon\leq 1$ is smooth and
	\begin{eqnarray}
		\chi_\eta^\varepsilon(t,x,y,z):=
        \begin{cases}
         1,& 0<z<\eta(t,x,y), \\
         \varepsilon,& \eta(t,x,y)+\varepsilon <z<M.
		\end{cases} \label{chi}
	\end{eqnarray}
Additional viscoelastic term $ -\varepsilon\partial_t\Delta \eta$ is added to the plate equation in order to ensure that density will vanish outside of $Q_T^\eta$ once we pass to the limit $\Delta t\to 0$. We approximate the initial displacement as
	\begin{eqnarray*}
        \eta_{0,\delta}:=\eta_0+\delta\geq \delta
	\end{eqnarray*}
and consequently $\Omega^{\eta_{0,\delta}}:=\{(x,y,z): (x,y)\in \Gamma, 0<z<\eta_{0,\delta}(x,y) \}$. We introduce the approximate fluid initial data:
	\begin{eqnarray*}
		&&\rho_{0,\delta}\geq 0,~~ \rho_{0,\delta}\not\equiv 0,~~ \rho_{0,\delta}|_{\Omega^M\setminus \Omega^{\eta_{0,\delta}}}=0, ~~\int_{\Omega_M}\rho_{0,\delta}^{\gamma} \leq c,\\ &&\rho_{0,\delta}\to \rho_0 \text{ in }L^{\gamma}(\Omega_M)~~ \text{ and } ~~ |\{\rho_{0,\delta}<\rho_0 \}|\to 0 \text{ as } \delta \to 0,\\
		&&(\rho\bb{u})_{0,\delta}=\begin{cases} (\rho\bb{u})_0, &\text{ if }\rho_{0,\delta}\geq \rho_0, \\
		0, &\text{ otherwise}\end{cases},\quad \int_{\Omega_M} \frac{1}{\rho_{0,\delta}}|(\rho\bb{u})_{0,\delta}|^2 \leq c.
	\end{eqnarray*}
Finally denote the translation in time by $-\Delta t$ as
	\begin{eqnarray*}
		T_{\Delta t}f(t):=
        \begin{cases}
        f(t-\Delta t),& \quad t\in [(n-1)\Delta t, n\Delta t], ~n\geq 1, \\
        f(0),& \quad t\in [0, \Delta t].
        \end{cases}
	\end{eqnarray*}
\begin{figure}[h!]
		\centering
		\includegraphics[width=0.5\linewidth]{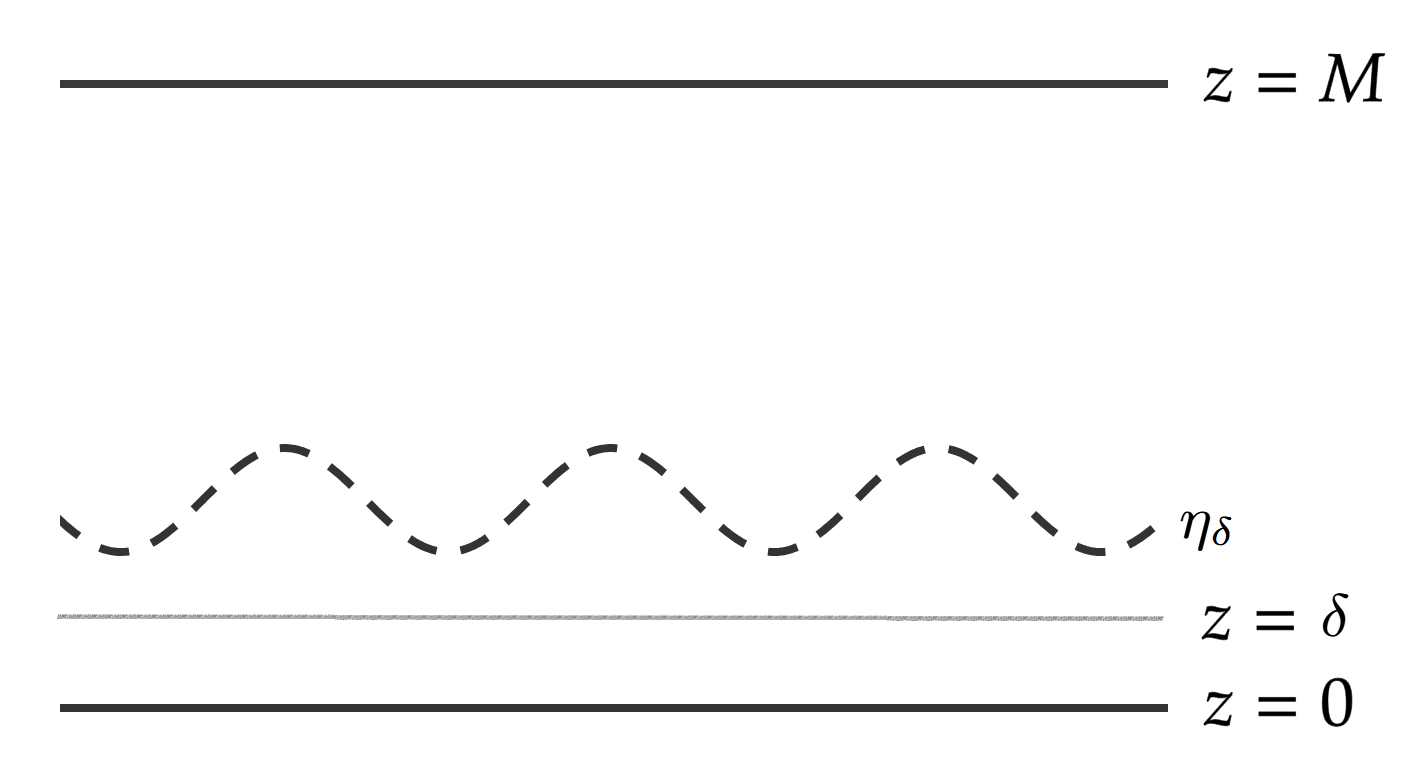}
		\caption{Fluid domain for the approximate solution represented in 2D for simplicity. The graph of $\eta^{n+1}$ is dashed to emphasize that fluid can pass through, as long as $\Delta t>0$. This allows us to construct the solution to (FSP) on a fixed domain, thus avoiding major difficulties in construction which are usually present in fluid-structure interaction problems due to change of the fluid domain in time. Note that there is also an artificial obstacle $z=\delta$ in order to ensure that $\eta^{n+1}\geq \delta$.}
		\label{fig1}
\end{figure}
	
\subsubsection{Structure sub-problem (SSP)}
By induction on $n\geq 0$, assume that:
\begin{itemize}
	\item[] \textbf{Case} $n=0$: $\eta^0(0):=\eta_{0,\delta}$,~ $\tilde{v}^0: = v_0$,~ and
		\begin{eqnarray*}
            \bb{v}^0(t): = v_0\bb{e}_z, \quad \text{for }t\in[-\Delta t,0],
		\end{eqnarray*}
    with
    \begin{eqnarray*}
        E_{str,\varepsilon}^0:=\frac{1-\varepsilon}{2} \int_{\Gamma} |v_0|^2 + \frac{1}{2} \int_{\Gamma}|\Delta \eta_{0,\delta}|^2 ;
    \end{eqnarray*}
\item[] \textbf{Case} $n\geq 1$: the solution $\eta^n$ of $(SSP)$ and the solution $(\rho^n,\bb{u}^n)$ of $(FSP)$ (defined below) are already obtained, and
\begin{eqnarray}  
    \begin{cases}
    &E_{str,\varepsilon}^n:= \lim_{m\to \infty} \left(  \frac{1-\varepsilon}2\int_{\Gamma}|\partial_t \eta^{n}|^2(t_m)
    +\frac12\int_\Gamma |\Delta\eta^n|^2(t_m) \right), \\
    &\int_\Gamma\tilde{v}^n\psi:= \lim_{m\to \infty} \int_\Gamma \partial_t\eta(t_m)\psi \quad \text{ for all } \psi \in L^2(\Gamma), \end{cases} \label{ass:ssp}
\end{eqnarray}
with $((n-1)\Delta t, n\Delta t) \ni t_m\to n\Delta t$ as $m\to \infty$ which are a sequence of Lebesque points of the integrals in the limits chosen so that the limits converge.\\
\end{itemize}

\noindent
Find $\eta^{n+1}$ such that:
\begin{enumerate}
 \item $\eta^{n+1}(n \Delta t) =\eta^{n}(n \Delta t)$;
 \item $\eta^{n+1}\in L^\infty(n\Delta t, (n+1)\Delta t; H^2(\Gamma))\cap W^{1,\infty}(n\Delta t, (n+1)\Delta t; L^2(\Gamma)) \\ \cap H^1(n\Delta t, (n+1)\Delta t; H^1(\Gamma))$;
\item The following plate equation
\begin{eqnarray}
     &&-(1-\varepsilon)\int_{n\Delta t}^{(n+1)\Delta t}\int_{\Gamma}\partial_t \eta^{n+1}\partial_t \psi   +\varepsilon\int_{n\Delta t}^{(n+1)\Delta t}\int_{\Gamma}\ddfrac{\partial_t \eta^{n+1} - T_{\Delta t} \bb{v}^{n}\cdot \bb{e}_z}{\Delta t}\psi \nonumber\\ 
     &&\quad+\varepsilon\int_{n\Delta t}^{(n+1)\Delta t}\int_{\Gamma} \partial_t \nabla \eta^{n+1}\cdot\nabla\psi +\int_{n\Delta t}^{(n+1)\Delta t}\int_{\Gamma}\Delta \eta^{n+1} \Delta \psi \nonumber\\ 
     &&= \int_{n\Delta t}^{(n+1)\Delta t}\int_{\Gamma} F_{con,\delta}^{n+1} - \int_{n\Delta t}^{(n+1)\Delta t}\int_{\Gamma} F^{n+1} \psi  + (1-\varepsilon)\int_\Gamma \tilde{v}^{n}\psi(n\Delta t) \label{plateeqSSP}
\end{eqnarray}
holds for all ${\psi}\in C_c^\infty([n\Delta t,(n+1)\Delta t)\times \Gamma)$, where $F_{con,\delta}^{n+1} \in \mathcal{M}^+([n\Delta t, (n+1)\Delta t]\times \Gamma)$ is the contact force keeping the structure $\eta^{n+1} \geq \delta>0$;
\item The following energy inequality holds for a.a. $t\in (n\Delta t, (n+1)\Delta t)$ 
\begin{eqnarray} 
     && \frac{1-\varepsilon}{2}\int_{\Gamma}|\partial_t \eta^{n+1}|^2(t) +\frac{1}2\int_{\Gamma}|\Delta  \eta^{n+1}|^2(t)\nonumber \\
     &&\quad+\frac\varepsilon{2\Delta t}\int_{n\Delta t}^t\int_{\Gamma}|\partial_t \eta^{n+1} - T_{\Delta t}\bb{v}^{n}\cdot \bb{e}_z|^2+ \frac\varepsilon{2\Delta t}\int_{n\Delta t}^t\int_{\Gamma}|\partial_t \eta^{n+1} |^2 \nonumber\\
     &&\leq \frac\varepsilon{2\Delta t}\int_{n\Delta }^t\int_{\Gamma} |T_{\Delta t}\bb{v}^{n}\cdot \bb{e}_z|^2 +E_{str,\varepsilon}^n.\label{en:ineq:ssp}
\end{eqnarray}
\end{enumerate}

\bigskip

\subsubsection{Fluid sub-problem (FSP)}
By induction on $n\geq 0$, assume that:
	\begin{itemize}
		\item[] \textbf{Case} $n=0$: $\rho^0(0):=\rho_{0,\delta}$,~ $(\rho\bb{u})^0(0): = (\rho\bb{u})_{0,\delta}$ with
        \begin{eqnarray*}
            E_{fl}^0:=\displaystyle{\int_{\Omega^M} \left( \frac{1}{2\rho_{0,\delta}} |(\rho\bb{u})_{0,\delta}|^2 + \frac{\rho_{0,\delta}^\gamma}{\gamma-1}   \right)};
        \end{eqnarray*}
		\item[] \textbf{Case} $n\geq 1$: the solution $(\rho^n,\bb{u}^n)$ of $(FSP)$ and the solution $\eta^{n+1}$ of $(SSP)$ is already obtained, and denote
    \begin{eqnarray}
        E_{fl}^n := \lim_{m\to \infty} \left(\frac12\int_{\Omega^M} \rho^n |\bb{u}^n|^2(t_m) +  \int_{\Omega^M} \frac{(\rho^n)^\gamma}{\gamma-1}(t_m) \right) \label{ass:fsp}
    \end{eqnarray}
with $((n-1)\Delta t, n\Delta t) \ni t_m\to n\Delta t$ as $m\to \infty$ is a sequence of Lebesque points of the integrals in the limit chosen so that the limit converges.\\
\end{itemize}
    
\noindent
Find $(\rho^{n+1}, \bb{u}^{n+1})$ such that:
	\begin{enumerate}
    \item $\rho^{n+1}(n\Delta t)= \rho^{n}(n\Delta t)$ and $(\rho \bb{u})^{n+1}(n\Delta t)=(\rho \bb{u})^{n}(n\Delta t)$ in weakly continuous sense in time;
    \item $\rho^{n+1} \in L^\infty(n\Delta t, (n+1)\Delta t; L^\gamma(\Omega^M))$, $\bb{u}^{n+1}\in L^2(n\Delta t, (n+1)\Delta t; H_0^1(\Omega^M))$, $\rho^{n+1}|\bb{u}^{n+1}|^2\in L^\infty(n\Delta t, (n+1)\Delta t; L^1(\Omega^M))$;
	\item The renormalized continuity equation
		\begin{align}
        &\int_{n\Delta t}^{(n+1)\Delta t} \int_{\Omega^M} \rho^{n+1} B(\rho^{n+1})( \partial_t \varphi +\bb{u}^{n+1}\cdot \nabla \varphi) \nonumber \\
         &= \int_{n\Delta t}^{(n+1)\Delta t} \int_{\Omega^M} b(\rho^{n+1})(\nabla\cdot \bb{u}^{n+1}) \varphi - \int_{\Omega^M} \rho^{n+1} B(\rho^{n+1}) \varphi \Big|_{n\Delta t}^{(n+1)\Delta t} \label{app:cont}
	   \end{align}
holds for all $\varphi \in C^\infty([n\Delta t,(n+1)\Delta t]\times \mathbb{R}^3)$ and any $b\in L^{\infty}(0,\infty) \cap C[0,\infty)$ such that $b(0)=0$ with $B(x)=B(1)+\int_1^x \frac{b(z)}{z^2}dz$;
\item The following momentum equation
	\begin{align}
        &\int_{n\Delta t}^{(n+1)\Delta t} \int_{\Omega^M} \rho^{n+1} \bb{u}^{n+1} \cdot\partial_t \boldsymbol\varphi + \int_{n\Delta t}^{(n+1)\Delta t} \int_{\Omega^M} (\rho^{n+1} \bb{u}^{n+1} \otimes \bb{u}^{n+1}):\nabla\boldsymbol\varphi\nonumber\\
        &  +\int_{n\Delta t}^{(n+1)\Delta t} \int_{\Omega^M} (\rho^{n+1})^\gamma (\nabla \cdot \boldsymbol\varphi)-  \int_{n\Delta t}^{(n+1)\Delta t} \int_{\Omega^M}   \mathbb{S}_{\eta,\varepsilon}^{n+1}(\nabla\bb{u}^{n+1}): \nabla \boldsymbol\varphi\nonumber\\
         & - \varepsilon\int_{n\Delta t}^{(n+1)\Delta t}  \int_\Gamma\ddfrac{\bb{v}^{n+1}-\partial_t \eta^{n+1}\bb{e}_z}{\Delta t} \boldsymbol\psi   = \int_{\Omega^M}\rho^{n+1}\bb{u}^{n+1}\cdot\boldsymbol\varphi \Big|_{n\Delta t}^{(n+1)\Delta t} \label{momeqFSP}
	\end{align}
holds for all $\boldsymbol\varphi \in C_c^\infty([n\Delta t,(n+1)\Delta t]\times \Omega^M)$, where $\boldsymbol\psi:=\gamma_{|\Gamma^{\eta^{n+1}}}\boldsymbol\varphi$ on $G_T$ and
    \begin{eqnarray*}
        &&\bb{v}^{n+1}=\gamma_{|\Gamma^{\eta^{n+1}}}\bb{u}^{n+1}, \quad \mathbb{S}_{\eta,\varepsilon}^{n+1}(\nabla \bb{u}):=\chi_{\eta^{n+1}}^\varepsilon\left(\mu \left( \nabla \bb{u} + \nabla^\tau \bb{u}-\frac{2}{3} \nabla \cdot \bb{u} \right) + \lambda \nabla \cdot \bb{u} \right),
	\end{eqnarray*}
with $\chi_{\eta^{n+1}}^\varepsilon$ being defined in $\eqref{chi}$; 
\item The following energy inequality holds for a.a. $t\in (n\Delta t, (n+1)\Delta t)$:
	\begin{eqnarray}
        &&\frac12\int_{\Omega^M} \rho^{n+1}|\bb{u}^{n+1}|^2(t) +  \int_{\Omega^M} \frac{(\rho^{n+1})^\gamma}{\gamma-1}(t)+\int_{n\Delta t}^t\int_{\Omega^M} \mathbb{S}_{\eta,\varepsilon}^{n+1} (\nabla\bb{u}^{n+1}):\nabla\bb{u}^{n+1}\nonumber\\
        &&\quad+\frac\varepsilon{2\Delta t} \int_{n\Delta t}^t \int_{\Gamma}|\bb{v}^{n+1}|^2+\frac\varepsilon{2\Delta t}  \int_{n\Delta t}^t \int_{\Gamma}|\bb{v}^{n+1}-\partial_t \eta^{n+1}\bb{e}_z|^2 \nonumber \\
        &&\leq \int_{n\Delta t}^{t}\int_{\Gamma} F \partial_t \eta^{n+1} +E_{fl}^n+\frac\varepsilon{2\Delta t} \int_{n\Delta t}^t\int_{\Gamma} |\partial_t \eta^{n+1}|^2. \nonumber \\ \label{en:ineq:fsp}
	\end{eqnarray}
\end{enumerate}
\subsection{Solving the approximate problems}
To solve (SSP), first fix $t_m$ as in $\eqref{ass:ssp}$
and set $\eta_m^{n+1}(n \Delta t) =\eta^{n}(t_m)$ and replace $\int_{\Gamma}\tilde{v}^n \psi(n\Delta t)$ with $\int_{\Gamma}\partial_t \eta^n(t_m) \psi(n\Delta t)$ in $\eqref{plateeqSSP}$. Now, the solution $\eta_m^{n+1}$ can be obtained as a limit of solutions to the same problem where $F_{con,\delta,m}^{n+1}$ is replaced with  $\frac1\kappa\chi_{\{\eta_m^{n+1}<\delta\}} (\partial_t\eta_m^{n+1})^-$ for $\kappa>0$. This penalizing force acts only when $\eta_m^{n+1}<\delta$ and when the velocity is negative. Thus, it has a positive sign (it pushes up) and it is fully dissipative, so the inequality $\eqref{en:ineq:ssp}$ can be easily obtained by simply multiplying the equation by $\partial_t \eta_m^{n+1}$ and integrating over $(n\Delta t,t)\times \Gamma$. By passing to the limit $\kappa\to 0$ and then $m\to\infty$, we obtain the solution to $\eqref{plateeqSSP}$ (see \cite[Section 3]{contactwave} for more details).

\bigskip

In order to solve (FSP), let $t_m$ be as in $\eqref{ass:fsp}$ and set $\rho_m^{n+1}(n\Delta t)= \rho^{n}(t_m)$ and $(\rho \bb{u})_m^{n+1}(n\Delta t)=(\rho \bb{u})^{n}(t_m)$. We can now use the standard theory for compressible Navier-Stokes, since the problem is defined on a fixed domain and the only difference is the penalization term which is a lower order term (the approximation of the viscous term does not affect the proof in any way since it does not violate parabolicity). The fluid velocity is spanned in a finite Galerkin basis with $k\in \mathbb{N}$ functions, the continuity equation is damped with $-a\Delta\rho$ for $a>0$, and an additional term $a \nabla\rho \cdot \nabla \bb{u}$ is added to the momentum equation to correct the energy. Finally, the pressure is regularized by adding a term $b\rho^\beta$ for $\beta\geq 4$. The solution to such problem is obtained by Schauder’s fixed point theorem. Then, passing to the limit $k\to \infty$, then $a\to 0$, $b\to 0$ and finally $m\to\infty$, the solution of (FSP) is obtained. For more details for the standard theory, see \cite[Chapter 3]{FeNobook} or \cite[Section 7]{forbiddenbook}.

\subsection{Coupled back problem and uniform estimates}
Denote 
\begin{eqnarray*}
    f(t):=f^{n}(t) \quad \text{ for } t\in [(n-1)\Delta t,n\Delta t).
\end{eqnarray*}
Fix $\psi \in C_c^\infty([0,T)\times \Gamma)$ and $\boldsymbol\varphi \in C_c^\infty([0,T)\times \Omega^M)$ such that $\gamma_{|\Gamma^{\eta}}\boldsymbol\varphi = \psi \bb{e}_z$. Now, in $\eqref{plateeqSSP}$ we test $\chi_{\{n\Delta t,t_m\}}\psi$ by the density argument, where $t_m\in (n\Delta t, (n+1)\Delta t)$ is as in $\eqref{ass:ssp}$, and then pass to the limit $m\to \infty$ to obtain
\begin{eqnarray}
    &&-(1-\varepsilon)\int_{n\Delta t}^{(n+1)\Delta t}\int_{\Gamma}\partial_t \eta^{n+1}\partial_t \psi   +\varepsilon\int_{n\Delta t}^{(n+1)\Delta t}\int_{\Gamma}\ddfrac{\partial_t \eta^{n+1} - T_{\Delta t} \bb{v}^{n}\cdot \bb{e}_z}{\Delta t}\psi \nonumber\\ 
     &&\quad +\varepsilon\int_{n\Delta t}^{(n+1)\Delta t}\int_{\Gamma} \partial_t \nabla \eta^{n+1}\cdot\nabla\psi +\int_{n\Delta t}^{(n+1)\Delta t}\int_{\Gamma}\Delta \eta^{n+1} \Delta \psi \nonumber\\ 
     &&= \int_{n\Delta t}^{(n+1)\Delta t}\int_{\Gamma} F_{con,\delta}^{n+1} - \int_{n\Delta t}^{(n+1)\Delta t}\int_{\Gamma} F^{n+1} \psi \nonumber\\
     &&\quad - (1-\varepsilon)\left(\int_\Gamma \tilde{v}^{n+1}\psi((n+1)\Delta t)-\int_\Gamma \tilde{v}^{n}\psi(n\Delta t)\right), \label{before:coupled}
\end{eqnarray}
for any $1\leq n\leq N-1$. Now, summing up $\eqref{before:coupled}$ and $\eqref{momeqFSP}$ and then summing up over $n=1,2,...,N-2$ and then summing up with $\eqref{plateeqSSP}$ and $\eqref{momeqFSP}$ for $n=N-1$ (and doing the same for the continuity equation $\eqref{app:cont}$), the approximate solutions $(\rho,\bb{u},\eta)$ satisfy:
	\begin{enumerate}
		\item The renormalized continuity equation
		\begin{eqnarray}
         \int_{Q_T^M} \rho B(\rho)( \partial_t \varphi +\bb{u}\cdot \nabla \varphi) =\int_{Q_T^M} b(\rho)(\nabla\cdot \bb{u}) \varphi -\int_{\Omega^M}\rho_{0,\delta}B(\rho_{0,\delta}) \varphi(0)   \label{coupledback:cont}
     \end{eqnarray}
holds for all functions $\varphi \in C_c^\infty([0,T)\times \mathbb{R}^3)$ and any $b\in L^\infty (0,\infty) \cap C[0,\infty)$ such that $b(0)=0$ with $B(\rho)$ being any primitive function to $\frac{b(\rho)}{\rho^2}$; 
\item The coupled momentum equation 
	\begin{eqnarray}
        &&\int_{Q_T^M} \rho \bb{u} \cdot\partial_t \boldsymbol\varphi + \int_{Q_T^M}(\rho \bb{u} \otimes \bb{u}):\nabla\boldsymbol\varphi +\int_{Q_T^M} \rho^\gamma (\nabla \cdot \boldsymbol\varphi)-  \int_{Q_T^M} \mathbb{S}_{\eta,\varepsilon}( \nabla\bb{u}): \nabla \boldsymbol\varphi \nonumber\\
        &&\quad+(1-\varepsilon) \int_{G_T} \partial_t \eta\partial_t \psi-\varepsilon\int_{G_T}\ddfrac{(\bb{v} - T_{\Delta t} \bb{v})\cdot \bb{e}_z}{\Delta t}\psi  - \int_{G_T}\Delta\eta \Delta\psi - \varepsilon\int_{G_T} \partial_t \nabla \eta \cdot \nabla \psi  \nonumber\\
        &&\quad+\int_{G_T} F_{con,\delta}\psi +\int_{G_T} F\psi \nonumber\\
        &&= -\int_{\Omega^M}(\rho\bb{u})_{0,\delta}\cdot\boldsymbol\varphi(0)- (1-\varepsilon)\int_\Gamma v_0 \psi(0)  \label{coupledback:mom}
    \end{eqnarray}
holds for all $\boldsymbol\varphi\in C_c^\infty([0,T)\times\Omega^M)$ and $\psi \in C_c^\infty([0,T)\times\Gamma)$ such that $\gamma_{|\Gamma^\eta}\boldsymbol\varphi = \psi \bb{e}_z$.
\end{enumerate}
In order to obtain the energy inequality, let $t_m\in (n\Delta t, (n+1)\Delta t)$ be as in $\eqref{ass:ssp}$. Evaluating $\eqref{en:ineq:ssp}$ at $t_m$ and passing to the limit $m\to \infty$ gives us
 \begin{eqnarray} 
    &&E_{str,\varepsilon}^{n+1}+\frac\varepsilon{2\Delta t}\int_{n\Delta t}^{(n+1)\Delta t}\int_{\Gamma}|\partial_t \eta - T_{\Delta t}\bb{v}\cdot \bb{e}_z|^2+ \frac\varepsilon{2\Delta t}\int_{n\Delta t}^{(n+1)\Delta t}\int_{\Gamma}|\partial_t \eta |^2
     \nonumber\\
      &&\leq \frac\varepsilon{2\Delta t}\int_{n\Delta t}^{(n+1)\Delta t}\int_{\Gamma} |T_{\Delta t}\bb{v}\cdot \bb{e}_z|^2 +E_{str,\varepsilon}^n,\label{coupledback:ssp:en}
	\end{eqnarray}
while choosing $t_m\in (n\Delta t, (n+1)\Delta t)$ as in $\eqref{ass:fsp}$ and passing to the limit $m\to \infty$ in $\eqref{en:ineq:fsp}$ gives us
    \begin{eqnarray}
		&& E_{fl}^{n+1} +\int_{n\Delta t}^{(n+1)\Delta t}\int_{\Omega^M} \mathbb{S}_{\eta,\varepsilon} (\nabla\bb{u}):\nabla\bb{u}\nonumber  \\
        &&\quad+\frac\varepsilon{2\Delta t} \int_{n\Delta t}^{(n+1)\Delta t}\int_{\Gamma}|\bb{v}|^2+\frac\varepsilon{2\Delta t}  \int_{n\Delta t}^{(n+1)\Delta t}\int_{\Gamma}|\bb{v}-\partial_t \eta\bb{e}_z|^2 \nonumber \\
		&&\leq \int_{n\Delta t}^{(n+1)\Delta t}\int_{\Gamma} F \partial_t \eta +E_{fl}^n+\frac\varepsilon{2\Delta t} \int_{n\Delta t}^{(n+1)\Delta t}\int_{\Gamma} |\partial_t \eta|^2(n\Delta t). \label{coupledback:fsp:en}
	\end{eqnarray}
Then, for any $m\in \{0,1,...,N-1\}$ and a.a. $t\in (m\Delta t, (m+1)\Delta t]$, we can sum up $\eqref{coupledback:ssp:en}$ and $\eqref{coupledback:fsp:en}$ and then sum over $n=0,1,...,m$. To the resulting inequality, we then add $\eqref{en:ineq:ssp}$ and $\eqref{en:ineq:fsp}$ evaluated at $t$, which in total by telescoping give us following energy inequality is satisfied:
	\begin{eqnarray}
        &&\frac12\int_{\Omega^M} \rho|\bb{u}|^2(t) +  \int_{\Omega^M} \frac{\rho^\gamma}{\gamma-1}(t)\nonumber\\
        &&\quad+\int_0^t\int_{\Omega^M} \mathbb{S}_{\eta,\varepsilon} (\nabla\bb{u}):\nabla\bb{u}+\frac\varepsilon{2\Delta t} \int_{m\Delta t}^t\int_{\Gamma}|\bb{v}|^2+\frac\varepsilon{2\Delta t}  \int_0^t\int_{\Gamma}|\bb{v}-\partial_t \eta \bb{e}_z|^2  \nonumber \\
        &&\quad+\frac\varepsilon{2\Delta t}\int_0^t\int_{\Gamma}|\partial_t \eta - T_{\Delta t}\bb{v}\cdot \bb{e}_z|^2  + \frac{1-\varepsilon}2\int_{\Gamma}|\partial_t \eta|^2(t)+\frac12\int_{\Gamma}|\Delta\eta|^2(t) + \varepsilon \int_0^t \int_\Gamma |\partial_t \nabla \eta|^2 \nonumber\\
        &&\leq  \int_0^t\int_{G_T} F\partial_t \eta+ \displaystyle{\int_{\Omega^M} \left( \frac{1}{2\rho_{0,\delta}} |(\rho\bb{u})_{0,\delta}|^2 + \frac{\rho_{0,\delta}^\gamma}{\gamma-1}   \right)} + \int_{\Gamma} \left(\frac{1-\varepsilon}{2}|v_0|^2 + \frac{1}{2} |\Delta \eta_{0,\delta}|^2  \right) .\nonumber \\
         && \label{coupled:en:app}
	\end{eqnarray}
In order to extract uniform estimates from the above inequality, we need to control the following term
	\begin{eqnarray*}
		\int_0^t \int_\Gamma F \partial_t \eta \leq \|F\|_{L^1(0,T; L^2(\Gamma))} \|\partial_t \eta \|_{L^\infty(0,T; L^2(\Gamma))} \leq 2\|F\|_{L^1(0,T; L^2(\Gamma))}^2 + \frac18 \|\partial_t \eta \|_{L^\infty(0,T; L^2(\Gamma))}^2, 
	\end{eqnarray*}
so taking supremum over $t$ in $\eqref{coupled:en:app}$ we obtain that $\|\partial_t \eta \|_{L^\infty(0,T; L^2(\Gamma))} \leq C$ and consequently the right-hand side of $\eqref{coupled:en:app}$ is now uniformly bounded. Therefore, we have the following uniform estimates:
	\begin{eqnarray}
		&&\esssup_{t\in (0,T)}  \left(\int_{\Omega^M} \rho|\bb{u}|^2(t) +  \int_{\Omega^M} \rho^\gamma(t) \right) \leq C, \label{uni1}\\
		&&\int_{Q_T^M} \mathbb{S}_{\eta,\varepsilon} (\nabla\bb{u}):\nabla\bb{u} + \varepsilon \int_{G_T} |\partial_t \nabla \eta|^2\leq C, \label{uni2}\\
		&&\esssup_{t\in (0,T)}  \left( \int_{\Gamma}|\partial_t \eta|^2(t)+
		\int_{\Gamma}|\Delta\eta|^2(t) \right) \leq C,\label{uni3} \\
		&& \int_{G_T}|\bb{v}-\partial_t \eta \bb{e}_z|^2 \leq C\Delta t. \label{kin:coup:dt}
	\end{eqnarray}
This in particular implies
	\begin{eqnarray}
		\|\bb{u}\|_{L^2(0,T; L^6(\Omega^M))}\leq C\|\bb{u}\|_{L^2(0,T; H^1(\Omega^M))} \leq C(\varepsilon). \label{bbu:eps}
	\end{eqnarray}
In order to estimate the maximum of $\eta$, let us first note that
	\begin{eqnarray*}
		|\eta(t,x)|^2 \leq 2|\eta_{0,\delta}(x)|^2 + 2\left|\int_0^t \partial_t \eta(\tau,x)d\tau\right|^2 \leq 2|\eta_{0,\delta}(x)|^2 + 2t\int_0^t |\partial_t \eta(\tau,x)|^2d\tau,
	\end{eqnarray*}
so integrating over $\Gamma$ and taking the supremum over $t$ gives us
	\begin{eqnarray*}
		&&\|\eta\|_{L^\infty(0,T; L^2(\Gamma))}^2 \leq 2 \|\eta_{0,\delta}\|_{L^2(\Gamma)}^2 + 2T \|\partial_t \eta\|_{L^2(0,T; L^2(\Gamma))}^2\\
		&&\leq 2 \|\eta_{0,\delta}\|_{L^2(\Gamma)}^2 + 2T^2 \|\partial_t \eta\|_{L^\infty(0,T; L^2(\Gamma))}^2 \\
		&&\leq   C(1+T^2).
	\end{eqnarray*}
Since $\eta$ is space-periodic, one has
	\begin{eqnarray}
		\|\eta\|_{L^\infty((0,T)\times \Gamma)} \leq C(\|\eta\|_{L^\infty(0,T; L^2(\Gamma))}+\|\Delta\eta\|_{L^\infty(0,T; L^2(\Gamma))})\leq C(1+T). \label{eta:max:est}
	\end{eqnarray}
Therefore, the constant $M$ which is the height of the extended domain can a priori be chosen for a fixed $T>0$ so that, say, $M/2\geq \|\eta\|_{L^\infty((0,T)\times \Gamma)}$.

\subsection{Passing to the limit $\Delta t\to 0$}
Let us denote the solution obtained in the previous section as $(\rho_{\Delta t},\bb{u}_{\Delta t},\eta_{\Delta t})$ which satisfy $\eqref{coupledback:cont},\eqref{coupledback:mom},\eqref{coupled:en:app}$ and denote the following weak limits of a converging subsequence as $\Delta t\to 0$:
\begin{eqnarray*}
    &\rho_{\Delta t} \rightharpoonup \rho &\quad \text{ weakly* in }\quad L^\infty(0,T; L^\gamma(\Omega^M)), \\
    &\bb{u}_{\Delta t} \rightharpoonup \bb{u}&\quad \text{ weakly in }\quad L^2(0,T; H_0^1(\Omega^M)), \\
    &\eta_{\Delta t} \rightharpoonup \eta& \quad \text{ weakly* in }\quad W^{1,\infty}(0,T; L^2(\Gamma))\cap L^\infty(0,T; H^2(\Gamma)).
\end{eqnarray*}
First, let us point out that the last convergence implies
    \begin{eqnarray}
         \eta_{\Delta t} \to \eta, \quad \text{ in } C^{0,\alpha}([0,T]\times \Gamma), \label{eta:deltat}
    \end{eqnarray}
for any $\alpha\in (0,1/3)$, by \cite[Theorem 5.2]{AmannGlasnik}. For all means and purposes, this implies a good enough convergence of the fluid domain throughout all the limits. \\
    
Now, due to estimate $\eqref{kin:coup:dt}$, we obtain that $\partial_t \eta\bb{e}_z = \bb{v}=\gamma_{|\Gamma^\eta}\bb{u}$, so the kinematic coupling is satisfied and consequently
	\begin{eqnarray*}
		&&(1-\varepsilon)\int_{G_T} \partial_t\eta_{\Delta t}\partial_t \psi -\varepsilon\int_{G_T}\frac{(\bb{v}_{\Delta t} - T_{\Delta t} \bb{v}_{\Delta t})\cdot \bb{e}_z}{\Delta t}\psi+ (1-\varepsilon)\int_\Gamma v_0 \psi\\
         &&\to \int_{G_T} \partial_t\eta\partial_t \psi +\int_\Gamma v_0 \psi
	\end{eqnarray*}
for every $\psi \in C_c^\infty([0,T)\times\Gamma)$. \\

Next, the convergence of all fluid terms except the pressure follow by the standard arguments (see \cite[Section 3.6]{FeNobook}) which are localized away from the plate $\Gamma^\eta$ as follows. First, noting that
\begin{eqnarray*}
    &&\rho_{\Delta t} \bb{u}_{\Delta t}\in L^\infty(0,T; L^{\frac{2\gamma}{\gamma+1}}(\Omega^M)), \quad \rho_{\Delta t} \bb{u}_{\Delta t}\otimes \bb{u}_{\Delta t} \in L^2(0,T;L^{\frac{12\gamma}{6+2\gamma}}(\Omega^M) ),\\
    && \mathbb{S}_{\eta_{\Delta t},\varepsilon}(\nabla \bb{u}_{\Delta t}) \in L^2(Q_T^M)
\end{eqnarray*}
by $\eqref{uni1},\eqref{uni2},\eqref{bbu:eps}$, it is enough to identify these terms locally. Fix an open set $Q:=I\times O\subset Q_T^M$ such that $I\subset (0,T)$ and $O\subset \Omega^M\setminus \Gamma^\eta(t)$ for every $t\in I$. Then, by $\eqref{eta:deltat}$, there exists $\Delta t_Q$ such that for every $\Delta t<\Delta t_Q$ one has $O\subset \Omega^M\setminus \Gamma^{\eta_{\Delta t}}(t)$ for every $t\in I$. Now, from the continuity equation $\eqref{coupledback:cont}$, one has
\begin{eqnarray*}
    \partial_t \rho_{\Delta t} = - \nabla\cdot (\rho_{\Delta t} \bb{u}_{\Delta t}) \in L^2(I; H^{-1}(O))
\end{eqnarray*}
so $\rho_{\Delta t} \in L^\infty(I; L^\gamma(O))\cap H^1(I ; H^{-1}(O))\hookrightarrow\hookrightarrow L^2(I; H^{-1}(O))$, and since $\bb{u}_{\Delta t} \rightharpoonup \bb{u}$ weakly in $L^2(I; H^1(O))$, we conclude 
\begin{eqnarray*}
    \rho_{\Delta t} \bb{u}_{\Delta t} \rightharpoonup \rho\bb{u} \quad \text{ weakly in } L^2(I; L^{2}(O)).
\end{eqnarray*}
Next, from the momentum equation $\eqref{coupledback:mom}$, we have $\rho_{\Delta t} \bb{u}_{\Delta t} \in H^1(I; H^{-3}(O))$, while $\rho_{\Delta t} \bb{u}_{\Delta t}\in L^2(I; L^{2}(O))$ so $\rho_{\Delta t}\bb{u}_{\Delta t} \to \rho\bb{u}$ in $L^2(I; H^{-1}(O))$ by Aubin-Lions lemma. Once again, since $\bb{u}_{\Delta t} \rightharpoonup \bb{u}$ weakly in $L^2(I; H^1(O))$, one obtains
\begin{eqnarray}
    \rho_{\Delta t} \bb{u}_{\Delta t}\otimes \bb{u}_{\Delta t} \rightharpoonup \rho \bb{u}\otimes \bb{u}  \quad \text{ weakly in } L^2(I;L^{\frac{12\gamma}{6+2\gamma}}(O) ). \label{convrhouu}
\end{eqnarray}
Since the viscous term is linear in $\bb{u}_{\Delta t}$, its convergence is trivial so we therefore summarize the above arguments to conclude
\begin{eqnarray*}
    &&\lim_{\Delta t\to 0}\int_{Q_T^M}\rho_{\Delta t} \bb{u}_{\Delta t} \cdot \partial_t\boldsymbol\varphi =\int_{Q_T^M}\rho \bb{u} \cdot \partial_t\boldsymbol\varphi, ~  \lim_{\Delta t\to 0}\int_{Q_T^M}\rho_{\Delta t} \bb{u}_{\Delta t}\otimes \bb{u}_{\Delta t}:\nabla \boldsymbol{\varphi} =  \int_{Q_T^M} \rho \bb{u} \otimes \bb{u}:\nabla \boldsymbol{\varphi}, \\
    &&\lim_{\Delta t\to 0}\int_{Q_T^M}\mathbb{S}_{\eta_{\Delta t},\varepsilon}(\nabla\bb{u}_{\Delta t}):\nabla\boldsymbol\varphi =\int_{Q_T^M}\mathbb{S}_{\eta,\varepsilon}(\nabla\bb{u}):\nabla\boldsymbol\varphi,
\end{eqnarray*}
for every $\boldsymbol\varphi \in C^\infty([0,T]\times\mathbb{R}^3)$. Next, since the pressure $\rho_{\Delta t}^\gamma$ is only bounded in $L^1$ in space, we need to exclude the possibility of it converging to a measure. The following proof can be found in literature, however we present it here since it will be referred later in Lemma $\ref{weak:conv:pr:delta}$ when we show weak convergence of pressure even with contact.
\begin{lem}\label{weak:conv:pr}
There exists $\overline{p}\in L^1(Q_T^\eta)$ such that 
    \begin{eqnarray*}
        \lim_{\Delta t\to 0}\int_{Q_T^{\eta_{\Delta t}}}\rho_{\Delta t}^\gamma \varphi \to \int_{Q_T^\eta}\overline{p} \varphi,
    \end{eqnarray*}
for any $\varphi\in C^\infty([0,T]\times \mathbb{R}^3)$.
\end{lem}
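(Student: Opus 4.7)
The plan is to establish a uniform \emph{local} higher-integrability estimate on $\rho_{\Delta t}^\gamma$ in the interior of $Q_T^\eta$ by a Bogovskii-type argument, and then to pass to the limit by combining almost-everywhere convergence of the domain indicator functions with equi-integrability.

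First, I would fix a set $K = J \times K'$ with $J \Subset (0,T)$ and $K' \Subset \Omega^\eta(t)$ uniformly for $t \in \overline{J}$; by the uniform convergence \eqref{eta:deltat} there is some $\Delta t_K > 0$ such that $K' \Subset \Omega^{\eta_{\Delta t}}(t)$ for all $\Delta t < \Delta t_K$ and $t \in \overline J$. Choose a slightly larger Lipschitz set $U$ with $K' \Subset U \Subset \Omega^{\eta_{\Delta t}}(t)$, and test the momentum equation \eqref{coupledback:mom} with
\[
\boldsymbol\varphi_{\Delta t}(t,x) := \zeta(t)\,\mathcal{B}_U\!\left[\rho_{\Delta t}^\theta(t,x)\,\chi_{K'}(x) - \frac{1}{|U|}\int_U \rho_{\Delta t}^\theta \chi_{K'}\right],
\]
where $\mathcal{B}_U$ is the Bogovskii operator on $U$, $\zeta \in C_c^\infty(J)$ is a time cutoff, and $\theta > 0$ is a small parameter. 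Since this test is supported strictly inside the fluid region, its trace on $\Gamma^{\eta_{\Delta t}}$ vanishes and the coupling condition holds trivially with $\psi \equiv 0$. Using the $W^{1,q}$-boundedness of $\mathcal{B}_U$, the uniform estimates \eqref{uni1}--\eqref{bbu:eps}, and the hypothesis $\gamma > 3$ to control the convective term $\rho_{\Delta t}\bb{u}_{\Delta t}\otimes \bb{u}_{\Delta t}$, every term in \eqref{coupledback:mom} except the pressure contribution is bounded uniformly in $\Delta t$, yielding
\[
\int_J\int_{K'}\rho_{\Delta t}^{\gamma+\theta} \leq C(K).
\]

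With this local improvement in hand, $\{\rho_{\Delta t}^\gamma\}$ is equi-integrable on every such $K$, so a countable exhaustion of $Q_T^\eta$ by such sets combined with a diagonal extraction and the Dunford--Pettis theorem yields some $\overline{p} \in L^1_{\mathrm{loc}}(Q_T^\eta)$ with $\rho_{\Delta t}^\gamma \rightharpoonup \overline{p}$ weakly in $L^1(K)$ for each $K$; the uniform bound \eqref{uni1} together with Fatou's lemma upgrades this to $\overline{p} \in L^1(Q_T^\eta)$. To conclude, I would write $\int_{Q_T^{\eta_{\Delta t}}}\rho_{\Delta t}^\gamma\varphi = \int_{Q_T^M}\rho_{\Delta t}^\gamma \chi_{Q_T^{\eta_{\Delta t}}}\varphi$; since $\chi_{Q_T^{\eta_{\Delta t}}}\to\chi_{Q_T^\eta}$ a.e.\ by \eqref{eta:deltat} and $\rho_{\Delta t}^\gamma$ is equi-integrable near $Q_T^\eta$, a Vitali-type argument together with the weak $L^1_{\mathrm{loc}}$ convergence delivers the required limit.

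The main obstacle is the first step: producing a Bogovskii test function compatible with the interface coupling constraint $\gamma_{|\Gamma^{\eta_{\Delta t}}}\boldsymbol\varphi = \psi\bb{e}_z$ on a moving and possibly irregular interface. Localizing strictly inside the fluid region bypasses this constraint at the cost of obtaining only local estimates, which is why the exhaustion argument in the second step becomes necessary. The assumption $\gamma > 3$ is essential precisely because the Bogovskii function must absorb the convective product $\rho_{\Delta t}\bb{u}_{\Delta t}\otimes\bb{u}_{\Delta t}$, and without sufficient integrability of the density no admissible choice of $\theta > 0$ would produce a genuine improvement over the $L^\infty(L^\gamma)$ bound.
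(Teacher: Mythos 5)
Your first step (local higher integrability of the pressure via a Bogovskii test function supported strictly inside the fluid region) is essentially the paper's first step, which uses $\psi\nabla\Delta_{O'}^{-1}\rho_{\Delta t}$ instead of $\mathcal{B}_U$; these are interchangeable and both yield $\rho_{\Delta t}\in L^{\gamma+\theta}$ locally, hence weak $L^{1}$ (in fact $L^{(\gamma+1)/\gamma}$) compactness of $\rho_{\Delta t}^\gamma$ on compact subsets of $Q_T^\eta$ and a locally defined limit $\overline{p}$.

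The gap is in your final step. The lemma is stated for arbitrary $\varphi\in C^\infty([0,T]\times\mathbb{R}^3)$, not for $\varphi$ compactly supported in the open fluid domain, so you must rule out concentration of $\rho_{\Delta t}^\gamma$ on the boundary layers near $\{z=0\}$ and near the moving interface $\Gamma^{\eta_{\Delta t}}$. You assert that ``$\rho_{\Delta t}^\gamma$ is equi-integrable near $Q_T^\eta$,'' but this is exactly what your argument does not establish: the Bogovskii step gives equi-integrability only on sets $K\Subset Q_T^\eta$, while up to the boundary you only have the uniform $L^\infty(0,T;L^1)$ bound on $\rho_{\Delta t}^\gamma$ coming from \eqref{uni1}, which is compatible with a fixed amount of mass living in the shrinking strips $\{0<z<\kappa\}$ and $\{\eta_{\Delta t}-\kappa<z<\eta_{\Delta t}\}$ and hence with $\rho_{\Delta t}^\gamma$ converging to $\overline{p}$ plus a singular measure supported on $\partial\Omega^\eta$. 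The paper closes this gap with a separate, quantitative boundary estimate in the spirit of Kuku\v{c}ka: testing the momentum equation with the explicit piecewise-affine vertical test function $\boldsymbol\varphi_{\Delta t}^\kappa\bb{e}_z$ yields
\begin{eqnarray*}
\int_0^T\int_\Gamma\left(\int_0^\kappa+\int_{\eta_{\Delta t}-\kappa}^{\eta_{\Delta t}}\right)\rho_{\Delta t}^\gamma\leq C(\kappa^{1-s}+\kappa),\quad s<1,
\end{eqnarray*}
uniformly in $\Delta t$, which is the uniform smallness of the boundary contribution that your Vitali argument implicitly requires. Without this (or an equivalent substitute), the passage from weak $L^1_{\mathrm{loc}}$ convergence in the interior to convergence of $\int_{Q_T^{\eta_{\Delta t}}}\rho_{\Delta t}^\gamma\varphi$ for general $\varphi$ does not follow.
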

\begin{proof}
The improved pressure estimates which in the standard framework on fixed domains hold up to boundary (see \cite[Section 2.2.5]{FeNobook}), in our scenario hold away from the $\Gamma^\eta$ and need to be localized. 

First, by $\eqref{eta:deltat}$, for every open set $Q=I\times O \subset Q_T^\eta$, there is a $\Delta t_Q$ such that for every $\Delta t<\Delta t_Q$ we have $Q\subset Q_T^{\eta_{\Delta t}}$. Fix such $Q$ and let $I'\times O'=:Q'\supset Q$ be an open set with the same properties so that both $Q,Q'$ are sufficiently regular. By testing the momentum equation $\eqref{coupledback:mom}$ with $\psi \nabla \Delta_{O'}^{-1}\rho_{\Delta t}$, where $\Delta_{O'}^{-1}$ is the inverse Laplacian on $O'$ with zero Dirichlet boundary data and $\psi \in C_c^\infty(Q')$ is non-negative and $\psi=1$ on $Q$ (see for example in \cite[Lemma 6.3]{Breit}), leads to
\begin{eqnarray*}
    \rho_{\Delta t} \in L^{\gamma+1}(Q),
\end{eqnarray*}
so $\rho_{\Delta t}^\gamma \rightharpoonup \overline{p}$ weakly in $L^{\frac{\gamma+1}{\gamma}}(Q)$. 
Therefore, there exists a function $\overline{p}\in L^1(Q_T^\eta)$ defined locally as a weak limit of $\rho_{\Delta t}^\gamma$. However, we still need to ensure there are no concentrations at $\partial\Omega^\eta$. This is done by adapting the arguments from \cite{kukucka} to our fluid-structure interaction setting, as in \cite[Lemma 6.4]{Breit}. Fix $0<\kappa<\delta/4$ and let
\begin{eqnarray*}
    \boldsymbol\varphi_{\Delta t}^\kappa(t,x,y,z):= 
    \begin{cases}
         \frac{z-\eta_{\Delta t}(t,x,y)}{\kappa},& \text{ for } \eta_{\Delta t}(t,x,y)-\kappa< z < \eta_{\Delta t}(t,x,y),\\
        \frac{\eta_{\Delta t}(t,x,y)-2 z}{\eta_{\Delta t}(t,x,y) - 2\kappa},& \text{ for } \kappa\leq z\leq \eta_{\Delta t}(t,x,y)-\kappa,\\
        \frac{z}\kappa , & \text{ for } 0<z<\kappa ,\\
        0, & \text{ elsewhere on } Q_T^M.
    \end{cases}
\end{eqnarray*}
Testing the momentum equation with $(\varphi_{\Delta t}^\kappa \bb{e}_z, 0)$, one has
    \begin{eqnarray*}
        &&\int_0^T \int_\Gamma\left(\int_0^\kappa+\int_{\eta_{\Delta t}-\kappa}^{ \eta_{\Delta t}}\right)  \frac1\kappa\rho_{\Delta t}^\gamma \nonumber\\
        &&=\int_{Q_T^{\eta_{\Delta t}}} \rho_{\Delta t}^\gamma (\nabla \cdot \boldsymbol\varphi_{\Delta t}^\kappa)- \int_0^T\int_\Gamma\int_\kappa^{\eta_{\Delta t}-\kappa} \rho_{\Delta t}^\gamma (\nabla \cdot \boldsymbol\varphi_{\Delta t}^\kappa)\nonumber\\
        &&=-\int_{Q_T^{\eta_{\Delta t}}} \rho_{\Delta t} \bb{u}_{\Delta t} \cdot\partial_t \boldsymbol\varphi_{\Delta t}^\kappa - \int_{Q_T^{\eta_{\Delta t}}}(\rho_{\Delta t} \bb{u}_{\Delta t} \otimes \bb{u}_{\Delta t}):\nabla\boldsymbol\varphi_{\Delta t}^\kappa
         +  \int_{Q_T^{\eta_{\Delta t}}} \mathbb{S}_{\eta_{\Delta t},\varepsilon}( \nabla\bb{u}_{\Delta t}): \nabla \boldsymbol\varphi_{\Delta t}^\kappa \nonumber\\
        &&\quad + \int_{\Omega^{\eta_{\Delta t}}(T)}\rho_{\Delta t}(T)\bb{u}_{\Delta t}(T)\cdot \boldsymbol\varphi_{\Delta t}^\kappa(T)-\int_{\Omega^{\eta_{0,\delta}}}(\rho \bb{u})_{0,\delta}\cdot\boldsymbol\varphi_{\Delta t}^\kappa(0)-\int_0^T\int_\Gamma\int_\kappa^{\eta_{\Delta t}-\kappa} \rho_{\Delta t}^\gamma (\nabla \cdot \boldsymbol\varphi_{\Delta t}^\kappa).
	\end{eqnarray*}
Let us bound the right-hand side, starting with
\begin{eqnarray*}
    &&\left|\int_{Q_T^{\eta_{\Delta t}}}\rho_{\Delta t} \bb{u}_{\Delta t} \cdot\partial_t \boldsymbol\varphi_{\Delta t}^\kappa\right| \\
    &&\leq \|\rho_{\Delta t}\|_{L^\infty(0,T; L^\gamma(\Omega^{\eta_{\Delta t}}))} \|\bb{u}_{\Delta t} \|_{L^2(0,T; L^6(\Omega^{\eta_{\Delta t}}))} \\
    &&\times\Bigg(\underbrace{\left|\left|\int_\Gamma\left(\int_0^\kappa+\int_{\eta_{\Delta t}-\kappa}^{ \eta_{\Delta t}} \right) |\partial_t  \boldsymbol\varphi_{\Delta t}^\kappa|^2  \right|\right|_{L^\infty(0,T)}^{\frac12}}_{\leq \frac{C}\kappa} \underbrace{\left|\left|\int_\Gamma\left(\int_0^\kappa+\int_{\eta_{\Delta t}-\kappa}^{ \eta_{\Delta t}}   \right) 1^r  \right|\right|_{L^\infty(0,T)}^{\frac1r}}_{\leq \kappa^\frac1r} \\
    && \qquad +   \underbrace{\left|\left|\int_\Gamma\int_\kappa^{\eta_{\Delta t}-\kappa} |\partial_t  \boldsymbol\varphi_{\Delta t}^\kappa|^2  \right|\right|_{L^\infty(0,T)}^{\frac12} \left|\left|\int_\Gamma\int_\kappa^{\eta_{\Delta t}-\kappa}   1^r  \right|\right|_{L^\infty(0,T)}^{\frac1r} }_{\leq C(\delta)}              \Bigg) \\    
    &&\leq C\left(\frac{1}{\kappa^{1-\frac1r}} +1 \right),
\end{eqnarray*}
where $r = (1 - \frac1\gamma-\frac16 - \frac12)^{-1}$. Next, one has
\begin{eqnarray*}
   &&\left| \int_{Q_T^{\eta_{\Delta t}}}(\rho_{\Delta t} \bb{u}_{\Delta t} \otimes \bb{u}_{\Delta t}):\nabla\boldsymbol\varphi_{\Delta t}^\kappa \right|\\
   &&\leq \|\rho_{\Delta t}\|_{L^\infty(0,T; L^\gamma(\Omega^{\eta_{\Delta t}}))} \|\bb{u}_{\Delta t} \|_{L^2(0,T; L^6(\Omega^{\eta_{\Delta t}}))}^2  \nonumber\\
   &&\times\Bigg(\underbrace{\left|\left|\int_\Gamma\left(\int_0^\kappa+\int_{\eta_{\Delta t}-\kappa}^{ \eta_{\Delta t}} \right) |\nabla  \boldsymbol\varphi_{\Delta t}^\kappa|^q  \right|\right|_{L^\infty(0,T)}^{\frac1q}}_{\leq \frac{C}\kappa} \underbrace{\left|\left|\int_\Gamma\left(\int_0^\kappa+\int_{\eta_{\Delta t}-\kappa}^{ \eta_{\Delta t}}   \right) 1^r  \right|\right|_{L^\infty(0,T)}^{\frac1r}}_{\leq \kappa^{\frac1r}} \\
    && \qquad +   \underbrace{\left|\left|\int_\Gamma\int_\kappa^{\eta_{\Delta t}-\kappa} |\nabla  \boldsymbol\varphi_{\Delta t}^\kappa|^q  \right|\right|_{L^\infty(0,T)}^{\frac1q} \left|\left|\int_\Gamma\int_\kappa^{\eta_{\Delta t}-\kappa}   1^r  \right|\right|_{L^\infty(0,T)}^{\frac1r} }_{\leq C(\delta)}              \Bigg) \\
   &&\leq C\left(\frac{1}{\kappa^{1-\frac1r}} +1 \right),
\end{eqnarray*}
where $q<\infty$ and $1<r = (1 - \frac1\gamma-\frac26 - \frac1q)^{-1}$. As the other terms can be bounded in a simpler fashion, we arrive at
\begin{eqnarray*}
   &&\int_0^T \int_\Gamma\left(\int_0^\kappa+\int_{\eta_{\Delta t}-\kappa}^{ \eta_{\Delta t}}\right) \frac1\kappa\rho_{\Delta t}^\gamma \leq C(\kappa^{-s}+1), \quad  \text{for some } s<1,
\end{eqnarray*}
where $C$ does not depend on $\Delta t$, so
\begin{eqnarray*}
     &&\int_0^T \int_\Gamma\left(\int_0^\kappa+\int_{\eta_{\Delta t}-\kappa}^{ \eta_{\Delta t}}\right)  \frac1\kappa\rho_{\Delta t}^\gamma  \leq C(\kappa^{-s+1}+\kappa).
\end{eqnarray*}
Therefore,
\begin{eqnarray*}
    &&\lim_{\Delta t\to 0}\int_{Q_T^{\eta_{\Delta t}}} \rho_{\Delta t}^\gamma \varphi  \\
    &&=\lim_{\Delta t\to 0}\int_0^T \int_\Gamma \int_\kappa^{\eta_{\Delta t}-\kappa} \rho_{\Delta t}^\gamma \varphi  +\lim_{\Delta t\to 0}\int_0^T  \int_\Gamma\left(\int_0^\kappa+\int_{\eta_{\Delta t}-\kappa}^{ \eta_{\Delta t}}\right) \rho_{\Delta t}^\gamma \varphi \\
    && = \int_0^T \int_\Gamma \int_\eta^{\eta -\kappa} \overline{p} \varphi  + o(\kappa^{s'}),
\end{eqnarray*}
for some $s'>0$. Passing to the limit $\kappa\to 0$, the proof is complete.

\end{proof}
As the fluid domain at this approximation level still includes the region above $\Gamma^\eta$, we also need the following weak convergence which is proved in exact same way:
\begin{cor}
There exists $\overline{p}_M\in L^1(Q_T^M\setminus Q_T^\eta)$ such that 
    \begin{eqnarray*}
         \lim_{\Delta t\to 0}\int_0^T\int_{\Omega^M\setminus\Omega^{\eta_{\Delta t}} }\rho_{\Delta t}^\gamma \varphi = \int_0^T\int_{\Omega^M\setminus\Omega^\eta }\overline{p}_M \varphi
    \end{eqnarray*}
for any $\varphi\in C^\infty([0,T]\times \mathbb{R}^3)$.
\end{cor}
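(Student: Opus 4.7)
The plan is to transcribe the proof of Lemma \ref{weak:conv:pr} almost verbatim, with the roles of ``below the plate'' and ``above the plate'' swapped. I would proceed in three steps.

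First, I would obtain weak convergence of $\rho_{\Delta t}^\gamma$ on compactly contained subsets of $Q_T^M\setminus Q_T^\eta$. For any open $Q=I\times O$ with $\overline Q\subset Q_T^M\setminus\overline{Q_T^\eta}$, the uniform convergence \eqref{eta:deltat} provides a threshold $\Delta t_Q$ beyond which $Q\subset Q_T^M\setminus Q_T^{\eta_{\Delta t}}$. I would then test \eqref{coupledback:mom} with $\psi\nabla\Delta_{O'}^{-1}\rho_{\Delta t}$ on a slightly larger $O'\supset O$, exactly as in the lemma, to obtain the uniform improved pressure bound $\rho_{\Delta t}\in L^{\gamma+1}(Q)$ and hence weak convergence of $\rho_{\Delta t}^\gamma$ in $L^{(\gamma+1)/\gamma}(Q)$ along a subsequence. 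A diagonal exhaustion defines $\overline p_M\in L^1_{\mathrm{loc}}(Q_T^M\setminus Q_T^\eta)$.

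Second, and this is the main obstacle, I would rule out concentration of $\rho_{\Delta t}^\gamma$ on the internal boundary of $Q_T^M\setminus Q_T^\eta$, namely on $\Gamma^\eta$ approached from above and on $\Gamma\times\{M\}$. The plan is to design a mirrored version of $\boldsymbol\varphi_{\Delta t}^\kappa$ supported in $\{z>\eta_{\Delta t}+\varepsilon\}\cap\Omega^M$, with $1/\kappa$-spikes concentrated in the two $\kappa$-thin layers $\{\eta_{\Delta t}+\varepsilon<z<\eta_{\Delta t}+\varepsilon+\kappa\}$ and $\{M-\kappa<z<M\}$ and a plateau in between. Placing the cutoff above the viscous transition layer $\{\eta_{\Delta t}<z<\eta_{\Delta t}+\varepsilon\}$ avoids any interaction with the jump in $\chi_{\eta_{\Delta t}}^\varepsilon$, so \eqref{uni2} still controls the viscous term on the support. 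Testing \eqref{coupledback:mom} with $(\varphi\bb{e}_z,0)$ and rearranging would isolate $\tfrac1\kappa\rho_{\Delta t}^\gamma$ on the two layers against fluid quantities bounded by the same H\"older chain as in the lemma using \eqref{uni1}--\eqref{bbu:eps}, yielding
\begin{equation*}
\int_0^T\int_\Gamma \Bigl(\int_{\eta_{\Delta t}+\varepsilon}^{\eta_{\Delta t}+\varepsilon+\kappa} + \int_{M-\kappa}^M\Bigr) \rho_{\Delta t}^\gamma \leq C(\kappa^{1-s}+\kappa)
\end{equation*}
for some $s<1$, with $C$ independent of $\Delta t$.

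Finally, I would split any test $\varphi\in C^\infty([0,T]\times\mathbb{R}^3)$ into its restriction to the region of $Q_T^M\setminus Q_T^{\eta_{\Delta t}}$ at distance at least $\kappa$ from the internal boundary, where the weak convergence from step one applies, and a remainder supported on the two $\kappa$-layers, which by step two is $O(\kappa^{1-s})$ uniformly in $\Delta t$. Passing first $\Delta t\to 0$ and then $\kappa\to 0$ yields the identity claimed in the corollary. The hard part is not any single estimate but the setup of the two-sided cutoff above the plate and the verification that no new term arises from the viscous discontinuity; the rest is a direct transcription of the argument from Lemma \ref{weak:conv:pr}.
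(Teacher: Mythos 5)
Your overall strategy is exactly the paper's: the paper disposes of this corollary with the single remark that it is ``proved in exact same way'' as Lemma \ref{weak:conv:pr}, and your three steps (local improved pressure estimates above the plate, a mirrored spiked test function to exclude concentration on the internal boundary, then a $\kappa$-splitting) are the correct transcription. However, there is one concrete defect in your execution of step two. You place the $1/\kappa$-spike in the layer $\{\eta_{\Delta t}+\varepsilon<z<\eta_{\Delta t}+\varepsilon+\kappa\}$, i.e.\ at distance $\varepsilon$ \emph{above} the graph, in order to avoid the transition region of $\chi_{\eta_{\Delta t}}^\varepsilon$. But the concentration you must exclude lives on $\Gamma^{\eta}$ itself, approached from above; the only layer where it can hide is $\{\eta_{\Delta t}<z<\eta_{\Delta t}+\kappa\}$, and your test function has bounded divergence there (it is identically zero there, in fact). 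Your final splitting then does not close: the region within distance $\kappa$ of the graph from above is covered neither by the interior weak convergence of step one (which requires compact containment in $Q_T^M\setminus\overline{Q_T^\eta}$, with constants that degenerate as you approach the graph) nor by your thin-layer bound, which sits at height $\varepsilon$ above it. So a defect measure supported on $\Gamma^\eta$ from above is not ruled out as written.

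The fix is to do what the lemma does, with the spike directly adjacent to the interface, i.e.\ supported in $\{\eta_{\Delta t}<z<\eta_{\Delta t}+\kappa\}$ and $\{M-\kappa<z<M\}$ with the plateau in between. Your stated reason for avoiding this --- ``interaction with the jump in $\chi_{\eta_{\Delta t}}^\varepsilon$'' --- is not an actual obstruction: by \eqref{chi} the cutoff $\chi_{\eta_{\Delta t}}^\varepsilon$ is smooth with $\varepsilon\leq\chi_{\eta_{\Delta t}}^\varepsilon\leq 1$, and \eqref{uni2} gives $\|\mathbb{S}_{\eta_{\Delta t},\varepsilon}(\nabla\bb{u}_{\Delta t})\|_{L^2(Q_T^M)}^2\leq\|\chi_{\eta_{\Delta t}}^\varepsilon\|_{L^\infty}\int_{Q_T^M}\mathbb{S}_{\eta_{\Delta t},\varepsilon}(\nabla\bb{u}_{\Delta t}):\nabla\bb{u}_{\Delta t}\leq C$ uniformly in $\Delta t$ (recall $\varepsilon$ is fixed throughout this limit), so the viscous term against the spiked test function is bounded by the same H\"older chain as in the lemma, of order $\kappa^{-1/2}$, which is admissible. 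With the spike repositioned, the rest of your argument goes through verbatim; the top boundary $\Gamma\times\{M\}$ is fixed and $\bb{u}_{\Delta t}\in H_0^1(\Omega^M)$, so that part is the standard Kuku\v{c}ka situation and causes no additional trouble.
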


Now, in order to identify the pressure, the approach developed by Lions in \cite{Lions} based on convergence of effective viscous flux and the renormalized continuity equation can be done locally (see also \cite[Section 6]{Breit}) to conclude
\begin{eqnarray*}
    \rho_{\Delta t} \to  \rho \quad  \text{ a.e. on }Q_T^\eta,
\end{eqnarray*} 
which by the weak convergence of pressure and Vitali convergence theorem implies
\begin{eqnarray*}
    \lim_{\Delta t\to 0}\int_{Q_T^M}\rho_{\Delta t}^\gamma \nabla\cdot \boldsymbol\varphi = \int_{Q_T^M}\rho^\gamma \nabla\cdot \boldsymbol\varphi 
\end{eqnarray*}
for every $C^\infty([0,T]\times\mathbb{R}^3)$. Finally, kinematic coupling combined with the fact that $\rho_{0,\delta}=0$ a.e. on $\Omega^M\setminus\Omega^{\eta_{0,\delta}}$ implies that
    \begin{eqnarray*}
    \rho = 0 \quad \text{ on } Q_T^M\setminus Q_T^\eta
\end{eqnarray*}
by \cite[Lemma 3.1]{heatexchange}, as no mass can pass through the elastic boundary anymore. The proof of the mentioned lemma follows the approach from \cite[Lemma 4.1]{commoving}, which considers compressible fluids on moving domains. Thus, $(\rho,\bb{u},\eta)$ satisfy: \\
\begin{enumerate}
\item The renormalized continuity equation
    \begin{eqnarray}
        \int_{Q_T^\eta} \rho B(\rho)( \partial_t \varphi +\bb{u}\cdot \nabla \varphi) =\int_{Q_T^\eta} b(\rho)(\nabla\cdot \bb{u}) \varphi -\int_{\Omega^{\eta_{0,\delta}}}\rho_{0,\delta}B(\rho_{0,\delta}) \varphi(0)   \label{coupledback:cont:eps}
	\end{eqnarray}
holds for all functions $\varphi \in C_c^\infty([0,T)\times \mathbb{R}^3)$ and any $b\in L^\infty (0,\infty) \cap C[0,\infty)$ such that $b(0)=0$ with $B(\rho)$ being any primitive function to $\frac{b(\rho)}{\rho^2}$; 
\item The coupled momentum equation 
    \begin{eqnarray}
         &&\int_{Q_T^\eta} \rho \bb{u} \cdot\partial_t \boldsymbol\varphi + \int_{Q_T^\eta}(\rho \bb{u} \otimes \bb{u}):\nabla\boldsymbol\varphi +\int_{Q_T^\eta} \rho^\gamma (\nabla \cdot \boldsymbol\varphi)-  \int_{Q_T^M} \mathbb{S}_{\eta,\varepsilon}( \nabla\bb{u}): \nabla \boldsymbol\varphi \nonumber\\
         &&+ \int_{G_T} \partial_t \eta\partial_t \psi - \int_{G_T}\Delta\eta \Delta\psi - \varepsilon\int_{G_T} \partial_t \nabla \eta \cdot \nabla \psi +\int_{G_T} F_{con,\delta}\psi +\int_{G_T} F\psi \nonumber\\
        &&= -\int_{\Omega^{\eta_{0,\delta}}}(\rho\bb{u})_{0,\delta}\cdot\boldsymbol\varphi(0)- \int_\Gamma v_0 \psi(0)  \label{coupledback:mom:eps}
	\end{eqnarray}
holds for all $\boldsymbol\varphi\in C_c^\infty([0,T)\times\mathbb{R}^3)$ and $\psi \in C_c^\infty([0,T)\times\Gamma)$ such that $\gamma_{|\Gamma^\eta}\boldsymbol\varphi = \psi \bb{e}_z$ and $\boldsymbol{\varphi}=0$ on $\{z=0\}$;
 \item The following energy inequality
 \begin{eqnarray}
     &&\frac12\int_{\Omega^\eta(t)} \rho|\bb{u}|^2(t) +  \int_{\Omega^\eta(t)} \frac{\rho^\gamma}{\gamma-1}(t)+\int_0^t\int_{\Omega^M} \mathbb{S}_{\eta,\varepsilon} (\nabla\bb{u}):\nabla\bb{u}+  \nonumber \\
     &&\quad + \frac{1-\varepsilon}2\int_{\Gamma}|\partial_t \eta|^2(t)+\frac12\int_{\Gamma}|\Delta\eta|^2(t) + \varepsilon \int_0^t \int_\Gamma |\partial_t \nabla \eta|^2 \nonumber\\
    &&\leq  \int_0^t\int_{G_T} F\partial_t \eta+ \displaystyle{\int_{\Omega^{\eta_{0,\delta}}} \left( \frac{1}{2\rho_{0,\delta}} |(\rho\bb{u})_{0,\delta}|^2 + \frac{\rho_{0,\delta}^\gamma}{\gamma-1}   \right)} + \int_{\Gamma} \left(\frac{1-\varepsilon}{2}|v_0|^2 + \frac{1}{2} |\Delta \eta_{0,\delta}|^2  \right) .\nonumber \\
     && \label{coupled:en:app:eps}
 \end{eqnarray}
 holds for a.a. $t\in (0,T)$.
\end{enumerate}

\subsection{Passing to the limit $\varepsilon \to 0$}
Denote the solution obtained in the previous section as $(\rho_{\varepsilon},\bb{u}_{\varepsilon },\eta_{\varepsilon})$ which satisfies $\eqref{coupledback:cont:eps},\eqref{coupledback:mom:eps},\eqref{coupled:en:app:eps}$ and denote the corresponding weak limits of a converging subsequence as $(\rho,\bb{u},\eta)$ as $\varepsilon \to 0$, where the convergence on the moving domain is to be understood as in \cite[
Definition 2.7]{Breit}, or more precisely:
\begin{eqnarray*}
    &\rho_\varepsilon\chi_{|\Omega^{\eta_\varepsilon}} \rightharpoonup \rho \chi_{|\Omega^{\eta}}&\quad \text{ weakly* in }\quad L^\infty(0,T; L^\gamma( \mathbb{R}^3)), \\
    &\bb{u}_\varepsilon\chi_{|\Omega^{\eta_\varepsilon}} \rightharpoonup \bb{u}\chi_{|\Omega^{\eta}}&\quad \text{ weakly in }\quad L^2((0,T)\times \mathbb{R}^3), \\
    &\nabla\bb{u}_\varepsilon\chi_{|\Omega^{\eta_\varepsilon}} \rightharpoonup \nabla\bb{u}\chi_{|\Omega^{\eta}}&\quad \text{ weakly in }\quad L^2((0,T)\times \mathbb{R}^3),
\end{eqnarray*}
where by construction $\bb{u}\in L^2(0,T; H_{0z}^1(\Omega^\eta(t)))$, while once again
\begin{eqnarray*}
     &\eta_\varepsilon \rightharpoonup \eta& \quad \text{ weakly* in }\quad W^{1,\infty}(0,T; L^2(\Gamma))\cap L^\infty(0,T; H^2(\Gamma)).
\end{eqnarray*}
Fix $\boldsymbol\varphi\in C^\infty([0,T]\times\mathbb{R}^3)$. First, it is worth noting that
	\begin{eqnarray*}
		&&\int_{Q_T^M} \mathbb{S}_{\eta_\varepsilon,\varepsilon}( \nabla\bb{u}_\varepsilon): \nabla \boldsymbol\varphi = \int_{Q_T^{\eta_\varepsilon}} \mathbb{S}_{\eta_\varepsilon,\varepsilon}( \nabla\bb{u}_\varepsilon): \nabla \boldsymbol\varphi + \int_{Q_T^M\setminus Q_T^{\eta_\varepsilon}} \mathbb{S}_{\eta_\varepsilon,\varepsilon}( \nabla\bb{u}_\varepsilon): \nabla \boldsymbol\varphi ,
	\end{eqnarray*}
while
	\begin{eqnarray*}
		\int_{Q_T^M\setminus Q_T^{\eta_\varepsilon}} \mathbb{S}_{\eta_\varepsilon,\varepsilon}( \nabla\bb{u}_\varepsilon): \nabla \boldsymbol\varphi &=& \int_0^T \int_\Gamma \int_{\eta_\varepsilon+\varepsilon^2}^M\underbrace{\mathbb{S}_{\eta_\varepsilon,\varepsilon}( \nabla\bb{u}_\varepsilon)}_{=\varepsilon \mathbb{S}( \nabla\bb{u}_\varepsilon)}: \nabla \boldsymbol\varphi \\
         &&+ \int_0^T \int_\Gamma \int_{\eta_\varepsilon}^{\eta_\varepsilon+\varepsilon^2}\mathbb{S}_{\eta_\varepsilon,\varepsilon}( \nabla\bb{u}_\varepsilon): \nabla \boldsymbol\varphi \to 0 
	\end{eqnarray*}
as $\varepsilon\to 0$, since 
    \begin{eqnarray*}
        && \int_0^T \int_\Gamma \int_{\eta_\varepsilon}^{\eta_\varepsilon+\varepsilon^2}\mathbb{S}_{\eta_\varepsilon,\varepsilon}( \nabla\bb{u}_\varepsilon): \nabla \boldsymbol\varphi \leq C \Bigg(\underbrace{\int_0^T \int_\Gamma \int_{\eta_\varepsilon}^{\eta_\varepsilon+\varepsilon^2}|\nabla\bb{u}_\varepsilon|^2 }_{\leq \frac{C}{\varepsilon} \text{ by } \eqref{chi},\eqref{coupled:en:app:eps}}\Bigg)^{\frac12} \left(\int_0^T \int_\Gamma \int_{\eta_\varepsilon}^{\eta_\varepsilon+\varepsilon^2} 1^2\right)^{\frac12} \leq C\sqrt{\varepsilon},\\        
        &&\sqrt{\varepsilon}\int_0^T \int_\Gamma \int_{\eta_\varepsilon+\varepsilon^2}^M\mathbb{S}(\nabla\bb{u}_\varepsilon):\nabla\bb{u} \leq C,
    \end{eqnarray*}
so we conclude
	\begin{eqnarray*}
		\lim_{\varepsilon\to 0}\int_{Q_T^M} \mathbb{S}_{\eta_\varepsilon,\varepsilon}( \nabla\bb{u}_\varepsilon): \nabla \boldsymbol\varphi \to \int_{Q_T^\eta} \mathbb{S}( \nabla\bb{u}): \nabla \boldsymbol\varphi.
	\end{eqnarray*}
Next, by Lemma $\ref{H1}$ and $\eqref{eta:max:est}$
	\begin{eqnarray*}
		\|\bb{u}\|_{L^2(0,T; H^1(\Omega^\eta))}^2 \leq (1+\|\eta\|_{L^\infty((0,T)\times\Gamma)}^2)\int_{Q_T^\eta} \mathbb{S}(\nabla \bb{u}):\nabla \bb{u}\leq C(1+T^2),
	\end{eqnarray*}
and consequently
	\begin{eqnarray*}
		\|\bb{u}\|_{L^2(0,T;L^p(\Omega^\eta))} \leq C \|\bb{u}\|_{L^2(0,T; H^1(\Omega^\eta))} \leq C(1+T^2),
	\end{eqnarray*}
for any $p<6$ by Lemma $\ref{imbed}$.\\
	
The compactness arguments stay the same as in the limit $\Delta t \to 0$. The limiting functions $(\rho,\bb{u},\eta)$ satisfy the renormalized continuity equation $\eqref{reconteqweak}$, the energy inequality $\eqref{en:ineq}$ and the coupled momentum equation with the presence of contact force, or more precisely:
    \begin{eqnarray}
		&&\int_{Q_T^\eta} \rho \bb{u} \cdot\partial_t \boldsymbol\varphi + \int_{Q_T^\eta}(\rho \bb{u} \otimes \bb{u}):\nabla\boldsymbol\varphi +\int_{Q_T^\eta} \rho^\gamma (\nabla \cdot \boldsymbol\varphi)-  \int_{Q_T^\eta} \mathbb{S}( \nabla\bb{u}): \nabla \boldsymbol\varphi \nonumber\\
		&&\quad +\int_{G_T} \partial_t \eta \partial_t \psi  - \int_{G_T}\Delta\eta \Delta\psi +\int_{G_T} F_{con,\delta}\psi +\int_{G_T} F\psi \nonumber\\
		&&= -\int_{\Omega^{\eta_{0,\delta}}}(\rho\bb{u})_{0,\delta}\cdot\boldsymbol\varphi(0) - \int_\Gamma v_0 \psi, \nonumber \\\label{momeqweak:delta}
    \end{eqnarray}
for all $\boldsymbol\varphi \in C_c^\infty([0,T)\times \mathbb{R}^3)$
and all $\psi\in C_c^\infty([0,T)\times \Gamma)$ such that $\boldsymbol\varphi(t,x,y,\eta(t,x,y))=\psi(t,x,y)\bb{e}_z$ on $G_T$ and $\boldsymbol{\varphi}=0$ on $\{z=0\}$. 
\subsubsection{Deriving the contact (in)equality and the corresponding estimates} \label{cont:sec}
Since $\eta_\delta\geq \delta>0$, we are allowed to test the coupled momentum equation $\eqref{momeqweak:delta}$ with $(\frac{z\psi}{\eta}\bb{e}_z,\psi)$ for any $\psi\in C_c^\infty([0,T)\times \Gamma)$ to obtain:
\begin{eqnarray*}
		&&\int_{Q_T^\eta} \rho \bb{u} \cdot \left(0,0, \frac{z\partial_t \psi}{\eta}- \frac{z\psi\partial_t \eta}{\eta^2}\right)   + \int_{Q_T^\eta}(\rho \bb{u} \otimes \bb{u}): \begin{bmatrix}
        0 &0 &0\\ 0 & 0 & 0 \\ 
         \frac{z\partial_x\psi}{\eta} - \frac{z\psi \partial_x \eta}{\eta^2}& \frac{z\partial_y \psi}{\eta} - \frac{z\psi \partial_y \eta}{\eta^2} &  \frac{\psi}\eta
		\end{bmatrix} \\
		&&\quad +\int_{Q_T^\eta} \rho^\gamma \frac{\psi}\eta 
		-  \int_{Q_T^\eta} \mathbb{S}( \nabla\bb{u}):  \begin{bmatrix}
         0 &0 &0\\ 0 & 0 & 0 \\ 
        \frac{z\partial_x \psi}{\eta} - \frac{z\psi \partial_x \eta}{\eta^2}& \frac{z\partial_y \psi}{\eta} - \frac{z\psi \partial_y \eta}{\eta^2} &  \frac{\psi}\eta
		\end{bmatrix} +\int_{G_T} \partial_t \eta \partial_t \psi - \int_{G_T}\Delta\eta \Delta\psi     \\
		&&\quad +\int_{G_T} F_{con,\delta}\psi +\int_{G_T} F\psi \\    
		&&=-\int_{\Omega^{\eta_{0,\delta}}}(\rho u_3)_0\frac{z\psi}{\eta_{0,\delta}} - \int_\Gamma v_0 \psi,
\end{eqnarray*}
where $(\rho u_3)_0 = (\rho \bb{u})_0 \cdot \bb{e}_z$. Now, we can write the convective term as
\begin{eqnarray*}
		&&\int_{Q_T^\eta} \rho \bb{u} \otimes \bb{u}:  \begin{bmatrix}
         0 &0 &0\\ 0 & 0 & 0 \\ 
        \frac{z\partial_x\psi}{\eta} - \frac{z\psi \partial_x \eta}{\eta^2}& \frac{z\partial_y \psi}{\eta} - \frac{z\psi \partial_y \eta}{\eta^2} &  \frac{\psi}\eta
		\end{bmatrix} \\
	&&= \int_{Q_T^\eta} \rho u_1 u_3\left(\frac{z\psi_x}{\eta} - \frac{z\psi 
    \partial_x \eta}{\eta^2} \right) + \int_{Q_T^\eta} \rho u_2 u_3\left(\frac{z\partial_y\psi}{\eta} - \frac{z\psi \partial_y \eta}{\eta^2} \right) + \int_{Q_T^\eta}\rho |u_3|^2 \frac{\psi}\eta .
\end{eqnarray*}
Next, the viscous term is written as
\begin{eqnarray*}
	&&\int_{Q_T^\eta} \mathbb{S}( \nabla\bb{u}):  \begin{bmatrix}
		0 &0 &0\\ 0 & 0 & 0 \\ 
	   \frac{z\partial_x\psi}{\eta} - \frac{z\psi \partial_x\eta}{\eta^2}& \frac{z\partial_y\psi}{\eta} - \frac{z\psi \partial_y \eta}{\eta^2} &  \frac{\psi}\eta
		\end{bmatrix} \\
	&&= \mu\int_{Q_T^\eta} (\partial_z u_1 + \partial_x u_3)\left(\frac{z\partial_x \psi}{\eta} - \frac{z\psi \partial_x \eta}{\eta^2} \right) +\mu\int_{Q_T^\eta} (\partial_z u_2 + \partial_y u_3)\left(\frac{z\partial_y \psi}{\eta} - \frac{z\psi \partial_y\eta}{\eta^2} \right) \\
    &&\quad+ 2\mu\int_{Q_T^\eta} \partial_z u_3 \frac{\psi}\eta + \left(\lambda - \frac{2\mu}3\right)\int_{Q_T^\eta} \left(\partial_x u_1 + \partial_y u_2 + \partial_z u_3 \right) \frac{\psi}\eta \\
    && = \mu\int_{Q_T^\eta} (\partial_z u_1 + \partial_x u_3)\left(\frac{z\partial_x \psi}{\eta} - \frac{z\psi \partial_x \eta}{\eta^2} \right) +\mu\int_{Q_T^\eta} (\partial_z u_2 + \partial_y u_3)\left(\frac{z\partial_y \psi}{\eta} - \frac{z\psi \partial_y\eta}{\eta^2} \right)\\
    &&\quad + \left(\frac{4\mu}3 +\lambda \right)\int_{Q_T^\eta} \partial_z u_3 \frac{\psi}\eta - \left(\lambda - \frac{2\mu}3\right)\int_{Q_T^\eta} \left( u_1 \left(\frac{\partial_x \psi}{\eta}-\frac{\psi \partial_x \eta}{\eta^2}\right)+   
    u_2 \left(\frac{\partial_y \psi}{\eta}-\frac{\psi \partial_y \eta}{\eta^2}\right)    \right)
\end{eqnarray*}
by partial integration, where we used $u_1,u_2 \in L^2(0,T; H_0^1(\Omega^\eta))$, and since
\begin{eqnarray*}
	\int_{Q_T^\eta} \partial_z u_3 \frac{\psi}\eta = \int_{G_T} \int_0^{\eta} \partial_z u_3 \frac{\psi}\eta = \int_{G_T} u_3\big|_{z=0}^{z=\eta} \frac{\psi}\eta = \int_{G_T} \frac{\partial_t \eta}{\eta} \psi = \int_{G_T} \partial_t \ln\eta ~\psi,
\end{eqnarray*}
we finally obtain
\begin{eqnarray}
	&&\int_{G_T}F\psi + \int_{G_T} F_{con,\delta}\psi + \int_{Q_T^\eta} \rho^\gamma \frac{\psi}\eta +\int_{Q_T^\eta}\rho |u_3|^2 \frac{\psi}\eta -  \left(\frac{4\mu}3 +\lambda \right)\int_{G_T} \partial_t \ln\eta ~\psi\nonumber \\
	&&=-\int_{Q_T^\eta} \rho u_3 \left( \frac{z\partial_t \psi}{\eta}- \frac{z\psi\partial_t \eta}{\eta^2} \right)  -\int_{Q_T^\eta} \rho u_1 u_3\left(\frac{z\partial_x \psi}{\eta} - \frac{z\psi \partial_x \eta}{\eta^2} \right)\nonumber \\
    &&\quad- \int_{Q_T^\eta} \rho u_2 u_3\left(\frac{z\partial_y \psi}{\eta} - \frac{z\psi \partial_y \eta}{\eta^2} \right) \nonumber\\
	&&\quad+\mu\int_{Q_T^\eta} (\partial_z u_1 + \partial_x u_3)\left(\frac{z\psi_x}{\eta} - \frac{z\psi \eta_x}{\eta^2} \right) + \mu\int_{Q_T^\eta} (\partial_z u_2 + \partial_y u_3)\left(\frac{z\psi_y}{\eta} - \frac{z\psi \eta_y}{\eta^2} \right)  \nonumber\\
    &&\quad - \left(\lambda - \frac{2\mu}3\right)\int_{Q_T^\eta} \left( u_1 \left(\frac{\partial_x \psi}{\eta}-\frac{\psi \partial_x \eta}{\eta^2}\right)+   
    u_2 \left(\frac{\partial_y \psi}{\eta}-\frac{\psi \partial_y \eta}{\eta^2}\right)    \right) \nonumber\\
	&&\quad+\int_{G_T}\Delta\eta \Delta\psi -\int_{G_T} \partial_t \eta \partial_t \psi \nonumber \\
	&&\quad-\int_{\Omega^{\eta_{0,\delta}}}(\rho u_3)_0\frac{z\psi(0)}{\eta_{0,\delta}} - \int_\Gamma v_0 \psi(0). \label{eq:contact}
\end{eqnarray}
For a.a. $t\in (0,T)$, choosing $\psi = \chi_{\{0,t\}}$ by the density argument leads to
\begin{eqnarray}
	&&\int_0^t \int_\Gamma F^+ + \int_0^t \int_\Gamma F_{con,\delta}+\int_0^t \int_{\Omega^\eta } \frac{\rho^\gamma }\eta +\int_0^t \int_{\Omega^\eta} \frac{\rho |u_3|^2}\eta +\left(\frac{4\mu}3 +\lambda \right) \int_{\Gamma} \ln\eta^-  \nonumber\\
	&&=\int_0^t \int_{\Omega^\eta } \rho u_3 z \frac{\partial_t \eta}{\eta^2}  +\int_0^t \int_{\Omega^\eta } \rho u_1 u_3 \frac{z\partial_x \eta}{\eta^2} + \int_0^t \int_{\Omega^\eta } \rho u_2 u_3\frac{z \partial_y \eta}{\eta^2}\nonumber \\
	&&\quad-\mu\int_0^t \int_{\Omega^\eta } (\partial_z u_1 + \partial_x u_3) \frac{z\partial_x \eta}{\eta^2}  -\mu\int_0^t \int_{\Omega^\eta } (\partial_z u_2 + \partial_y u_3) \frac{z \partial_y \eta}{\eta^2} \nonumber \\
    &&\quad +\left(\lambda - \frac{2\mu}3\right)\int_{Q_T^\eta} \left( u_1 \frac{ \partial_x \eta}{\eta^2}+   
    u_2 \frac{\partial_y \eta}{\eta^2}    \right) \nonumber\\
	&&\quad + \left(\frac{4\mu}3 +\lambda \right)\int_{\Gamma} \ln\eta^+  - \left(\frac{4\mu}3 +\lambda \right)\int_{\Gamma} \ln \eta_{0,\delta} +\int_0^t \int_\Gamma F^-  \nonumber\\
	&&\quad +\int_{\Omega^\eta(t)}\rho(t)u_3(t) \frac{z}{\eta(t)}-\int_{\Omega^{\eta_{0,\delta}}}(\rho u_3)_{0,\delta}\frac{z}{\eta_{0,\delta}} +\int_\Gamma \partial_t \eta(t) -\int_\Gamma v_0.\label{ineq:1} 
\end{eqnarray}
In order to obtain new estimates, it is enough to bound the right-hand side as the left-hand side is positive. We start with
\begin{eqnarray}
	&&\left|\int_0^t \int_{\Omega^\eta} \rho u_3  z \frac{\partial_t \eta}{\eta^2} \right|
	\leq \|\rho\|_{L^\infty(0,T; L^\gamma(\Omega^\eta))} \|\bb{u}\|_{L^2(0,T; L^p(\Omega^\eta))} \left\|\frac{\partial_t \eta}\eta \right\|_{L^2(0,T; L^2(\Omega^\eta))}\nonumber \\
	&& \leq C\|\rho\|_{L^\infty(0,T; L^\gamma(\Omega^\eta))}^\gamma + \|\bb{u}\|_{L^2(0,T; L^p(\Omega^\eta))}^p +\int_0^T\int_\Gamma\underbrace{\int_0^\eta \frac{|\partial_t \eta|^2}{\eta^2}}_{=\frac{|\partial_t \eta|^2}{\eta}} \nonumber\\
	&&\leq C+ C(1+\|\eta\|_{L^\infty((0,T)\times \Gamma)})^p\|\bb{u}\|_{L^2(0,T; H^1(\Omega^\eta))}^p + \|\partial_z u_3\|_{L^2(0,T; L^2(\Omega^\eta))}^2\nonumber\\
		&&\leq C, \label{bound1}
\end{eqnarray}
for $\gamma>3$ and $p = \frac{2\gamma}{\gamma-2}<6$, where we used $\frac{z}\eta \leq 1$ and Lemma $\ref{lem:etat}$. Next, noticing that
\begin{eqnarray*}
	\|\sqrt{\rho} \bb{u}\|_{L^2(0,T; L^{\frac{2p\gamma}{p+2\gamma}}(\Omega^\eta))} \leq \|\sqrt{\rho}\|_{L^\infty(0,T; L^{2\gamma}(\Omega^\eta))} \| \bb{u}\|_{L^2(0,T; L^p(\Omega^\eta))} \leq C, 
\end{eqnarray*}
we can bound
\begin{eqnarray}
	&&\left|\int_0^t \int_{\Omega^\eta}\rho u_1 u_3 \frac{z \partial_x \eta}{\eta^2} + \int_0^t \int_{\Omega^\eta} \rho u_2 u_3\frac{z \partial_y \eta}{\eta^2}\right| \nonumber\\
	&&\leq \int_0^t \int_{\Omega^\eta} \frac{z}{\eta}\frac{\sqrt{\rho}|u_3|}{\sqrt{\eta}}\sqrt{\rho} |\bb{u}| \frac{|\nabla\eta|^{\frac23}}{\sqrt{\eta}} |\nabla\eta|^{\frac13} \nonumber\\
	&&\leq \left(\int_0^t \int_{\Omega^\eta} \frac{\rho |u_3|^2}{\eta} \right)^{\frac12}  \underbrace{\|\sqrt{\rho} \bb{u}\|_{L^2(0,T; L^{\frac{2p\gamma}{p+2\gamma}}(\Omega^\eta))}}_{\leq C} \nonumber\\
    &&\quad \times\Bigg\|\Bigg(\int_\Gamma \underbrace{\int_0^\eta 
	\frac{|\nabla\eta|^4}{\eta^3}}_{= \frac{|\nabla\eta|^4}{\eta^2}}\Bigg)^{\frac16}\Bigg\|_{L^\infty(0,T)} \underbrace{\| ~|\nabla\eta|^{\frac13}\|_{L^\infty(0,T; L^q(\Omega^\eta))}}_{\leq C\|\Delta \eta\|_{L^\infty(0,T; L^2(\Gamma))}^{\frac13}} \nonumber \\
	&&\leq \frac12\int_0^t \int_{\Omega^\eta}\frac{\rho |u_3|^2}{\eta} + C,  \label{bound2}
\end{eqnarray}
for $\gamma>3$ and $6>p> \frac{2\gamma}{\gamma-2}$ and $q=q(p,\gamma)$ large enough, where we used Lemma $\ref{eta:n}$ with $s=2$. We proceed to the viscous term
    \begin{eqnarray}
		&&\left|\mu\int_0^t \int_{\Omega^\eta}(\partial_z u_1 + \partial_x u_3) \frac{z\eta_x}{\eta^2}+ \mu\int_0^t \int_{\Omega^\eta} (\partial_z u_2 + \partial_y u_3) \frac{z \eta_y}{\eta^2}\right|\nonumber \\ 
        &&\leq \left(\int_{Q_T^\eta} \mathbb{S}(\nabla\bb{u}):\nabla \bb{u} \right) \left(\int_0^t \int_{\Omega^\eta} \frac{|\nabla\eta|^2}{\eta^2} \right)^{1/2}\nonumber\\
		&&\leq C \left(\int_0^t\int_{\Gamma} \int_0^{\eta} \frac{|\nabla\eta|^2}{\eta^2} \right)^{1/2} = C \left( \int_0^t\int_{\Gamma}  \frac{|\nabla\eta|^2}{\eta} \right)^{1/2}\nonumber \\
		&&\leq C \Big(\underbrace{\int_0^t\int_{\Gamma} |\Delta\eta|}_{\leq Ct}\Big)^{1/2} \leq C, \label{bound3}
	\end{eqnarray}
by Lemma $\ref{eta:n}$ for $s=1$, while similarly
\begin{eqnarray}
    &&\left(\lambda - \frac{2\mu}3\right)\int_{Q_T^\eta} \left( u_1 \frac{ \partial_x \eta}{\eta^2}+   
    u_2 \frac{\partial_y \eta}{\eta^2}    \right) \nonumber \\
    &&\leq \left(\lambda - \frac{2\mu}3\right)\left\| \frac{\bb{u}}{\eta}\right\|_{L^2(Q_T^\eta)} \left(\int_0^t \int_{\Omega^\eta} \frac{|\nabla\eta|^2}{\eta^2} \right)^{1/2} \leq C, \label{bound4}
\end{eqnarray}
by Lemma $\ref{lem:etat}$. Since $\ln\eta^+ \leq \eta$, while the rest of the terms can be bounded similarly, we obtain that the right-hand side of $\eqref{ineq:1}$ is bounded. This implies the uniform boundedness
\begin{eqnarray*}
	\ln\eta \in C([0,T]; L^1(\Gamma)), \quad \frac{\rho^\gamma}\eta, \frac{\rho |u_3|^2}{\eta} \in L^1(Q_T^\eta).
\end{eqnarray*}
	
\subsection{Passing to the limit $\delta\to 0$}
Let us denote the approximate solutions to the problem with contact force as $(\rho_\delta,\bb{u}_\delta,\eta_\delta)$ and the weak limits corresponding to a converging subsequence as:
    \begin{eqnarray*}
         & \eta_\delta \rightharpoonup \eta& \quad \text{ weakly* in }\quad W^{1,\infty}(0,T; L^2(\Gamma))\cap L^\infty(0,T; H^2(\Gamma)),\\
        &\rho_\delta\chi_{|\Omega^{\eta_\delta}} \rightharpoonup \rho \chi_{|\Omega^{\eta}}&\quad \text{ weakly* in }\quad L^\infty(0,T; L^\gamma( \mathbb{R}^3)), \\
         &\bb{u}_\delta\chi_{|\Omega^{\eta_\delta}} \rightharpoonup \bb{u}\chi_{|\Omega^{\eta}}&\quad \text{ weakly in }\quad L^2((0,T)\times \mathbb{R}^3), \\
        &\nabla\bb{u}_\delta\chi_{|\Omega^{\eta_\delta}} \rightharpoonup \nabla\bb{u}\chi_{|\Omega^{\eta}}&\quad \text{ weakly in }\quad L^2((0,T)\times \mathbb{R}^3).
    \end{eqnarray*}
At this point, the convergence $\delta\to 0$ is similar to previous convergences, since most of arguments can still be localized. However, as the weak convergence of the pressure depends on the domain, this part is different, as the fluid domain can degenerate. Surprisingly so, the estimate $\frac{\rho_{\delta}^\gamma}{\eta_\delta}\in L^1(Q_T^{\eta_\delta})$ actually ensures that there will be no pressure concentrations:\footnote{It is actually enough to prove that pressure converges weakly away from contact, as test functions in $\eqref{momeqweak}$ vanish on $\{z=0\}$. However, the following result is another useful consequence of the contact inequality worth mentioning that can have further applications, as it tells us that the pressure near contact region is in a sense regularized by the plate.}
\begin{lem}\label{weak:conv:pr:delta}
There exists $\overline{p}\in L^1(Q_T^\eta)$ such that 
    \begin{eqnarray*}
     \lim_{\delta\to 0}\int_{Q_T^{\eta_{\delta}}}\rho_{\delta}^\gamma \varphi \to \int_{Q_T^\eta}\overline{p} \varphi
    \end{eqnarray*}
for any $\varphi\in C^\infty([0,T]\times \mathbb{R}^3)$.
\end{lem}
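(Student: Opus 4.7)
The plan is to adapt the proof of Lemma \ref{weak:conv:pr} to the present setting, using the new estimate $\frac{\rho_\delta^\gamma}{\eta_\delta}\in L^1(Q_T^{\eta_\delta})$ (which is uniform in $\delta$ by \eqref{ineq:1} and the bounds \eqref{bound1}--\eqref{bound3}) to handle the extra difficulty that now arises, namely that $\eta_\delta$ can approach zero along a subsequence so that the fluid domain may pinch off near $\{z=0\}$. Since $\eta_\delta\to\eta$ uniformly in $C^{0,\alpha}([0,T]\times\Gamma)$ by Lemma/Theorem AmannGlasnik applied to the uniform bounds on $\eta_\delta$, for any open set $Q=I\times O$ with $\overline{Q}\subset Q_T^\eta$ (hence $\eta>\alpha_Q>0$ on a neighborhood of $\overline{Q}$) we have $Q\subset Q_T^{\eta_\delta}$ for all $\delta$ small enough, so the local framework of Lemma \ref{weak:conv:pr} applies without change.

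On such a localization $Q$ we reproduce verbatim the Bogovski\u{\i}-type test with $\psi\nabla\Delta_{O'}^{-1}\rho_\delta$ in the momentum equation \eqref{momeqweak:delta} (the contact force $F_{con,\delta}$ is supported on $\{\eta_\delta=\delta\}$, which is disjoint from a neighborhood of $Q$ for small $\delta$, so it contributes nothing locally), obtaining a uniform bound $\rho_\delta\in L^{\gamma+1}(Q)$. This yields $\rho_\delta^\gamma\rightharpoonup\overline{p}$ weakly in $L^{(\gamma+1)/\gamma}(Q)$ and, by exhausting $Q_T^\eta$ with such sets, defines $\overline{p}\in L^1(Q_T^\eta)$ locally; uniform local integrability prevents pressure defects away from $\partial Q_T^\eta$.

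To rule out concentrations at the boundary of $Q_T^\eta$, let $\varphi\in C^\infty([0,T]\times\mathbb{R}^3)$ and, for $\kappa>0$, split the pressure integral into contributions from the ``bulk'' region $B_\delta^\kappa:=\{(t,x,y,z):\eta_\delta(t,x,y)>\kappa,\ \kappa<z<\eta_\delta(t,x,y)-\kappa\}$, the ``top layer'' $T_\delta^\kappa:=\{\eta_\delta>\kappa,\ \eta_\delta-\kappa\le z<\eta_\delta\}\cup\{\eta_\delta>\kappa,\ 0<z<\kappa\}$ (where $\eta_\delta$ stays bounded below), and the ``contact layer'' $C_\delta^\kappa:=\{\eta_\delta\le\kappa,\ 0<z<\eta_\delta\}$. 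On $B_\delta^\kappa$ the local weak convergence from the previous paragraph identifies the limit with $\int\overline{p}\,\varphi$ up to an error that vanishes with $\kappa$ (using $\eta\in L^\infty$ and dominated convergence against $\overline{p}\in L^1$). On $T_\delta^\kappa$ the exact argument from Lemma \ref{weak:conv:pr} with the test function $\boldsymbol{\varphi}_\delta^\kappa \bb{e}_z$ applies (restricting to $\{\eta_\delta>\kappa\}$, where the denominators $\eta_\delta-2\kappa$ are safely bounded away from zero), giving
\begin{equation*}
    \int_0^T\int_{\{\eta_\delta>\kappa\}}\Big(\int_0^\kappa+\int_{\eta_\delta-\kappa}^{\eta_\delta}\Big)\rho_\delta^\gamma \le C(\kappa^{1-s}+\kappa),\qquad s<1,
\end{equation*}
uniformly in $\delta$. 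Finally, the new contact-layer control is simply
\begin{equation*}
    \int_0^T\int_{\{\eta_\delta\le\kappa\}}\int_0^{\eta_\delta}\rho_\delta^\gamma = \int_0^T\int_{\{\eta_\delta\le\kappa\}}\eta_\delta\int_0^{\eta_\delta}\frac{\rho_\delta^\gamma}{\eta_\delta}\,dz\,dxdydt\le\kappa\Big\|\tfrac{\rho_\delta^\gamma}{\eta_\delta}\Big\|_{L^1(Q_T^{\eta_\delta})}\le C\kappa,
\end{equation*}
which is precisely the point where the contact inequality \eqref{ver:ineq} is used decisively.

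Passing to the limit first in $\delta$ and then in $\kappa$ in the three-piece decomposition identifies
\begin{equation*}
    \lim_{\delta\to 0}\int_{Q_T^{\eta_\delta}}\rho_\delta^\gamma\varphi = \int_{Q_T^\eta}\overline{p}\,\varphi,
\end{equation*}
and $\overline{p}\in L^1(Q_T^\eta)$ by Fatou applied to the uniform $L^1$ bound $\|\rho_\delta^\gamma\|_{L^1(Q_T^{\eta_\delta})}\le C$ from \eqref{coupled:en:app:eps}. The main obstacle is the contact-layer estimate: without the bound on $\rho^\gamma/\eta$ from the contact inequality, one would have no way of preventing pressure from concentrating on the set $\{\eta=0\}\cap\{z=0\}$, where the standard Bogovski\u{\i} and $\boldsymbol{\varphi}_\delta^\kappa$ arguments both break down since the denominator $\eta_\delta$ vanishes. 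It is precisely the new integrability $\rho_\delta^\gamma/\eta_\delta\in L^1$ that ``regularizes'' the pressure in the contact region and turns what would otherwise be an obstruction into an estimate of order $\kappa$.
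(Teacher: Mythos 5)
Your proposal follows the same strategy as the paper: localize the Bogovski\u{\i}/effective-flux machinery of Lemma \ref{weak:conv:pr} away from contact, and kill the contribution of the near-contact region by writing $\rho_\delta^\gamma=\eta_\delta\cdot\frac{\rho_\delta^\gamma}{\eta_\delta}\leq \kappa\,\frac{\rho_\delta^\gamma}{\eta_\delta}$ there and invoking the uniform $L^1$ bound on $\frac{\rho_\delta^\gamma}{\eta_\delta}$ from the contact inequality. That contact-layer estimate is identical to the paper's and is indeed the decisive new point.

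The one step that does not work as written is your treatment of the ``top layer'' $T_\delta^\kappa$. You propose to run the no-concentration argument of Lemma \ref{weak:conv:pr} with the test function $\boldsymbol{\varphi}_\delta^\kappa\bb{e}_z$ ``restricted to $\{\eta_\delta>\kappa\}$''. First, on $\{\eta_\delta>\kappa\}$ the denominator $\eta_\delta-2\kappa$ is \emph{not} bounded away from zero --- it can even be negative --- so the middle branch of $\boldsymbol{\varphi}_\delta^\kappa$ is ill-defined there; the construction needs $\eta_\delta\geq 4\kappa$ or so, which is exactly why the original lemma fixes $\kappa<\delta/4$. Second, multiplying $\boldsymbol{\varphi}_\delta^\kappa$ by $\chi_{\{\eta_\delta>\kappa\}}$ produces a jump across the level set $\{\eta_\delta=\kappa\}$ (the profile does not vanish there), so the result is not an admissible $H^1$ test function, and smoothing the cutoff generates gradient terms concentrated on that level set which you cannot control for $\eta_\delta\in H^2$. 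The paper avoids this entirely: it applies the \emph{whole} of Lemma \ref{weak:conv:pr}, including its internal boundary-layer argument with its own small parameter, on open sets $Q=I\times O$ around points where $\eta>0$, where $\eta_\delta\geq c_Q>0$ uniformly for small $\delta$, and only afterwards splits off $\{\eta_\delta<\kappa\}$. Reorganizing your middle and bulk layers into that single localized application --- which is what you already do for the Bogovski\u{\i} part --- repairs the argument, so the gap is technical rather than conceptual.
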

\begin{proof}
Fix $\varphi\in C^\infty([0,T]\times \mathbb{R}^3)$. First, since $\eta_{\delta} \to \eta$  in  $C^{0,\alpha}([0,T]\times \Gamma)$, for every $(t,x,y)\in G_T$ such that $\eta(t,x,y)>0$ there exists an open set $(t,x,y)\in Q:=I\times O\subset G_T$ and $\delta_Q>0$ such that $\eta_\delta\geq c_Q>0$ on $Q$ for every $\delta<\delta_Q$. We can thus localize the proof of Lemma $\ref{weak:conv:pr}$ on 
$Q\times\mathbb{R}^+\cap Q_T^{\eta_\delta}$ to obtain that
\begin{eqnarray*}
     \lim_{\delta\to 0}\int_I\int_{O}\int_{0}^{\eta_\delta}\rho_{\delta}^\gamma \varphi \to \int_I\int_{O}\int_{0}^{\eta}\overline{p} \varphi,
\end{eqnarray*}
where $\overline{p}\in L^1(Q_T^\eta)$ is once again determined locally as a weak limit of $\rho_{\delta}^\gamma$. As the choice of $(t,x,y)$ such that $\eta(t,x,y)>0$ was arbitrary, we can thus conclude that for every $\kappa>0$
\begin{eqnarray*}
    \lim_{\delta\to 0}\int_0^T\int_{\Gamma\cap \{\eta_\delta\geq \kappa\}}\int_{0}^{\eta_\delta}\rho_{\delta}^\gamma \varphi \to \int_0^T\int_{\Gamma\cap \{\eta\geq \kappa\}}\int_{0}^{\eta}\overline{p} \varphi.
\end{eqnarray*}
On the other hand, since $\frac{\rho_\delta^\gamma}{\eta_\delta}$ is uniformly bounded in $L^1(Q_T^{\eta_\delta})$ by $\eqref{ineq:1}$, one has
\begin{eqnarray*}
    \int_0^T\int_{\Gamma\cap \{\eta_\delta< \kappa\}}\int_{0}^{\eta_\delta}\rho_{\delta}^\gamma \varphi = \int_0^T\int_{\Gamma\cap \{\eta_\delta< \kappa\}}\int_{0}^{\eta_\delta}\frac{\rho_{\delta}^\gamma}{\eta_\delta} \eta_\delta \varphi \leq C\kappa,
\end{eqnarray*}
so
\begin{eqnarray*}
    &&\lim_{\delta\to 0}\int_{Q_T^{\eta_{\delta}}} \rho_{\delta}^\gamma \varphi  \\
    &&=\underbrace{\lim_{\delta\to 0}\int_0^T\int_{\Gamma\cap \{\eta_\delta\geq \kappa\}}\int_{0}^{\eta_\delta}\rho_{\delta}^\gamma \varphi}_{=\int_0^T\int_{\Gamma\cap \{\eta\geq \kappa\}}\int_{0}^{\eta}\overline{p} \varphi}  +\underbrace{\lim_{\delta\to 0}\int_0^T\int_{\Gamma\cap \{\eta_\delta< \kappa\}}\int_{0}^{\eta_\delta}\rho_{\delta}^\gamma \varphi}_{\leq C\kappa}.
\end{eqnarray*}
Passing to the limit $\kappa\to 0$, the conclusion follows.
\end{proof}
Finally, It is worth noting that the weak limit of contact force $F_{con,\delta}$ will be supported on $\{z=0\}$ and will not appear in $\eqref{momeqweak}$ as the test functions in $\eqref{momeqweak}$ vanish on $\{z=0\}$. 
\begin{rem}
    The limit of $F_{con,\delta}$ can actually be included in the left-hand side of the contact inequality $\eqref{ver:ineq}$ along with other defect measures. However, as measures lose their connection with original functions throughout the limit, it is not clear what part of the the defect would belong to contact force. Moreover, as the vertical fluid dissipation is quite strong, it may happen that $F_{con,\delta}\to 0$ as $\delta \to 0$. Whether or not the usage of contact force in the construction can be avoided by utilizing the vertical fluid dissipation is an interesting question. 
\end{rem}

\subsubsection{Passing to the limit in the contact inequality}
First, since $\ln\eta_\delta\in C([0,T]; L^1(\Gamma))$, while $\eta_\delta \to \eta \in C^{0,\alpha}([0,T]\times \Gamma)$ for any $\alpha\in (0,1/3)$, by Lemma \ref{ln:lemma} we obtain $\ln\eta \in C([0,T]; L^1(\Gamma))$ and
\begin{eqnarray*}
    \int_\Gamma \ln \eta(t)^- \leq \lim_{\delta\to 0} \int_\Gamma\ln\eta_\delta(t)^-, \quad \text{ for every } t\in [0,T].
\end{eqnarray*}
Next, for $b>0$, we have that $\rho_\delta^\gamma \to \rho^\gamma$ and $\frac1{\eta_\delta} \to \frac1\eta$ a.e. on $Q_T^\eta\cap\{z\geq b\}$, which together imply
	\begin{eqnarray*}
		\int_0^t \int_\Gamma \int_b^{\eta} \frac{\rho^\gamma}{\eta} = \lim_{\delta\to 0} \int_0^t \int_\Gamma \int_b^{\eta_\delta} \frac{\rho_\delta^\gamma}{\eta_\delta} \leq \lim_{\delta\to 0}  \int_0^t\int_{\Omega^{\eta_\delta}}\frac{\rho_\delta^\gamma}{\eta_\delta},
	\end{eqnarray*}
so passing to the limit $b\to 0$ gives us 
	\begin{eqnarray*}
        \int_0^t\int_{\Omega^\eta} \frac{\rho^\gamma}{\eta}  \leq \lim_{\delta\to 0} \int_0^t\int_{\Omega^{\eta_\delta}}\frac{\rho_\delta^\gamma}{\eta_\delta}.
	\end{eqnarray*}
In order to pass to the limit on the right-hand side of $\eqref{eq:contact}$, let us prove that all the integrands are bounded in $L^p(Q_T^\eta)$ for some $p>1$, then we can identify the terms locally. We start with the convective term. It can be estimated in the same way as in $\eqref{bound2}$ but with larger $q$ to obtain
	\begin{eqnarray*}
		\rho u_{\delta,1} u_{\delta,3} \frac{z \partial_x \eta_{\delta}}{\eta_\delta^2} + \rho_\delta u_{\delta,2} u_{\delta,3}\frac{z \partial_y \eta_{\delta}}{\eta_\delta^2} \in L^1(0,T; L^p(\Omega^{\eta_\delta}(t))),
	\end{eqnarray*}
for some $p>1$ (but close to). In order to gain more time integrability, we can interpolate between 
	\begin{eqnarray*}
		\|\sqrt{\rho_\delta} \bb{u}_\delta\|_{L^2(0,T; L^{\frac{2p\gamma}{p+2\gamma}}(\Omega^{\eta_\delta}))} \leq \|\sqrt{\rho_\delta}\|_{L^\infty(0,T; L^{2\gamma}(\Omega^{\eta_\delta}))} \| \bb{u}_\delta\|_{L^2(0,T; L^p(\Omega^{\eta_\delta}))} \leq C
	\end{eqnarray*}
	and
	\begin{eqnarray*}
		\|\sqrt{\rho_\delta} \bb{u}_\delta\|_{L^\infty(0,T; L^{2}(\Omega^{\eta_\delta}))} \leq C
	\end{eqnarray*}
to obtain for $\theta \in (0,1)$
	\begin{eqnarray*}
		\|\sqrt{\rho_\delta} \bb{u}_\delta\|_{L^p(0,T; L^{q}(\Omega^{\eta_\delta}))} \leq \|\sqrt{\rho_\delta} \bb{u}_\delta\|_{L^2(0,T; L^{\frac{2p\gamma}{p+2\gamma}}(\Omega^{\eta_\delta}))}^{1-\theta} \|\sqrt{\rho_\delta} \bb{u}_\delta\|_{L^\infty(0,T; L^{2}(\Omega^{\eta_\delta}))}^\theta\leq C,
	\end{eqnarray*}
for some $p>2$ and $q<\frac{12\gamma}{6+2\gamma}$ (but close to), which is obtained by choosing $\theta$ close to zero. Then, with this better bound, estimating as in $\eqref{bound2}$ gives us
	\begin{eqnarray*}
		\rho u_{\delta,1} u_{\delta,3} \frac{z \partial_x \eta_{\delta}}{\eta_\delta^2} + \rho_\delta u_{\delta,2} u_{\delta,3}\frac{z \partial_y \eta_{\delta}}{\eta_\delta^2} \in L^r(Q_T^{\eta_\delta})
	\end{eqnarray*}
for some $r>1$. Similarly, for $\eqref{bound1}$, we can interpolate between
	\begin{eqnarray*}
		\|\rho_\delta \bb{u}_\delta\|_{L^2(0,T; L^{\frac{p\gamma}{\gamma+p}}(\Omega^{\eta_\delta}))} \leq  \|\rho_\delta\|_{L^\infty(0,T; L^\gamma(\Omega^{\eta_\delta}))} \|\bb{u}_\delta\|_{L^2(0,T; L^p(\Omega^{\eta_\delta}))} \leq C
	\end{eqnarray*}
and
	\begin{eqnarray*}
		\|\rho_\delta \bb{u}_\delta\|_{L^\infty(0,T; L^{\frac{2\gamma}{\gamma+1}}(\Omega^{\eta_\delta}))} \leq \|\sqrt{\rho_\delta}\|_{L^\infty(0,T; L^{2\gamma}(\Omega^{\eta_\delta}))} \|\sqrt{\rho_\delta} \bb{u}_\delta\|_{L^\infty(0,T; L^{2}(\Omega^{\eta_\delta}))} \leq C
	\end{eqnarray*}
	to obtain for $\theta\in (0,1)$
	\begin{eqnarray*}
		\|\rho_\delta \bb{u}_\delta\|_{L^s(0,T; L^{r}(\Omega^{\eta_\delta}))}\leq \|\rho_\delta \bb{u}_\delta\|_{L^2(0,T; L^{\frac{p\gamma}{\gamma+p}}(\Omega^{\eta_\delta}))}^{1-\theta} \|\rho_\delta \bb{u}_\delta\|_{L^\infty(0,T; L^{\frac{2\gamma}{\gamma+1}}(\Omega^{\eta_\delta}))}^{\theta}  \leq C,
	\end{eqnarray*}
for some $s>2$ and $r<\frac{6\gamma}{\gamma+6}$ (but close to), which is once again obtained by $\theta$ close to zero. Then, estimating as in $\eqref{bound1}$, we conclude
	\begin{eqnarray*}
		\rho_\delta u_{\delta,3}  z \frac{\partial_t \eta_{\delta}}{\eta_\delta^2}\in L^p(Q_T^{\eta_\delta})
	\end{eqnarray*}
for some $p>1$. As other terms are easier to bound, we conclude that all the integrands on the right-hand side of $\eqref{eq:contact}$ converge to functions (and not measures). \\
	
Now, in order to pass to the limit $\delta\to 0$, it is enough to identify each of the limiting integrands on every open set $Q=I\times O \subset Q_T^\eta$, where $I\subset (0,T)$ and $O \subset \Omega^\eta(t)$ for all $t\in I$. Fix such $Q$. First note that $\eta>0$ on $Q$, so by the convergence $\eta_\delta \to \eta$ in $C^{0,\alpha}([0,T]\times \Gamma)$ for any $\alpha\in (0,1/3)$, there is an $\delta_Q>0$ such that for all $\delta<\delta_Q$ we have $O\subset \Omega^{\eta_\delta}(t)$ for every $t\in I$ and $\eta_\delta\geq a>0$ on $Q$. As a direct consequence, one also has $\frac{1}\eta_\delta \to \frac1\eta$ in $L^\infty(Q)$. Moreover, by the uniform estimates and Aubin-Lions lemma, we have $\nabla\eta_\delta\to \nabla \eta$ on $L^\infty(I; L^p(O))$ for any $p<\infty$. Now, as in $\eqref{convrhouu}$
	\begin{eqnarray*}
		\rho_\delta \bb{u}_\delta\otimes\bb{u}_\delta \rightharpoonup \rho \bb{u} \otimes \bb{u} \quad \text{weakly in } L^q(Q)
	\end{eqnarray*}
for some $q>1$, so 
	\begin{eqnarray*}
		&&\rho_\delta \bb{u}_\delta\otimes\bb{u}_\delta:\nabla \left(\frac{z \bb{e}_z}{\eta_\delta} \right) \\
		&&=\rho u_{\delta,1} u_{\delta,3} \frac{z 
		\partial_x\eta_{\delta}}{\eta_\delta^2} + \rho u_{\delta,2} u_{\delta,3}\frac{z \partial_y \eta_{\delta}}{\eta_\delta^2}\rightharpoonup \rho u_1 u_3 \frac{z \partial_x \eta}{\eta^2} + \rho u_2 u_3\frac{z \partial_y \eta}{\eta^2}\quad \text{weakly in } L^r(Q)
	\end{eqnarray*}
for $q>r>1$. Next, from the continuity equation, we have
	\begin{eqnarray*}
		\partial_t \rho_{\delta} = - \nabla\cdot (\rho_\delta \bb{u}_\delta) \in L^2(I; H^{-1}(O))
	\end{eqnarray*}
so $\rho_\delta \in L^\infty(I; L^\gamma(O))\cap H^1(I; H^{-1}(O))\hookrightarrow\hookrightarrow L^2(I; H^{-1}(O))$, and since $\bb{u}_\delta \in L^2(I; H^1(O))$, we conclude $\rho_\delta \bb{u}_\delta \rightharpoonup \rho\bb{u}$ weakly in $L^2(I; L^{\frac{p\gamma}{p+\gamma}}(O))\hookrightarrow L^2(I; L^{2}(O))$, for $\gamma>3$ and $p = \frac{\gamma-2}{2\gamma}<6$. From the momentum equation, we have $\rho_\delta \bb{u}_\delta \in H^1(I; H^{-3}(O))$, while $\rho_\delta \bb{u}_\delta\in L^2(I; L^{2}(O))$ so $\rho_\delta\bb{u}_\delta \to \rho\bb{u}$ in $L^2(I; H^{-s}(O))$, for any $s>0$. Now, from the trace estimates given in Lemma $\ref{trace:lemma}$, we have $\partial_t \eta_{\delta}\in L^2(0,T; H^s(\Gamma))$, so $\partial_t \eta_{\delta}\rightharpoonup \partial_t \eta$ weakly in $L^2(0,T; H^s(\Gamma))$. Therefore, combining the above convergences one has
	\begin{eqnarray*}
		-\rho_\delta\bb{u}_\delta \cdot \partial_t \left(\frac{z \bb{e}_z}{\eta_\delta} \right) = \rho_\delta u_{\delta,3}  z \frac{\partial_t \eta_{\delta}}{\eta_\delta^2} \rightharpoonup \rho u_3  z \frac{\partial_t \eta}{\eta^2} \quad \text{ weakly in } L^1(Q).
	\end{eqnarray*}
The remaining convergences can be dealt with in a simpler fashion so we omit them. Thus, the proof of Theorem $\ref{main1}$ is complete.

\section{Detachment of all contact by pressure - proof of Theorem $\ref{main2}$}
\subsection{Case $F\in L^1(0,\infty; L^2(\Gamma))$}
Let $(\rho,\bb{u},\eta)$ be a given weak solution in the sense of Definition $\ref{weak:sol:def}$ and let us assume that $\min_{(x,y)\in\Gamma}\eta(t,x,y)=0$ for all $t$. First, note that for any function $f\geq 0$ with $f\in H^2(\Gamma)$ satisfies
	\begin{eqnarray}
		\max_{x\in \Gamma}f(x) \leq \min_{x\in \Gamma} f(x) + \max_{x,y \in \Gamma, x\neq y}\frac{|f(x)-f(y)|}{|x-y|^\alpha}\left|\frac{\text{diam}(\Gamma)}2\right|^\alpha  \leq \min_{x\in \Gamma} f(x)+ C\|\Delta f\|_{L^2(\Gamma)}\label{max:est}
	\end{eqnarray}
for any $\alpha\in (0,1/2)$. Therefore, since $\|\Delta\eta\|_{L^\infty(0,T; L^2(\Gamma))}\leq C$, there is a constant $H>0$ such that $\|\eta\|_{L^\infty((0,T)\times\Gamma)}\leq H$, which combined with uniform estimates coming from the energy inequality $\eqref{en:ineq}$, one has by Lemma $\ref{imbed}$ and Lemma $\ref{H1}$
	\begin{eqnarray*}
		\|\bb{u}\|_{L^2(0,T; L^p(\Omega^\eta))}\leq  C\|\bb{u}\|_{L^2(0,T; H^1(\Omega^\eta))} \leq C, \quad \text{for any } p\in [1,6),
	\end{eqnarray*}
where $C$ only depends on $p,\Gamma$ and initial energy. We will show that this will give us a contradiction.\\

Repeating the calculation as in $\eqref{bound1},\eqref{bound2},\eqref{bound3},\eqref{bound4}$ to bound the terms on the right-hand side of $\eqref{ver:ineq}$ and taking into consideration that we assumed $\|\eta\|_{L^\infty((0,T)\times\Gamma)}\leq H$, we obtain that 
	\begin{eqnarray*}
		\int_0^T\int_{\Omega^\eta}  \frac{\rho^\gamma}\eta \leq C(1+\sqrt{T}) - \int_0^T \int_\Gamma F. 
	\end{eqnarray*}
Now, in order to bound from below the left-hand side term, let us first calculate
	\begin{eqnarray*}
		&&m=\int_{\Omega^\eta(t)} \rho(t) \leq \left(\int_{\Omega^\eta(t)} 1^{\frac{\gamma}{\gamma-1}} \right)^{\frac{\gamma-1}{\gamma}} \left(\int_{\Omega^\eta(t)} \rho^\gamma(t) \right)^{\frac1\gamma} \leq \left(\int_{\Gamma} |\eta(t)|_\infty \right)^{\frac{\gamma-1}{\gamma}} \left(\int_{\Omega^\eta(t)} \rho^\gamma (t)\right)^{\frac1\gamma} \\
        &&=  (|\Gamma|~  |\eta(t)|_\infty)^{\frac{\gamma-1}{\gamma}}\left(\int_{\Omega^\eta(t)} \rho^\gamma(t) \right)^{\frac1\gamma}
	\end{eqnarray*}
for a.a. $t\in (0,T)$, where $|\eta(t)|_\infty:= \max_{(x,y)\in \Gamma} \eta(t,x,y)$, so
	\begin{eqnarray*}
		\int_{\Omega^\eta(t)} \rho^\gamma (t)  \geq \frac{m^\gamma}{(|\Gamma|~ |\eta(t)|_\infty)^{\gamma-1}}.
	\end{eqnarray*}
Therefore
	\begin{eqnarray*}
		\int_{\Omega^\eta(t)} \frac{ \rho^\gamma}\eta (t) \geq \frac{1}{|\eta(t)|_\infty}\int_{\Omega^\eta(t)} \rho^\gamma (t)\geq \frac{1}{|\eta(t)|_\infty}\frac{m^\gamma}{(|\Gamma|~ |\eta(t)|_\infty)^{\gamma-1}} = \frac{m^\gamma}{|\Gamma|^{\gamma-1}|\eta(t)|_\infty^{\gamma}}.
	\end{eqnarray*}
Combining the above estimates, by integrating over $(0,T)$, we obtain
	\begin{eqnarray*}
		&&\frac{1}{\max_{t\in(0,T)} |\eta(t)|_\infty^{\gamma}}\frac{Tm^\gamma}{|\Gamma|^{\gamma-1}} = \frac{1}{\max_{t\in(0,T)} |\eta(t)|_\infty^{\gamma}}\int_0^T \frac{m^\gamma}{|\Gamma|^{\gamma-1}}\\
		&&\leq \int_0^T\frac{m^\gamma}{|\Gamma|^{\gamma-1}|\eta(t)|_\infty^{\gamma}}\leq \int_0^T\int_{\Omega^\eta}  \frac{\rho^\gamma}\eta(t) \leq C(1+\sqrt{T})-\int_0^T\int_{\Gamma} F,
	\end{eqnarray*}
so
	\begin{eqnarray}
		\max_{t\in(0,T)} |\eta(t)|_\infty\geq && \left(\frac{Tm^\gamma}{|\Gamma|^{\gamma-1} C(1+\sqrt{T}) - |\Gamma|^{\gamma-1}\int_0^T \int_{\Gamma} F}\right)^{\frac1\gamma} \nonumber \\
		&&= \left(\frac{m^\gamma}{|\Gamma|^{\gamma-1} C\frac{(1+\sqrt{T})}{T} -|\Gamma|^{\gamma-1} \frac1T\int_0^T \int_{\Gamma} F}\right)^{\frac1\gamma} \to \infty \label{blowup}
	\end{eqnarray}
as $T\to \infty$, which is a contradiction. Consequently, there is a $T>0$ large enough such that $\eta(T)>0$. In particular, this implies that if we start from initial data with contact, at some point it will break.
	
	\begin{rem}
		Alternatively, we can avoid using the contradiction argument in this proof by tracking the dependence of the right-hand side of $\eqref{ver:ineq}$ on $\|\eta\|_{L^\infty((0,T)\times\Gamma)}$. This additional dependence would then increase the power of $\max_{t\in(0,T)} |\eta(t)|_\infty$ on the left-hand side of $\eqref{blowup}$, implying also that $ |\eta(t)|_\infty \nearrow \infty$ as $t\to\infty$ since the force $F\in L^1(0,\infty; L^2(\Gamma))$ loses its strength over time. On the other hand, by using the contradiction argument, the amount of detail is greatly reduced.
	\end{rem}

\subsection{Case $F\in L^1(\Gamma)$}	
Once again, let us assume that $\min_{(x,y)\in\Gamma}\eta(t,x,y)=0$ for all $t>0$. Then, due to $\eqref{max:est}$ we have $\|\eta(t)\|_{L^\infty(\Gamma)}\leq C\|\Delta \eta(t)\|_{L^2(\Gamma)}$. This means that we can obtain uniform estimates from the energy inequality $\eqref{en:ineq}$ by first estimating
\begin{eqnarray}
    &&\int_0^t \int_\Gamma F\partial_t \eta = \int_\Gamma F \eta(t) - \underbrace{\int_\Gamma F \eta_0}_{:=C_0} \nonumber \\
    &&\leq \|F\|_{L^1(\Gamma)} \|\eta(t)\|_{L^\infty(\Gamma)} - C_0 \nonumber\\
    &&\leq C \|F\|_{L^1(\Gamma)} \|\Delta \eta(t)\|_{L^2(\Gamma)} - C_0 \nonumber\\
    &&\leq\frac{C^2}2\|F\|_{L^1(\Gamma)}^2 + \frac12 \|\Delta \eta(t)\|_{L^2(\Gamma))} - C_0 \label{l1:case}
\end{eqnarray}
and then absorbing $\frac12\|\Delta \eta(t)\|_{ L^2(\Gamma)}$. Now, by assuming, say, $\|F\|_{L^1(\Gamma)}\leq E_0$, where 
\begin{eqnarray*}
    E_0:=\displaystyle{\int_{\Omega^{\eta_0}} \left( \frac{1}{2\rho_0} |(\rho\bb{u})_0|^2 + \frac{\rho_0^\gamma}{\gamma-1}   \right)} + \int_{\Gamma} \left(\frac{1}{2}|v_0|^2 + \frac{1}{2} |\Delta \eta_0|^2  \right)
\end{eqnarray*}
is the initial energy, we once again as in the previous case obtain $\|\eta\|_{L^\infty((0,T)\times\Gamma)}\leq H$. Let us first point out that if $\int_\Gamma F\geq 0$, the contact will break unconditionally due to $\eqref{blowup}$. Therefore let us assume that $\int_\Gamma F<0$. Then, the same calculation gives us
\begin{eqnarray*}
        \max_{t\in(0,T)} |\eta(t)|_\infty\geq && \left(\frac{Tm^\gamma}{|\Gamma|^{\gamma-1} C(1+\sqrt{T}) - |\Gamma|^{\gamma-1}\int_0^T \int_{\Gamma} F}\right)^{\frac1\gamma} \nonumber \\
		&&= \left(\frac{m^\gamma}{|\Gamma|^{\gamma-1} C\frac{(1+\sqrt{T})}{T} - |\Gamma|^{\gamma-1}\int_{\Gamma} F}\right)^{\frac1\gamma} \to \frac{m}{(-|\Gamma|^{\gamma-1}\int_\Gamma F)^{\frac1\gamma}} >H       
	\end{eqnarray*}
as $T\to \infty$, provided that
	\begin{eqnarray*}
		 \int_{\Gamma} F > -\frac{m^\gamma}{|\Gamma|^{\gamma-1}H^\gamma} \quad \text{ and } \quad \|F\|_{L^1(\Gamma)}\leq E_0,
	\end{eqnarray*}
which is a contradiction, implying $\eta(T)>0$ for some large enough $T$.

\section{Detachment of contact at a point by outer force - proof of Theorem $\ref{main3}$}
Denote $r = \sqrt{x^2+y^2}$. For a given $\kappa>0$, we first introduce a function 
\begin{eqnarray*}
    f_\kappa(x,y):= \begin{cases}
    \frac1{\pi\kappa^2}(\kappa-r),& \quad \text{ for } r\leq \kappa, \\
    0,& \quad \text{ otherwise},
    \end{cases}
\end{eqnarray*}
which satisfies
\begin{eqnarray*}
    \int_{\mathbb{R}^2} f_\kappa = \int_{\{r\leq \kappa\}} f_\kappa = 1.
\end{eqnarray*}
Now, let $(x_0,y_0) \in \Gamma$ and assume that $\eta(t,x_0,y_0)= 0$ for all $t\in (0,T)$ and let $F=f_\kappa^0 \sigma$ for $\sigma,\kappa>0$ with $f_\kappa^0(x,y):=f_\kappa(x-x_0,y-y_0)$. We calculate
\begin{eqnarray*}
    &&\int_0^t \int_\Gamma F \partial_t \eta = \int_\Gamma F \eta(t) - \int_\Gamma F \eta_0 \leq \int_\Gamma F \eta(t) \\
    &&\leq   \int_{B_\kappa(x_0,y_0)} f_\kappa^0\sigma C\|\Delta \eta(t)\|_{L^2(\Gamma)} \text{dist}((x_0,y_0),(x,y))^\alpha \\
    &&= C\sigma\|\Delta \eta(t)\|_{L^2(\Gamma)} \frac1{\pi}\int_0^{2\pi}\int_0^\kappa \left(\frac{1}{\kappa}- \frac{r}{\kappa^2} \right) r^\alpha \\
    &&=C\sigma\|\Delta \eta(t)\|_{L^2(\Gamma)} \frac2{(\alpha+1)(\alpha+2)}\kappa^{\alpha} \\
    &&\leq \frac12 \|\Delta \eta(t)\|_{L^2(\Gamma)}^2 + 2 \sigma^2 C^2 \frac{1}{(\alpha+1)^2(\alpha+2)^2} \kappa^{2\alpha},
\end{eqnarray*}
where $B_\kappa(x_0,y_0) = \{(x,y): \sqrt{(x-x_0)^2+(y-y_0)^2}\leq \kappa\}$, $C$ only depends on $\Gamma$ and $\alpha \in (0,1/2)$, where we have used the inequality $\eqref{max:est}$. Therefore, by choosing, say,
\begin{eqnarray*}
    \sigma^2 = \frac{(\alpha+1)^2(\alpha+2)^2}{2 C^2\kappa^{\alpha}}, \quad \kappa<1, \quad \alpha = 1/4,
\end{eqnarray*}
the energy is uniformly bounded and $F$ only depends on $\kappa$ so we can it as denote $F_\kappa$. Note that however $\int_\Gamma F_\kappa \to \infty$ as $\kappa\to 0$, which means that we can concentrate and increase the force near the point $(x_0,y_0)$ without increasing the energy. This happens because we assumed that the plate is "stuck" at $(x_0,y_0)$, meaning that applying stronger and stronger force near this point will not be able to break the contact.

\bigskip

By estimating as in Section $\ref{cont:sec}$, the contact inequality $\eqref{ver:ineq}$ for $t=T$ gives us 
\begin{eqnarray*}
    0<\int_\Gamma F_\kappa \leq \frac{C}T(1+\sqrt{T}).
\end{eqnarray*}
However, since $\int_\Gamma F_\kappa \to \infty$ as $\kappa\to 0$, we can choose $\kappa$ small enough such that this inequality is violated implying contradiction and consequently the existence of a $t\in (0,T)$ such that $\eta(t,x_0,y_0)>0$.

\appendix
    
\section{Some technical results}
Some of the following lemmas can be found in the literature. However, we choose to rewrite them in order to keep track on the dependence of constants on $\eta$, since this will be important in the calculation and since in our case the domain $\Omega^\eta$ can degenerate and even possibly consist of infinitely many disconnected components.

\begin{lem}\label{H1}
Let $\eta \in H^2(\Gamma)$ so that $\eta>0$ a.e. on $\Gamma$ and let $\bb{u}\in H_{0z}^1(\Omega^\eta)$ with $u_1,u_2\in H_0^1(\Omega^\eta)$. Then
	\begin{eqnarray*}
        \|\bb{u}\|_{H^1(\Omega^\eta)} \leq C(1+\|\eta\|_{L^\infty(\Gamma)}) \int_{\Omega^\eta} \mathbb{S}(\nabla \bb{u}):\nabla \bb{u},
	\end{eqnarray*}
where $C$ only depends on $\mu$.
\end{lem}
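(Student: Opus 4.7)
Plan: The proof is a combination of a Korn-type identity (made exact by the particular mix of boundary conditions) with a one-dimensional Poincar\'e inequality in the vertical direction. Note that from the way the lemma is invoked just below, where one reads $\|\bb{u}\|_{L^2(0,T; H^1(\Omega^\eta))}^2 \leq (1+\|\eta\|_\infty^2)\int_{Q_T^\eta}\mathbb{S}(\nabla\bb{u}):\nabla\bb{u}$, the stated inequality should be understood with a square on the left-hand side and a factor $(1+\|\eta\|_\infty^2)$, which is what I target.

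The key step is the integration-by-parts identity
\[
\int_{\Omega^\eta}\nabla\bb{u}:\nabla^\tau\bb{u} \,=\, \int_{\Omega^\eta}(\nabla\cdot\bb{u})^2.
\]
I would justify it by density: pick $u_1^n, u_2^n\in C_c^\infty(\Omega^\eta)$ converging to $u_1,u_2$ in $H_0^1(\Omega^\eta)$ (both sides of the identity are continuous in $H^1$, so it suffices to prove it for $\bb{u}^n:=(u_1^n, u_2^n, u_3)$). For such $\bb{u}^n$, integrate $\sum_{i,j}\partial_j u_i^n \partial_i u_j^n$ by parts twice via $\partial_j u_i\partial_i u_j = \partial_j(u_i\partial_i u_j) - \partial_i(u_i\,\nabla\cdot\bb{u}) + (\nabla\cdot\bb{u})^2$; every emerging surface integral contains at least one of $u_1^n,u_2^n$ or one of their horizontal derivatives as a factor, and since $u_1^n, u_2^n$ (and all their derivatives) vanish near $\partial\Omega^\eta$ by compact support, every boundary contribution is zero and one is left with $\int_{\Omega^\eta}(\nabla\cdot\bb{u}^n)^2$. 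Passing to the limit $n\to\infty$ gives the identity for $\bb{u}$.

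Granted the identity, expand $\mathbb{S}:\nabla\bb{u} = \mu|\nabla\bb{u}|^2 + \mu\,\nabla\bb{u}:\nabla^\tau\bb{u} + (\lambda-2\mu/3)(\nabla\cdot\bb{u})^2$ and substitute to conclude
\[
\int_{\Omega^\eta}\mathbb{S}(\nabla\bb{u}):\nabla\bb{u} \,=\, \mu\int_{\Omega^\eta}|\nabla\bb{u}|^2 + \Bigl(\lambda+\tfrac{\mu}{3}\Bigr)\int_{\Omega^\eta}(\nabla\cdot\bb{u})^2 \,\geq\, \mu\int_{\Omega^\eta}|\nabla\bb{u}|^2.
\]
Next, because $u_3\in H_{0z}^1(\Omega^\eta)$ and $u_1,u_2\in H_0^1(\Omega^\eta)$ both give $\bb{u}=0$ on the bottom, the fundamental theorem of calculus in $z$ with Cauchy-Schwarz yields, for a.e.\ $(x,y)\in\{\eta>0\}$, $\int_0^{\eta(x,y)}|\bb{u}(x,y,z)|^2\,dz\leq \tfrac{1}{2}\eta(x,y)^2\int_0^{\eta(x,y)}|\partial_z\bb{u}|^2\,ds$, and integration in $(x,y)$ gives $\|\bb{u}\|_{L^2(\Omega^\eta)}^2\leq \tfrac{1}{2}\|\eta\|_\infty^2\|\partial_z\bb{u}\|_{L^2(\Omega^\eta)}^2$. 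Adding this to the Korn bound gives the desired estimate with a constant depending only on $\mu$.

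The main obstacle is making the approximation rigorous in a domain that may be non-Lipschitz and even disconnected into (possibly infinitely many) components near $\{\eta=0\}$. The saving point is that the computation needs nothing beyond the density of $C_c^\infty(\Omega^\eta)$ in $H_0^1(\Omega^\eta)$ (which holds by definition, componentwise if needed), and the Poincar\'e step is purely one-dimensional in $z$, so it is indifferent to the horizontal geometry. Consequently, every constant that appears depends only on $\mu$ and $\|\eta\|_{L^\infty(\Gamma)}$, as required.
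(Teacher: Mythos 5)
Your proof is correct and follows essentially the same route as the paper's: approximate $u_1,u_2$ by compactly supported smooth functions, integrate by parts to convert $\int_{\Omega^\eta}\nabla^\tau\bb{u}:\nabla\bb{u}$ into $\int_{\Omega^\eta}(\nabla\cdot\bb{u})^2$ (all boundary terms vanish because each off-diagonal pair involves $u_1^n$ or $u_2^n$ or their derivatives), and then use the one-dimensional Poincar\'e inequality in $z$ from the no-slip condition at $z=0$. Your reading of the statement with squared norms and the factor $(1+\|\eta\|_{L^\infty(\Gamma)}^2)$ is also what the paper's own proof actually establishes and how the lemma is invoked afterwards.
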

\begin{proof}
Due to $u_1,u_2\in H_0^1(\Omega^\eta)$, there is a sequence of $u_1^\omega,u_2^\omega \in C_c^\infty(\Omega^\eta)$ such that $u_1^\omega \to u_1$ and $u_2^\omega \to u_2$ in $H_0^1(\Omega^\eta)$ as $\omega\to 0$. Therefore, noticing that
    \begin{eqnarray*}
     &&\int_{\Omega^\eta} \partial_z u_2^\omega\partial_y u_3 = \int_{\Omega^\eta} \partial_z u_3\partial_y u_2^\omega, \quad   \int_{\Omega^\eta} \partial_z u_1^\omega\partial_x u_3 = \int_{\Omega^\eta} \partial_z u_3\partial_x u_1^\omega, \\
     &&\int_{\Omega^\eta} \partial_x u_2^\omega\partial_y u_1^\omega = \int_{\Omega^\eta} \partial_x u_1^\omega\partial_y u_2^\omega,
    \end{eqnarray*}
and denoting $\bb{u}^\omega:= (u_1^\omega,u_2^\omega, u_3)$, we can calculate
\begin{eqnarray*}
    &&\int_{\Omega^\eta} \mathbb{S}(\nabla \bb{u}^\omega):\nabla \bb{u}^\omega = \int_{\Omega^\eta} \Bigg[\mu \left( \nabla \bb{u}^\omega + \nabla^\tau \bb{u}^\omega- \frac23\nabla \cdot \bb{u}^\omega \mathbb{I} \right) +\lambda\nabla \cdot \bb{u}^\omega \mathbb{I} \Bigg]:\nabla \bb{u}^\omega\\
    &&= \mu \|\nabla \bb{u}^\omega \|_{L^2(\Omega^\eta)}^2 + \left(\frac\mu3+\lambda \right)\|\nabla \cdot\bb{u}^\omega \|_{L^2(\Omega^\eta)}^2,
\end{eqnarray*}
which implies
    \begin{eqnarray*}
         \|\nabla \bb{u}^\omega\|_{L^2(\Omega^\eta)}^2 \leq C \int_{\Omega^\eta} \mathbb{S}(\nabla \bb{u}^\omega):\nabla \bb{u}^\omega,
    \end{eqnarray*}
where $C$ only depends on $\mu$, so by passing to the limit $\omega\to 0$ the following holds
	\begin{eqnarray*}
		\|\nabla \bb{u}\|_{L^2(\Omega_E^\eta)}^2 \leq C \int_{\Omega^\eta} \mathbb{S}(\nabla \bb{u}):\nabla \bb{u}.
	\end{eqnarray*}
On the other hand, for a.a. $(x,y)\in \Gamma$
	\begin{eqnarray*}
		|u_i(x,y,z)|^2 = \left|\int_0^z \partial_z u_i(x,y,s) ds \right|^2\leq  \left|\int_0^{\eta(x,y)} |\partial_z u_i(x,y,s)| ds \right|^2 \leq |\eta|_\infty \int_0^{\eta(x,y)} |\partial_z u_i(x,y,s)|^2,
	\end{eqnarray*}
so integrating over $\Omega^\eta$
	\begin{eqnarray*}
		\int_{\Omega^\eta}|\bb{u}|^2   \leq |\eta|_\infty^2 \int_{\Omega^\eta} |\partial_z \bb{u}|^2 \leq C |\eta|_\infty^2\int_{\Omega^\eta} \mathbb{S}(\nabla \bb{u}):\nabla \bb{u}.
	\end{eqnarray*}
Combining the above inequalities, the proof is finished.
\end{proof}

 Next, it will be useful to extend fluid velocity by zero to a larger connected domain:

\begin{lem}\label{ext:zero}
Let $\eta \in H^2(\Gamma)$ so that $\eta>0$ a.e. on $\Gamma$ and let $\bb{u}\in H_{0z}^1(\Omega^\eta)$. Then, extension by zero of $\bb{u}$ to $\Omega_E^\eta:= \Omega^\eta \cup \Gamma\times (-1,0)$ defined as 
    \begin{eqnarray*}
        e_0[\bb{u}]:= 
        \begin{cases}
        \bb{u},& \text{ on } \Omega^\eta, \\
         0, & \text{ on } \Gamma\times (-1,0),
         \end{cases}
    \end{eqnarray*}
satisfies
	\begin{eqnarray*}
        \|\bb{u}\|_{H^1(\Omega^\eta)}=\|e_0[\bb{u}]\|_{H^1(\Omega_E^\eta)}.
	\end{eqnarray*}
\end{lem}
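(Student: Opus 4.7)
The $L^2$ identity $\|e_0[\bb{u}]\|_{L^2(\Omega_E^\eta)} = \|\bb{u}\|_{L^2(\Omega^\eta)}$ is immediate, so the entire question reduces to showing that $e_0[\bb{u}] \in H^1(\Omega_E^\eta)$ with the expected gradient $e_0[\nabla\bb{u}]$. The plan is to produce, for each small $\delta>0$, a smoothly truncated version $\bb{u}_\delta$ of $\bb{u}$ which vanishes on an open neighborhood of $\Gamma\times\{0\}$ in $\Omega^\eta$, so that its extension by zero is trivially in $H^1(\Omega_E^\eta)$ with $\|e_0[\bb{u}_\delta]\|_{H^1(\Omega_E^\eta)} = \|\bb{u}_\delta\|_{H^1(\Omega^\eta)}$; then pass to the limit $\delta\to 0$ using completeness.

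Concretely, I would fix a smooth cutoff $\chi_\delta(z)$ with $\chi_\delta=0$ for $z\le \delta$, $\chi_\delta=1$ for $z\ge 2\delta$, and $|\chi_\delta'|\le C/\delta$, and set $\bb{u}_\delta := \chi_\delta \bb{u}$. The product rule gives $\nabla \bb{u}_\delta = \chi_\delta \nabla \bb{u} + \bb{u}\otimes (\chi_\delta'\bb{e}_z)$, and dominated convergence handles the first term immediately. The whole point of the lemma is the second term: I need to show
\begin{equation*}
\int_{\Omega^\eta} |\bb{u}|^2 |\chi_\delta'|^2 \;\le\; \frac{C}{\delta^2}\int_{\{\eta>0\}}\int_\delta^{2\delta} \mathbf{1}_{\{z<\eta\}}|\bb{u}|^2 \;\longrightarrow\; 0.
\end{equation*}
This is where the $H^1_{0z}$ hypothesis enters. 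By inner regularity, exhaust $\{\eta>0\}$ by an increasing sequence of closed sets $O_k\subset\{\eta>0\}$; on each $O_k\times\{0\}$ the trace $\gamma_{|O_k\times\{0\}}\bb{u}$ vanishes. Combining this with Fubini and the 1D absolute continuity of Sobolev functions, for a.a.\ $(x,y)\in \{\eta>0\}$ one has $\bb{u}(x,y,z) = \int_0^z \partial_z \bb{u}(x,y,s)\,ds$ for $z<\eta(x,y)$, so $|\bb{u}(x,y,z)|^2 \le z\int_0^z|\partial_z \bb{u}|^2$. Plugging this in yields $\frac{1}{\delta^2}\int_{\{\eta>0\}}\int_\delta^{2\delta}|\bb{u}|^2 \le C\int_{\{\eta>0\}}\int_0^{2\delta}|\partial_z \bb{u}|^2$, which tends to zero by dominated convergence because $\partial_z\bb{u}\in L^2(\Omega^\eta)$.

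With this convergence in hand, $\bb{u}_\delta \to \bb{u}$ in $H^1(\Omega^\eta)$; the family $e_0[\bb{u}_\delta]$ is then Cauchy in $H^1(\Omega_E^\eta)$ since $\|e_0[\bb{u}_{\delta}] - e_0[\bb{u}_{\delta'}]\|_{H^1(\Omega_E^\eta)} = \|\bb{u}_\delta - \bb{u}_{\delta'}\|_{H^1(\Omega^\eta)}$, and its $L^2$-limit is precisely $e_0[\bb{u}]$. Completeness of $H^1(\Omega_E^\eta)$ therefore gives $e_0[\bb{u}]\in H^1(\Omega_E^\eta)$ with $\|e_0[\bb{u}]\|_{H^1(\Omega_E^\eta)} = \lim_\delta \|\bb{u}_\delta\|_{H^1(\Omega^\eta)} = \|\bb{u}\|_{H^1(\Omega^\eta)}$.

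The main obstacle is purely the justification of the pointwise identity $\bb{u}(x,y,z) = \int_0^z \partial_z \bb{u}\,ds$ on $\{\eta>0\}$, because the domain $\Omega^\eta$ is in general not Lipschitz near the set $\{\eta=0\}$ and the set $\{\eta>0\}$ need not be open. This is exactly why the definition of $H^1_{0z}$ is phrased in terms of \emph{every} closed subset $O\subset\{\eta>0\}$: each such $O\times(0,\delta)$ is a nice slab where the standard $H^1$ trace theory applies and the pointwise boundary value coincides with the Sobolev trace, and then an exhaustion argument transfers the identity to almost every $(x,y)\in\{\eta>0\}$.
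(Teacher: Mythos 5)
Your argument is correct, but it proceeds along a genuinely different route from the paper. The paper verifies the distributional identity $\nabla e_0[\bb{u}]=e_0[\nabla\bb{u}]$ directly: it tests against $\varphi\in C_c^\infty(O)$ for open sets $O\subset\Omega_E^\eta$, integrates by parts across the interface, and observes that the boundary term over $O\cap(\Gamma\times\{0\})$ vanishes because the trace of $\bb{u}$ there is zero by the very definition of $H^1_{0z}$; the norm identity is then immediate. You instead approximate $\bb{u}$ by the vertically truncated functions $\chi_\delta\bb{u}$, whose zero-extensions are obviously in $H^1(\Omega_E^\eta)$, and control the commutator term $\bb{u}\otimes\chi_\delta'\bb{e}_z$ by the Hardy-type bound $|\bb{u}(x,y,z)|^2\le z\int_0^z|\partial_z\bb{u}|^2$, which follows from the one-dimensional absolute continuity on vertical slices together with the vanishing trace; completeness of $H^1(\Omega_E^\eta)$ then closes the argument. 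The paper's proof is shorter, but it is somewhat cavalier about performing the integration by parts on $O\cap\{z>0\}$ when $O$ meets the (possibly irregular) region near $\{\eta=0\}$; your slicing argument sidesteps that entirely, at the price of having to justify the fundamental-theorem-of-calculus representation $\bb{u}(x,y,z)=\int_0^z\partial_z\bb{u}\,ds$ for a.a.\ $(x,y)\in\{\eta>0\}$ — which you correctly identify as the crux and handle by exhausting the open set $\{\eta>0\}$ (recall $\eta\in H^2(\Gamma)\hookrightarrow C(\Gamma)$, so closed subsets are compact with $\min\eta>0$) by sets on which standard trace theory applies. Note that this representation is in any case used elsewhere in the paper (Lemmas \ref{H1} and \ref{lem:etat}), so your proof makes explicit a fact the paper relies on implicitly. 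Both proofs ultimately rest on the same input, the vanishing trace on $\{z=0\}$ encoded in $H^1_{0z}$.
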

\begin{proof}
Let $O\subset \Omega_E^\eta$ be an open set and let $\varphi \in C_c^\infty(O)$. Obviously, if $O\subset \Omega^\eta$ then
	\begin{eqnarray*}
		\int_O \nabla e_0[\bb{u}] \varphi = \int_O \nabla \bb{u} \varphi
	\end{eqnarray*}
and if $O \subset \Gamma\times (-1,0)$ then
	\begin{eqnarray*}
		\int_O \tilde{\nabla} \bb{u} \varphi = 0.
	\end{eqnarray*}
If $O \cap \Gamma\times \{0\}\neq \emptyset$, then
	\begin{eqnarray*}
		&&-\int_O e_0[\bb{u}] \cdot\nabla\varphi = \int_{O\cap \{z>0\}} \nabla e_0[\bb{u}] \varphi  + \underbrace{\int_{O\cap \{z<0\}} \nabla e_0[\bb{u}] \varphi}_{=0} - \int_{O \cap \Gamma\times \{0\}}\underbrace{\gamma_{|O\cap \{z=0\}} e_0[\bb{u}]}_{=\gamma_{|O\cap \{z=0\}} \bb{u}=0}\cdot\bb{e}_z \varphi \\
		&&=\int_{O\cap \{z>0\}} \nabla e_0[\bb{u}] \varphi = \int_{O\cap \{z>0\}} \nabla \bb{u} \varphi,
	\end{eqnarray*}
so $\nabla e_0[\bb{u}] \in L^2(\Omega_E^\eta)$ and 
	\begin{eqnarray*}
		\nabla e_0[\bb{u}] = \begin{cases}
  \nabla\bb{u},& \text{ on } \Omega^\eta, \\
  0, & \text{ on } \Gamma\times (-1,0).
 \end{cases}
    \end{eqnarray*}
The proof is thus finished.
\end{proof}

\begin{lem}\label{lem:etat}
Let $\eta \in H^2(\Gamma)$ so that $\eta>0$ a.e. on $\Gamma$ and let $\bb{u}\in H_{0z}^1(\Omega)$. Then
\begin{eqnarray*}
    \int_{\Gamma} \frac{|\gamma_{|\Gamma^\eta}\bb{u}|^2}{\eta} \leq \int_{\Omega^\eta} |\partial_z \bb{u}|^2, \\
     \int_{\Omega^\eta} \frac{|\bb{u}|^2}{\eta^2} \leq \int_{\Omega^\eta} |\partial_z \bb{u}|^2.
\end{eqnarray*}
\end{lem}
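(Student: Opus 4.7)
The plan is to reduce the inequality to a one-dimensional fundamental theorem of calculus on vertical slices, followed by Cauchy--Schwarz and Fubini. The slice computation is morally
\[
\gamma_{|\Gamma^\eta}\bb{u}(x,y)=\bb{u}(x,y,\eta(x,y))-\bb{u}(x,y,0)=\int_0^{\eta(x,y)}\partial_z\bb{u}(x,y,s)\,ds,
\]
for a.a.\ $(x,y)\in\Gamma$ with $\eta(x,y)>0$, using that the vertical boundary trace on $\Gamma\cap\{\eta>0\}$ vanishes. Once this is justified, Cauchy--Schwarz gives
\[
\frac{|\gamma_{|\Gamma^\eta}\bb{u}(x,y)|^2}{\eta(x,y)}\le \int_0^{\eta(x,y)}|\partial_z\bb{u}(x,y,s)|^2\,ds,
\]
and integrating in $(x,y)$ over $\Gamma$ yields the claim by Fubini.

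To justify the fundamental theorem of calculus step rigorously, first I would apply Lemma~\ref{ext:zero} to extend $\bb{u}$ by zero to $\Omega_E^\eta=\Omega^\eta\cup\Gamma\times(-1,0)$, obtaining $e_0[\bb{u}]\in H^1(\Omega_E^\eta)$. By the standard ACL characterization of Sobolev functions applied on $\Gamma\times(-1,M+1)\supset\Omega_E^\eta$ (after a harmless further extension by zero if needed), for a.a.\ $(x,y)\in\Gamma$ the slice $z\mapsto e_0[\bb{u}](x,y,z)$ admits an absolutely continuous representative on any interval contained in $\{z:(x,y,z)\in\Omega_E^\eta\}$, whose classical $z$-derivative coincides a.e.\ with $\partial_z e_0[\bb{u}](x,y,\cdot)$. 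Since $e_0[\bb{u}]\equiv 0$ on $\Gamma\times(-1,0)$, this representative is zero on $(-1,0)$, hence, by continuity across $z=0$ on the slices where $\eta(x,y)>0$, equals zero at $z=0$. The formula displayed above then holds pointwise on $[0,\eta(x,y)]$ for a.a.\ such $(x,y)$.

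The only subtle point, and what I expect to be the main obstacle, is handling the set where $\eta(x,y)=0$: on this set the integrand $|\gamma_{|\Gamma^\eta}\bb{u}|^2/\eta$ has to be interpreted. Since $\gamma_{|\Gamma^\eta}\bb{u}(x,y)=\hat\gamma_{|\Gamma^\eta}e_0[\bb{u}](x,y)=e_0[\bb{u}](x,y,\eta(x,y))$ and $e_0[\bb{u}]=0$ on $\Gamma\times(-1,0)$, one has $\gamma_{|\Gamma^\eta}\bb{u}=0$ a.e.\ on $\{\eta=0\}$ by the slice argument above (the ACL representative must vanish at $z=\eta(x,y)=0$ since it vanishes on $(-1,0)$). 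Thus the left-hand side can be taken to be zero there, by convention $0/0=0$, and the inequality restricts meaningfully to $\{\eta>0\}$. With the pointwise identity available on $\{\eta>0\}$, the Cauchy--Schwarz bound
\[
\bigl|\gamma_{|\Gamma^\eta}\bb{u}(x,y)\bigr|^2\le \eta(x,y)\int_0^{\eta(x,y)}|\partial_z\bb{u}(x,y,s)|^2\,ds
\]
is immediate, and dividing by $\eta(x,y)$ and integrating via Fubini over $\Gamma$ yields
\[
\int_\Gamma\frac{|\gamma_{|\Gamma^\eta}\bb{u}|^2}{\eta}\le \int_\Gamma\int_0^{\eta(x,y)}|\partial_z\bb{u}(x,y,s)|^2\,ds\,dx\,dy=\int_{\Omega^\eta}|\partial_z\bb{u}|^2,
\]
completing the argument.
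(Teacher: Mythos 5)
Your proposal is correct and follows essentially the same route as the paper's proof: the fundamental theorem of calculus on vertical slices (using the zero extension and the vanishing bottom trace to write $\gamma_{|\Gamma^\eta}\bb{u}=\int_0^\eta\partial_z\bb{u}\,dz$), then Cauchy--Schwarz and integration over $\Gamma$. The extra paragraph on the set $\{\eta=0\}$ is harmless but unnecessary, since $\eta>0$ a.e.\ on $\Gamma$ by hypothesis, so that set is already negligible for the integral.
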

\begin{proof}
For a.a. $(x,y)\in \Gamma$ we have $\eta(x,y)>0$ and $\int_{0}^{\eta(x,y)} \partial_z\bb{u} (x,y,z)dz = \gamma_{|\Gamma^\eta}\bb{u}(x,y)$ so we can estimate
	\begin{eqnarray*}
		&&|\gamma_{|\Gamma^\eta}\bb{u}(x,y)| =\left|\int_{0}^{\eta(x,y)} \partial_z \bb{u}(x,y,z) dz\right| \\
		&&\leq \left(\int_{0}^{\eta(x,y)} |\partial_z e_0[\bb{u}](x,y,z)|^2dz \right)^{1/2}\left(\int_{0}^{\eta(x,y)}1dz \right)^{1/2}\\
 &&= \left(\int_{0}^{\eta(x,y)} |\partial_z e_0[\bb{u}](x,y,z)|^2dz\right)^{1/2} \sqrt{\eta(x,y)}
	\end{eqnarray*}
and thus 
	\begin{eqnarray*}
		\frac{|\gamma_{|\Gamma^\eta}\bb{u}(x,y)|^2}{\eta(x,y)}  \leq  \int_{0}^{\eta(x,y)} |\partial_z \bb{u}(x,y,z)|^2dz.
	\end{eqnarray*}
Since this holds for a.a. $(x,y)\in \Gamma$, we can integrate over $\Gamma$ gives us the first inequality. Next, once again for a.a. $(x,y)\in \Gamma$ and a.a. $0<z<\eta(x,y)$ one has similarly
\begin{eqnarray*}
    \frac{|u(x,y,z)|^2}{\eta(x,y)} \leq  \int_{0}^{\eta(x,y)} |\partial_z \bb{u}(x,y,s)|^2ds,
\end{eqnarray*}
so
\begin{eqnarray*}
    \int_0^{\eta(x,y)}\frac{|u(x,y,z)|^2}{|\eta(x,y)|^2}dz \leq   \int_0^{\eta(x,y)}\frac{1}{\eta(x,y)}\left[\int_{0}^{\eta(x,y)} |\partial_z \bb{u} (x,y,s)|^2ds\right]dz = \int_{0}^{\eta(x,y)} |\partial_z \bb{u} (x,y,s)|^2ds.
\end{eqnarray*}
Integrating over $\Gamma$, the second inequality follows.
\end{proof}

\begin{lem}\label{imbed}
Let $\eta \in H^2(\Gamma)$ so that $\eta>0$ a.e. on $\Gamma$ and let $\bb{u}\in H_{0z}^1(\Omega^\eta)$. Then, we have
		\begin{eqnarray*}
             \|\bb{u}\|_{L^p(\Omega^\eta)}\leq C\|\bb{u}\|_{H^1(\Omega^\eta)}.
		\end{eqnarray*}
for any $p\in [1,6)$, where $C=C(\|1+\eta\|_{L^\infty(\Gamma)}, \|\Delta \eta\|_{L^2(\Gamma)},\Gamma,p)$.
\end{lem}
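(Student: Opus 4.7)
The plan is to extend $\bb{u}$ from $\Omega^\eta$ to a flat Lipschitz cylinder by reflection across the graph $\Gamma^\eta$ and then by zero, producing a function $\tilde{\bb{u}}$ on $\Omega_M := \Gamma\times(0,2M+1)$ (with $M=\|\eta\|_{L^\infty(\Gamma)}$), and to reduce the lemma to the classical Sobolev embedding on $\Omega_M$. Explicitly, I would set
\[\tilde{\bb{u}}(x,y,z):=\begin{cases}\bb{u}(x,y,z),& 0\le z\le\eta(x,y),\\\bb{u}(x,y,2\eta(x,y)-z),&\eta(x,y)\le z\le 2\eta(x,y),\\0,&2\eta(x,y)\le z.\end{cases}\]
Continuity in the $W^{1,q}$-sense across $\{z=\eta\}$ is automatic and across $\{z=2\eta\}$ it follows from $\bb{u}(x,y,0)=0$ on $\{\eta>0\}$ (which is the $H^1_{0z}$-condition, and noting that the reflected strip is empty where $\eta=0$). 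So in the reflected strip the distributional gradient is $\partial_x\tilde{\bb{u}}=\partial_x\bb{u}+2\partial_x\eta\,\partial_z\bb{u}$ evaluated at $2\eta-z$, analogously for $\partial_y$, and $\partial_z\tilde{\bb{u}}=-\partial_z\bb{u}$; a rigorous justification can be obtained by approximating $\eta$ by smooth $\eta_n\to\eta$ in $H^2(\Gamma)$ and passing to the limit.

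The extension is \emph{not} in $H^1(\Omega_M)$, because $\nabla\eta$ is not in $L^\infty$, but it will be in $W^{1,q}(\Omega_M)$ for every $q<2$. The only delicate term in $\|\nabla\tilde{\bb{u}}\|_{L^q(\Omega_M)}^q$ is the reflected contribution $\int_\Gamma |\nabla\eta|^q \int_0^\eta|\partial_z\bb{u}|^q\,dz\,dxdy$; by Hölder in $z$ this is bounded by $\|\eta\|_\infty^{1-q/2}\int_\Gamma |\nabla\eta|^q V^{q/2}\,dxdy$ with $V(x,y):=\int_0^\eta|\partial_z\bb{u}|^2\,dz\in L^1(\Gamma)$ and $\|V\|_{L^1}\le\|\bb{u}\|_{H^1(\Omega^\eta)}^2$, and then by Hölder in $(x,y)$ with conjugate exponents $\tfrac{2}{2-q}$ and $\tfrac{2}{q}$ one gets
\[\int_\Gamma |\nabla\eta|^q V^{q/2}\le \|\nabla\eta\|_{L^{2q/(2-q)}(\Gamma)}^q\,\|V\|_{L^1(\Gamma)}^{q/2}.\]
The 2D embedding $H^2(\Gamma)\hookrightarrow W^{1,r}(\Gamma)$ for every $r<\infty$, together with Poincaré on the torus, yields $\|\nabla\eta\|_{L^r(\Gamma)}\le C_{r,\Gamma}\|\Delta\eta\|_{L^2(\Gamma)}$, which closes the estimate with the correct constant dependence. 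The remaining contributions to $\|\tilde{\bb{u}}\|_{W^{1,q}(\Omega_M)}$ are bounded by $C(\|\eta\|_\infty,|\Gamma|,q)\|\bb{u}\|_{H^1(\Omega^\eta)}$ via Hölder and $|\Omega^\eta|\le|\Gamma|\|\eta\|_\infty$.

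Once $\|\tilde{\bb{u}}\|_{W^{1,q}(\Omega_M)}\le C(\|\eta\|_\infty,\|\Delta\eta\|_{L^2},\Gamma,q)\|\bb{u}\|_{H^1(\Omega^\eta)}$ is in hand, the classical Sobolev embedding on the Lipschitz cylinder gives $W^{1,q}(\Omega_M)\hookrightarrow L^{3q/(3-q)}(\Omega_M)$. Given $p\in[1,6)$, I pick $q\in(\max\{1,3p/(3+p)\},2)$, which is possible precisely because $p<6$, conclude that $\tilde{\bb{u}}\in L^p(\Omega_M)$, and restrict to $\Omega^\eta$ to obtain the desired bound. The main obstacle is the subcriticality balance: the Sobolev critical exponent $p=6$ is not accessible, since the reflection penalty due to $\nabla\eta\notin L^\infty$ forces us into $W^{1,q}$ with $q<2$, but the 2D $H^2\hookrightarrow W^{1,r}$ embedding provides exactly enough integrability of $\nabla\eta$ to absorb that penalty for any strictly subcritical $p$.
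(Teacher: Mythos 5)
Your proof is correct, and the core mechanism is the same as the paper's: because $\nabla\eta$ is only in $L^r(\Gamma)$ for $r<\infty$ (via $H^2(\Gamma)\hookrightarrow W^{1,r}(\Gamma)$) rather than $L^\infty$, one can only land in $W^{1,q}$ of a fixed Lipschitz domain for $q<2$, and the Sobolev embedding $W^{1,q}\hookrightarrow L^{3q/(3-q)}$ then sweeps out exactly $p\in[1,6)$. The technical device differs, though. The paper first extends $\bb{u}$ by zero \emph{downward} to $\Gamma\times(-1,0)$ (Lemma~\ref{ext:zero}, using the $H^1_{0z}$ condition) precisely so that the subsequent pull-back $A_\eta(x,y,z)=(x,y,(z+1)\eta+z)$ onto the fixed slab $\Gamma\times(-1,0)$ has non-degenerate Jacobian $J=1+\eta\geq 1$ even where $\eta$ vanishes; the loss of integrability then appears in the chain-rule term $\nabla e_0[\bb{u}]\cdot\nabla\eta$. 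You instead reflect \emph{upward} across $\Gamma^\eta$ and extend by zero above $\{z=2\eta\}$, avoiding any Jacobian or division by $\eta$ (the strip simply degenerates where $\eta$ is small), and the same loss appears in the term $\partial_x\eta\,\partial_z\bb{u}$; your Hölder bookkeeping for that term is correct and yields the same constant dependence. Each approach buys something: the paper reuses the zero-extension and the transform $A_\eta$ again in the trace lemma (Lemma~\ref{trace:lemma}), while yours is more self-contained and dodges the Jacobian weights. The one step you should flesh out is the claim that $\tilde{\bb{u}}$ has the stated distributional gradient, i.e.\ that no singular contribution arises on the graphs $\{z=\eta\}$ and $\{z=2\eta\}$, which are only $C^{0,\alpha}$; this requires an integration-by-parts verification in the spirit of Lemma~\ref{ext:zero} (for a.e.\ $(x,y)$ the map $z\mapsto\tilde{\bb{u}}(x,y,z)$ is absolutely continuous and vanishes at $z=2\eta$ by the $H^1_{0z}$ trace condition), and is comparable in delicacy to the chain rule for $e_0[\bb{u}]\circ A_\eta$ that the paper applies with $\eta$ only in $H^2$.
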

\begin{proof}
By Lemma $\ref{ext:zero}$, we can extend $\bb{u}$ by zero to $\Omega_E^\eta:= \Omega^\eta \cup \Gamma\times (-1,0)$ and denote by $e_0[\bb{u}]$, so we have $\|\bb{u}\|_{H^1(\Omega^\eta)}=\|e_0[\bb{u}]\|_{H^1(\Omega_E^\eta)}$. Then, we can map $e_0[\bb{u}]$ onto $\Omega_E:=\Gamma\times (-1,0)$ by coordinate transform
	\begin{eqnarray*}
        A_\eta:&&\Omega_E\to\Omega_E^\eta\nonumber\\
         &&(x,y,z)\mapsto(x,y, (z+1)\eta(x,y)+z).
	\end{eqnarray*}
Denoting $\hat{\bb{u}}(x,y,z):=e_0[\bb{u}](A_\eta(x,y,z))$, the Jacobain of transformation as $J:=1+\eta$ and the gradient on $\Omega_E$ as $\hat{\nabla}$, we have
		\begin{eqnarray*}
             &&\int_{\Omega_E}|\hat{\nabla}\hat{\bb{u}}|^q \leq \int_{\Omega_E^\eta}\frac1J J^{q}|\nabla e_0[\bb{u}]|^q |\nabla \eta|^q \leq 
            \|1+\eta \|_{L^\infty(\Gamma)}^{q-1} \|\nabla e_0[\bb{u}]\|_{L^2(\Omega_E^\eta)}^{q}\|\nabla\eta\|_{L^{\frac{q^2}{2-q}(\Omega_E^\eta)}}^q  \\
             &&\leq C\underbrace{\|\nabla e_0[\bb{u}]\|_{L^2(\Omega_E^\eta)}^{q}}_{ =\|\nabla \bb{u}\|_{L^2(\Omega^\eta)}^{q} } \|1+\eta\|_{L^\infty(\Gamma)}^{\frac{q^2-2q+2}{q}} \|\Delta\eta\|_{L^2(\Gamma)}^q,
		\end{eqnarray*}
for $q\in [1,2)$ with $C=C(q,\Gamma)$, where we used
	\begin{eqnarray*}
		&&\|\nabla\eta\|_{L^{\frac{q^2}{2-q}(\Omega_E^\eta)}} = \left(\int_{\Omega_E^\eta} |\nabla \eta|^{\frac{q^2}{2-q}} \right)^{\frac{2-q}{q^2}} = \left((1+\eta)\int_{\Gamma} |\nabla \eta|^{\frac{q^2}{2-q}} \right)^{\frac{2-q}{q^2}}\\
		&&\leq \|1+\eta\|_{L^\infty(\Gamma)}^{\frac{2-q}{q^2}} \|\nabla\eta\|_{L^{\frac{q^2}{2-q}(\Gamma)}} \leq C\|1+\eta\|_{L^\infty(\Gamma)}^{\frac{2-q}{q^2}} \|\Delta \eta\|_{L^2(\Gamma)}.
	\end{eqnarray*}
On the other hand
	\begin{eqnarray*}
		\int_{\Omega_E} |\hat{\bb{u}}|^q = \int_{\Omega_E^\eta}\frac1J |e_0[\bb{u}]|^q \leq\int_{\Omega_E^\eta}|e_0[\bb{u}]|^q, 
	\end{eqnarray*}
so combining the above inequalities we have
		\begin{eqnarray}
             \|\hat{\bb{u}}\|_{W^{1,q}(\Omega_E)}\leq C \left(1+\|1+\eta\|_{L^\infty(\Gamma)}^{\frac{q^2-2q+2}{q^2}}\|\Delta \eta\|_{L^2(\Gamma)} \right) \|\bb{u}\|_{H^1(\Omega^\eta)}.   \label{ineq:w1q}
		\end{eqnarray}
Finally, by Sobolev imbedding on $\Omega_E$
	\begin{eqnarray*}
		&&\left(\int_{\Omega_E^\eta} |e_0[\bb{u}]|^p \right)^{\frac1p}=\left(\int_{\Omega_E} J |\hat{\bb{u}}|^p \right)^{\frac1p} \leq \|1+\eta\|_{L^\infty(\Gamma)}^{\frac1p} \|\hat{\bb{u}}\|_{L^p(\Omega_E)} \leq C \|1+\eta\|_{L^\infty(\Gamma)}^{\frac1p} C \|\hat{\bb{u}}\|_{W^{1,q}(\Omega_E)} \\
		&& \leq C\|1+\eta\|_{L^\infty(\Gamma)}^{\frac1p}\left(1+(\|1+\eta\|_{L^\infty(\Gamma)}^{\frac{p^2-2p+2}{p^2}})\|\Delta \eta\|_{L^2(\Gamma)} \right) \|\bb{u}\|_{H^1(\Omega^\eta)}
	\end{eqnarray*}
for any $q\in[1,2)$ and $p=\frac{3q}{3-q}\in[1,6)$, so the proof is finished.
\end{proof}

\begin{lem}\label{trace:lemma}
Let $\eta \in H^2(\Gamma)$ so that $\eta>0$ a.e. on $\Gamma$ and let $\bb{u}\in H_{0z}^1(\Omega^\eta)$. Then, one has
	\begin{eqnarray}
  \|\gamma_{|\Gamma^\eta} \bb{u}\|_{H^s(\Gamma)} \leq C \| \bb{u}\|_{H^1(\Omega^\eta)} \label{trace1}
	\end{eqnarray}
for any $s\in (0,1/2)$, where $C=C(\|1+\eta\|_{L^\infty(\Gamma)}, \|\Delta \eta\|_{L^2(\Gamma)},\Gamma,s)$, and consequently
	\begin{eqnarray}
  \|\gamma_{|\Gamma^\eta} \bb{u}\|_{L^p(\Gamma)} \leq C \| \bb{u}\|_{H^1(\Omega^\eta)} \label{trace2}
	\end{eqnarray}
for any $p\in [1,4)$, where $C=C(\|1+\eta\|_{L^\infty(\Gamma)}, \|\Delta \eta\|_{L^2(\Gamma)},\Gamma,p)$.	
\end{lem}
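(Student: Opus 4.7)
The plan is to reduce the problem to the standard trace theorem on the fixed reference cylinder $\Omega_E = \Gamma \times (-1,0)$, by combining the zero-extension from Lemma \ref{ext:zero} with the coordinate flattening already used in the proof of Lemma \ref{imbed}. First I would extend $\bb{u}$ by zero to $\Omega_E^\eta = \Omega^\eta \cup \Gamma\times(-1,0)$, obtaining $e_0[\bb{u}] \in H^1(\Omega_E^\eta)$ with $\|e_0[\bb{u}]\|_{H^1(\Omega_E^\eta)}=\|\bb{u}\|_{H^1(\Omega^\eta)}$ by Lemma \ref{ext:zero}. Then I would pull back via the diffeomorphism $A_\eta(x,y,z) = (x,y,(z+1)\eta(x,y)+z)$ mapping $\Omega_E$ onto $\Omega_E^\eta$, setting $\hat{\bb{u}}(x,y,z) := e_0[\bb{u}](A_\eta(x,y,z))$. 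The key point is that $A_\eta$ maps $\Gamma\times\{0\}$ onto $\Gamma^\eta$, so the Lagrangian trace $\gamma_{|\Gamma^\eta}\bb{u}$ coincides (as a function on $\Gamma$) with the standard trace of $\hat{\bb{u}}$ on $\Gamma\times\{0\}$.

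Next, given $s \in (0,1/2)$, I would pick $q \in (1,2)$ close enough to $2$ so that the Sobolev embedding $W^{1-1/q,q}(\Gamma) \hookrightarrow H^s(\Gamma)$ holds on the $2$D torus (this requires $s \leq 2-3/q$, which forces $q > 3/(2-s)$, and such $q < 2$ is always available since $s < 1/2$). For this $q$, the inequality $\eqref{ineq:w1q}$ from the proof of Lemma \ref{imbed} already provides the controlled bound
\begin{eqnarray*}
    \|\hat{\bb{u}}\|_{W^{1,q}(\Omega_E)} \leq C\left(1+\|1+\eta\|_{L^\infty(\Gamma)}^{\frac{q^2-2q+2}{q^2}}\|\Delta\eta\|_{L^2(\Gamma)}\right)\|\bb{u}\|_{H^1(\Omega^\eta)}.
\end{eqnarray*}
The standard trace theorem on the Lipschitz product domain $\Omega_E$ then yields $\hat{\bb{u}}|_{\Gamma\times\{0\}} \in W^{1-1/q,q}(\Gamma)$ with norm controlled by $\|\hat{\bb{u}}\|_{W^{1,q}(\Omega_E)}$, and the chosen Sobolev embedding delivers $\eqref{trace1}$.

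For $\eqref{trace2}$, given $p \in [1,4)$ I would simply pick $s \in (0,1/2)$ close enough to $1/2$ that $s > 1 - 2/p$, so that the standard embedding $H^s(\Gamma) \hookrightarrow L^p(\Gamma)$ on the $2$D torus applies, and then invoke $\eqref{trace1}$. I do not expect any serious obstacle: the only delicate point is verifying that the trace on $\Gamma^\eta$ of the original $\bb{u}$ is identified correctly with the flat trace of $\hat{\bb{u}}$ (which must be checked first for smooth $\bb{u}$ with compact support in $\Omega^\eta\cup(\Gamma\times[-1,0))$ and then extended by density using that $A_\eta$ preserves $H^1$ up to the constants already tracked in Lemma \ref{imbed}), and that all constants remain of the stated form, which is automatic from $\eqref{ineq:w1q}$.
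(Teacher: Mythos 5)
Your proposal is correct and follows essentially the same route as the paper: extend by zero via Lemma \ref{ext:zero}, flatten with $A_\eta$, apply the bound \eqref{ineq:w1q} from the proof of Lemma \ref{imbed}, and then use the standard trace theorem on $\Omega_E$ followed by Sobolev embedding into $H^s(\Gamma)$ and $L^p(\Gamma)$. The only difference is that you spell out the exponent bookkeeping ($q>3/(2-s)$, $s>1-2/p$) that the paper leaves implicit in the phrase ``$p=p(s)<2$''.
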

\begin{proof}
Similarly as in Lemma $\ref{imbed}$, we can extend $\bb{u}$ by zero to $\Omega_E^\eta$ and denote by $e_0[\bb{u}]$, and then map onto a fixed domain $\Omega_E$ and denote the pull-back by $\hat{\bb{u}}$. Then, since $\hat{\bb{u}} \in W^{1,p}(\Omega_E)$ for any $p\in [1,2)$, the trace $\gamma_{|\{z=0\}} \hat{\bb{u}}$ satisfies
	\begin{eqnarray*}
		\|\gamma_{|\{z=0\}} \hat{\bb{u}}\|_{H^s(\Gamma)} \leq C \| \hat{\bb{u}}\|_{W^{1,p}(\Omega^\eta)}\leq C \|\bb{u}\|_{H^1(\Omega^\eta)}
	\end{eqnarray*}
by $\eqref{ineq:w1q}$, for any $s\in(0,1/2)$ and $p=p(s)<2$, where $C=C(\|1+\eta\|_{L^\infty(\Gamma)}, \|\Delta \eta\|_{L^2(\Gamma)},\Gamma,s)$. Since $\gamma_{|\{z=0\}} \hat{\bb{u}} = \gamma_{|\Gamma^\eta}\bb{u}=\gamma_{|\Gamma^\eta}e_0[\bb{u}]$ by definition, $\eqref{trace1}$ follows. Now, inequality $\eqref{trace2}$ follows directly by Sobolev imbedding.
\end{proof}
	
The following lemma follows the proof of \cite[Proposition 12]{global}, where similar result was obtained. However, here we generalize it to a 2D case and push the exponent to $\alpha = 1/2$ (i.e. we choose $b=\sqrt{\eta}$) to obtain what seems to be the optimal result:

\begin{lem}\label{eta:n}
Let $s\geq 1$ and let $\eta\in W^{1,2s}(\Gamma)\cap W^{2,s}(\Gamma)$ such that $\eta > 0$. Then
	\begin{eqnarray*}
  \int_\Gamma\frac{|\nabla \eta|^{2s}}{\eta^s} \leq C\int_\Gamma |\nabla^2 \eta|^s,
	\end{eqnarray*}
where $C$ only depends on $s$.
\end{lem}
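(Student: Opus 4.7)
The plan is to use the identity $\tfrac{\nabla\eta}{\eta^s}=-\tfrac{1}{s-1}\nabla(\eta^{1-s})$, valid for $s>1$, to integrate by parts and shift a derivative off of the singular weight $\eta^{-s}$ onto the milder $\eta^{1-s}$, at the price of producing a second derivative of $\eta$. The hint's choice $b=\sqrt\eta$ corresponds to the same scaling: $\tfrac{|\nabla\eta|^{2s}}{\eta^s}=4^s|\nabla b|^{2s}$ and $\eta^{1-s}=b^{2-2s}$ are both dimensionally adapted to the exponent $\alpha=1/2$. To justify the manipulations when $\eta$ touches zero, I would first replace $\eta$ by $\eta_\varepsilon:=\eta+\varepsilon$, prove the inequality for $\eta_\varepsilon$, and then pass to $\varepsilon\to 0$ by monotone convergence (since $\nabla\eta_\varepsilon=\nabla\eta$, $\nabla^2\eta_\varepsilon=\nabla^2\eta$, and $(\eta+\varepsilon)^{-s}\uparrow\eta^{-s}$ pointwise).

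For the regularized problem with $\eta>0$ and $s>1$, the key steps are: (i) combine the identity above with integration by parts on the torus to write
\begin{equation*}
    \int_\Gamma\frac{|\nabla\eta|^{2s}}{\eta^s}=\frac{1}{s-1}\int_\Gamma\nabla\cdot\bigl(|\nabla\eta|^{2s-2}\nabla\eta\bigr)\,\eta^{1-s};
\end{equation*}
(ii) compute the divergence explicitly and use the pointwise bound $|\nabla\cdot(|\nabla\eta|^{2s-2}\nabla\eta)|\leq C_s|\nabla\eta|^{2s-2}|\nabla^2\eta|$ to obtain
\begin{equation*}
    \int_\Gamma\frac{|\nabla\eta|^{2s}}{\eta^s}\leq\frac{C_s}{s-1}\int_\Gamma\frac{|\nabla\eta|^{2s-2}}{\eta^{s-1}}|\nabla^2\eta|;
\end{equation*}
(iii) recognize the identity $\tfrac{|\nabla\eta|^{2s-2}}{\eta^{s-1}}=\bigl(\tfrac{|\nabla\eta|^{2s}}{\eta^s}\bigr)^{(s-1)/s}$ and apply Hölder's inequality with conjugate exponents $\bigl(s/(s-1),\,s\bigr)$; (iv) divide both sides by the resulting factor $\bigl(\int\tfrac{|\nabla\eta|^{2s}}{\eta^s}\bigr)^{(s-1)/s}$ and raise to the $s$-th power, yielding the claimed inequality with a constant depending only on $s$.

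The main obstacle is the endpoint $s=1$, where the constant in the scheme above blows up: the substitute $\tfrac{\nabla\eta}{\eta}=\nabla\log\eta$ yields only $\int\tfrac{|\nabla\eta|^2}{\eta}=-\int\Delta\eta\log\eta$ after integration by parts, which cannot be bounded by $\int|\nabla^2\eta|$ alone without additional control on $\log\eta$. This endpoint therefore has to be treated by a separate argument (in the spirit of \cite{global}, where a strictly smaller exponent $\alpha<1/2$ was used), so the genuine improvement delivered by the choice $b=\sqrt\eta$ is confined to the range $s>1$.
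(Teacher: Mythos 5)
For $s>1$ your argument is correct and complete: the integration by parts against $\eta^{1-s}$, the pointwise bound $|\nabla\cdot(|\nabla\eta|^{2s-2}\nabla\eta)|\le C_s|\nabla\eta|^{2s-2}|\nabla^2\eta|$, the H\"older step with exponents $(s/(s-1),s)$ and the final absorption all go through, and the division in step (iv) is legitimate after the $\varepsilon$-regularization, which makes the left-hand side finite. In substance this is the same computation as the paper's: expanding $0=\int_\Gamma\nabla\cdot(b\nabla b\,|\nabla b|^{2s-2})$ with $b=\sqrt\eta$ and converting back to $\eta$ reproduces your identity, with the cross term $\int_\Gamma\Delta\eta\,|\nabla\eta|^{2s-2}\eta^{1-s}$ handled by Young instead of H\"older. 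Your version is the more transparent of the two.

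The gap is the endpoint $s=1$, which the lemma asserts ($s\ge 1$) and which the paper invokes in $\eqref{bound3}$; you flag it but do not close it, so as a proof of the stated lemma the proposal is incomplete. Your suspicion that the endpoint cannot be reached by this scheme is, however, well founded and in fact applies to the paper's own proof: tracking the constants in the identity $0=A+(2s-1)B+(2s-2)C$ used there, the prefactor of $\int_\Gamma|\nabla\eta|^{2s}\eta^{-s}$ works out to $\tfrac{2s-2}{4^s}$ rather than $\tfrac1{4^s}+\tfrac{2s-1}{16}$, so at $s=1$ the identity collapses to $\int_\Gamma\Delta\eta=0$ and carries no information. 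Moreover the $s=1$ inequality with a constant depending only on $s$ appears to be false: taking $\eta$ depending on $x$ only, equal to $1$ except on an interval where it descends with slope $-1$ to the level $\delta$, stays there, and climbs back (suitably smoothed), one gets $\int_\Gamma|\nabla\eta|^2\eta^{-1}\sim\ln(1/\delta)\to\infty$ while $\int_\Gamma|\nabla^2\eta|$ stays bounded as $\delta\to 0$. So the defect you identify is not a weakness of your method but of the statement at $s=1$; where the paper uses the lemma with $s=1$, the estimate can instead be recovered from the $s=2$ case (which your argument does cover) via Cauchy--Schwarz and $\|\nabla^2\eta\|_{L^2(\Gamma)}=\|\Delta\eta\|_{L^2(\Gamma)}$ on the torus.
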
\begin{proof}
Denote $b=\sqrt{\eta}$. By partial integration
	\begin{eqnarray*}
		&&0=\int_\Gamma \nabla\cdot(b \nabla b |\nabla b|^{2s-2}) = \int_\Gamma |\nabla b|^{2s} + \int_\Gamma  b \Delta b|\nabla b|^{2s-2}+ \int_\Gamma b\nabla b\cdot \nabla(|\nabla b|^{2s-2}).
	\end{eqnarray*}
First we calculate
	\begin{eqnarray*}
		\nabla(|\nabla b|^{2s-2}) = \nabla (|\nabla b|^2)^{s-1} = (s-1) (|\nabla b|^2)^{s-2} \nabla |\nabla b|^2,
	\end{eqnarray*}
and
    \begin{eqnarray*}
		\nabla b\cdot \nabla|\nabla b|^2 = 2\begin{bmatrix} b_x \\ b_y \end{bmatrix} \begin{bmatrix}
		b_{xx} b_x + b_{xy}b_x \\ b_{xy} b_y + b_yb_{yy} 
		\end{bmatrix} =2|\nabla b|^2 \Delta b + (4 b_x b_y b_{xy} - 2(b_x)^2 b_{yy} - 2 (b_y)^2 b_{xx} ),
	\end{eqnarray*}
so
	\begin{eqnarray*}
		\nabla b\cdot  \nabla(|\nabla b|^{2s-2}) = (s-1) (|\nabla b|^2)^{s-2} \left( 2|\nabla b|^2 \Delta b + (4 b_x b_y b_{xy} - 2(b_x)^2 b_{yy} - 2 (b_y)^2 b_{xx} )\right).
		\end{eqnarray*}
Therefore
	\begin{eqnarray*}
		0&=&  \int_\Gamma |\nabla b|^{2s}+ (2s-1) \int_\Gamma  b \Delta b|\nabla b|^{2s-2} \\
        &&+ (2s-2)\int_\Gamma b\left( 2 b_x b_y b_{xy} - (b_x)^2 b_{yy} -  (b_y)^2 b_{xx} \right)|\nabla b|^{2s-4}.
	\end{eqnarray*}
Expressing $b$, one has
	\begin{eqnarray*}
		\int_\Gamma |\nabla b|^{2s} =  \frac1{4^s}\int_\Gamma \frac{|\nabla \eta|^{2s}}{\eta^s}    
	\end{eqnarray*}
then
	\begin{eqnarray*}
		&&\int_\Gamma  b \Delta b|\nabla b|^{2s-2}\\
        &&= \int_\Gamma \sqrt{\eta} \left(\frac12 \frac{\Delta\eta}{\sqrt{\eta}} - \frac14\frac{|\nabla\eta|^2}{\eta^{3/2}} \right) \frac1{4^{s-1}}\frac{|\nabla \eta|^{2s-2}}{\eta^{s-1}} = \frac1{2^{2s-1}}\int_\Gamma \Delta\eta \frac{|\nabla \eta|^{2s}}{\eta^s} - \frac1{4^s}\int_\Gamma \frac{|\nabla \eta|^{2s}}{\eta^s}
	\end{eqnarray*}
and finally
	\begin{eqnarray*}
		&&\int_\Gamma b\left( 2 b_x b_y b_{xy} - (b_x)^2 b_{yy} -  (b_y)^2 b_{xx} \right)|\nabla b|^{2s-4} \\
         &&=\int_{\Gamma} \frac{1}{16\eta}\left(  2\eta_x \eta_y \eta_{xy}- \eta_x^2 \eta_{yy} - \eta_y^2 \eta_{xx} \right)\frac1{4^{s-2}}\frac{|\nabla \eta|^{2s-4}}{\eta^{s-2}},
	\end{eqnarray*}
which combined together imply
	\begin{eqnarray*}
		&&\left(\frac1{4^s} +\frac{2s-1}{16} \right)\int_\Gamma \frac{|\nabla \eta|^{2s}}{\eta^s} \\
		&&=  \frac{2s-1}{2^{2s-1}}\int_\Gamma \Delta\eta \frac{|\nabla \eta|^{2s-2}}{\eta^{s-1}} -\frac{2s-2}{4^s}\int_{\Gamma} \frac{1}{\eta}\left(  2\eta_x \eta_y \eta_{xy}- \eta_x^2 \eta_{yy} - \eta_y^2 \eta_{xx} \right)\frac{|\nabla \eta|^{2s-4}}{\eta^{s-2}}\\
		&&\leq \frac1{4^s}\int_\Gamma \frac{|\nabla \eta|^{2s}}{\eta^s} + C(s) \int_\Gamma |\nabla^2 \eta|^s,
	\end{eqnarray*}
where we used the Young inequality, so the proof is finished.

\end{proof}
\begin{cor}
Let $s\geq 1$ and let $\eta\in W^{1,2s}(\Gamma)\cap W^{2,s}(\Gamma)$ such that $\eta > 0$ a.e. on $\Gamma$. Then
	\begin{eqnarray*}
		\int_{\Gamma}\frac{|\nabla \eta|^{2s}}{\eta^s} \leq C\int_\Gamma |\nabla^2 \eta|^s,
	\end{eqnarray*}
where $C$ only depends on $s$.
\end{cor}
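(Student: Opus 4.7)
The corollary differs from the preceding lemma only in that the pointwise lower bound $\eta>0$ has been weakened to the almost-everywhere statement $\eta>0$ a.e. on $\Gamma$. The natural plan is therefore a regularisation-plus-monotone-convergence argument: apply the lemma to a strictly positive approximation $\eta_\varepsilon$ of $\eta$ and then pass to the limit $\varepsilon\to 0$.

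Concretely, for $\varepsilon>0$ I would set $\eta_\varepsilon:=\eta+\varepsilon$. Since $\eta\in W^{1,2s}(\Gamma)\cap W^{2,s}(\Gamma)$ and adding a constant does not alter the distributional derivatives, $\eta_\varepsilon$ belongs to the same space with $\nabla\eta_\varepsilon=\nabla\eta$ and $\nabla^2\eta_\varepsilon=\nabla^2\eta$. Moreover $\eta_\varepsilon\geq\varepsilon>0$ a.e., so $\eta_\varepsilon$ satisfies the strict positivity hypothesis of Lemma~\ref{eta:n}. Applying that lemma yields
\begin{eqnarray*}
    \int_\Gamma\frac{|\nabla\eta|^{2s}}{(\eta+\varepsilon)^s}
    =\int_\Gamma\frac{|\nabla\eta_\varepsilon|^{2s}}{\eta_\varepsilon^s}
    \leq C\int_\Gamma|\nabla^2\eta_\varepsilon|^s
    =C\int_\Gamma|\nabla^2\eta|^s,
\end{eqnarray*}
with $C$ depending only on $s$, in particular independent of $\varepsilon$.

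For the passage to the limit I would use monotone convergence. As $\varepsilon\searrow 0$, the map $\varepsilon\mapsto (\eta+\varepsilon)^{-s}$ is monotonically non-decreasing at each point where $\eta\geq 0$, and converges pointwise to $\eta^{-s}$ on the full-measure set $\{\eta>0\}$. Multiplying by the non-negative factor $|\nabla\eta|^{2s}$ preserves monotonicity, so the integrands $|\nabla\eta|^{2s}/(\eta+\varepsilon)^s$ increase a.e.\ to $|\nabla\eta|^{2s}/\eta^s$, and the monotone convergence theorem gives
\begin{eqnarray*}
    \int_\Gamma\frac{|\nabla\eta|^{2s}}{\eta^s}
    =\lim_{\varepsilon\to 0^+}\int_\Gamma\frac{|\nabla\eta|^{2s}}{(\eta+\varepsilon)^s}
    \leq C\int_\Gamma|\nabla^2\eta|^s,
\end{eqnarray*}
which is the desired inequality.

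There is essentially no obstacle: the only point requiring a brief comment is that $\eta$ is non-negative a.e. (which, in the context of the paper, follows from $\eta$ being a plate displacement above the rigid floor, but in any case is implicit in writing $\eta>0$ a.e.\ together with the Sobolev regularity, which gives a continuous representative in the regime $s>1$ in $2$D; for the $s=1$ case one can replace $\eta+\varepsilon$ by $\max(\eta,0)+\varepsilon$ if needed, as the set $\{\eta<0\}$ has measure zero and hence contributes nothing to either side). Thus the approximation preserves the right-hand side exactly, and monotone convergence handles the left-hand side without further estimate.
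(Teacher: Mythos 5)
Your proposal is correct and follows essentially the same route as the paper: shift to $\eta+\varepsilon$, apply Lemma~\ref{eta:n} (noting the derivatives and hence the right-hand side are unchanged), and pass to the limit. The only cosmetic difference is that you invoke monotone convergence directly, whereas the paper first restricts to $\{\eta\geq\varepsilon\}$ and uses dominated convergence before letting $\varepsilon\to 0$; your version is, if anything, slightly cleaner.
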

\begin{proof}
Let $\delta>0$. Then, by previos lemma
	\begin{eqnarray*}
		\int_\Gamma\frac{|\nabla \eta|^{2s}}{(\eta+\delta)^s} \leq C\int_\Gamma |\nabla^2 \eta|^s.
	\end{eqnarray*}
Since $\{\eta = 0\}$ is of measure zero, we have that $\frac{|\nabla \eta|^{2s}}{(\eta+\delta)^s} \to \frac{|\nabla \eta|^{2s}}{\eta^s}$ a.e. on $\Gamma$. Moreover, as $\frac{|\nabla \eta|^{2s}}{(\eta+\delta)^s} < \frac{|\nabla \eta|^{2s}}{\eta^s}$ a.e. on $\Gamma$, we have for any $\varepsilon>0$
\begin{eqnarray*}
    \int_{\{\eta\geq \varepsilon\}}\frac{|\nabla \eta|^{2s}}{\eta^s}= \lim_{\delta\to 0} \int_{\{\eta\geq \varepsilon\}}\frac{|\nabla \eta|^{2s}}{(\eta+\delta)^s} \leq \lim_{\delta\to 0} \int_{\Gamma}\frac{|\nabla \eta|^{2s}}{(\eta+\delta)^s} \leq C\int_\Gamma |\nabla^2 \eta|^s
\end{eqnarray*}
by Lebesque dominated convergence theorem. Passing to the limit $\varepsilon \to 0$, the proof is complete. 
\end{proof}

\begin{lem}\label{ln:lemma}
Let $\eta_n \to \eta$ in $C([0,T]\times \Gamma)$ as $n\to\infty$ and assume that $\ln\eta_n$ is uniformly bounded in $C([0,T]; L^1(\Gamma))$. Then, for all $t\in [0,T]$ and any non-negative $\varphi\in C^\infty(\Gamma)$, one has
\begin{eqnarray}
    \int_\Gamma \ln \eta(t)^-\varphi \leq \lim_{n\to\infty} \int_\Gamma\ln\eta_n(t)^- \varphi. \label{ln:ineq}
\end{eqnarray}
Moreover, $\ln\eta \in C([0,T]; L^1(\Gamma))$.
\end{lem}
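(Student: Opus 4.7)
The plan is to prove the two assertions in sequence: first the lower-semicontinuity inequality $\eqref{ln:ineq}$, and then the strong $L^1$-continuity in time.

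For the inequality, the approach is Fatou's lemma. Since $\eta_n \to \eta$ in $C([0,T] \times \Gamma)$, for every fixed $t$ one has $\eta_n(t,x) \to \eta(t,x)$ pointwise on $\Gamma$. The map $a \mapsto (\ln a)^- := \max(-\ln a, 0)$ is continuous from $(0,\infty)$ to $[0,\infty)$ and extends continuously to $+\infty$ at $a = 0$, so $(\ln\eta_n(t))^- \to (\ln\eta(t))^-$ pointwise on $\Gamma$ (with value $+\infty$ on the set $\{\eta(t) = 0\}$). Applying Fatou's lemma to the non-negative sequence $(\ln\eta_n(t))^-\varphi$ yields $\eqref{ln:ineq}$. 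Taking $\varphi \equiv 1$ gives $\int (\ln\eta(t))^- \leq K := \|\ln\eta_n\|_{C([0,T];L^1(\Gamma))}$ uniformly in $t$, which in particular forces $|\{\eta(t)=0\}| = 0$ for every $t$. Combined with the trivial bound $(\ln\eta)^+ \leq \eta \leq \|\eta\|_{L^\infty((0,T)\times\Gamma)}$, this establishes $\ln\eta \in L^\infty(0,T; L^1(\Gamma))$.

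For continuity, I would split $\ln\eta = (\ln\eta)^+ - (\ln\eta)^-$. Since $(\ln\eta)^+ = \ln\max(\eta,1)$ and $a \mapsto \ln\max(a,1)$ is globally Lipschitz on $[0, \|\eta\|_\infty]$, the uniform continuity of $\eta$ on $[0,T]\times\Gamma$ already gives $(\ln\eta)^+ \in C([0,T]; L^\infty(\Gamma)) \hookrightarrow C([0,T]; L^1(\Gamma))$. It remains to handle $(\ln\eta)^-$. Fix $t_0 \in [0,T]$ and let $t_m \to t_0$. Using $|a-b| = a + b - 2\min(a,b)$ with $a = (\ln\eta(t_m))^-$, $b = (\ln\eta(t_0))^-$, together with $\min(a,b) \leq b \in L^1$ and $\min(a,b) \to b$ a.e. (from pointwise a.e.~convergence $a_m \to b$), dominated convergence yields
\[
\|(\ln\eta(t_m))^- - (\ln\eta(t_0))^-\|_{L^1(\Gamma)} = \int_\Gamma (\ln\eta(t_m))^- - \int_\Gamma (\ln\eta(t_0))^- + o(1),
\]
so it suffices to prove $\int (\ln\eta(t_m))^- \to \int (\ln\eta(t_0))^-$ as $m \to \infty$. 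The lower bound $\liminf_m \int (\ln\eta(t_m))^- \geq \int (\ln\eta(t_0))^-$ is the lemma's own inequality applied to the sequence $\eta(t_m) \to \eta(t_0)$ in $C(\Gamma)$, whose uniform $L^1$-bound on $(\ln\eta(t_m))^-$ was obtained in the first part.

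The matching upper bound is the main obstacle and amounts to uniform integrability of the family $\{(\ln\eta(t))^-\}_{t \in [0,T]}$. I would establish this via the layer-cake representation
\[
\int_{\{\eta(t)<c\}} (-\ln\eta(t)) = \mu_t(c)|\ln c| + \int_0^c \frac{\mu_t(s)}{s}\,ds, \qquad \mu_t(s) := |\{\eta(t) < s\}|,
\]
together with the Chebyshev-type bound $\mu_t(s) \leq K/|\ln s|$ for $s < 1$ (which follows from the uniform bound $\int (\ln\eta(t))^- \leq K$), and uniform-in-$t$ smallness of $\int_0^c \mu_t(s)/s\,ds$ as $c \to 0$ inherited from the approximating sequence (for which $\ln\eta_n \in C([0,T]; L^1)$ implies $t \mapsto \int_0^c \mu_t^n(s)/s\,ds$ is continuous, hence compact on $[0,T]$, and passes to the limit uniformly via the uniform convergence $\eta_n \to \eta$). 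This transfer of uniform integrability from the approximating sequence to the limit, where the fluid domain may degenerate, is the delicate point, and is ultimately what makes the uniform-in-$t$ bound possible despite the merely qualitative integrability on each time slice.
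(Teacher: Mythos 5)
Your proof of the inequality \eqref{ln:ineq} is correct and in fact cleaner than the paper's: one application of Fatou's lemma to the non-negative functions $\ln(\eta_n(t))^-\varphi$, which converge pointwise in $[0,\infty]$ to $\ln(\eta(t))^-\varphi$, replaces the paper's regularization by $\ln(\eta_n+\delta)^-$, its contradiction argument ruling out $|\{\eta(t)=0\}|>0$, and its Egorov step. Likewise, your reduction of the continuity claim --- via $(\ln\eta)^+\in C([0,T];L^\infty(\Gamma))$ and the identity $|a-b|=a+b-2\min(a,b)$ --- to the single statement $\int_\Gamma\ln(\eta(t_m))^-\to\int_\Gamma\ln(\eta(t_0))^-$ is sound, as is the Fatou lower bound for that quantity.

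The gap is the matching upper bound, exactly where you place it, and your proposed repair does not close it. Two concrete problems: first, the Chebyshev bound $\mu_t(c)\le K/|\ln c|$ only yields $\mu_t(c)|\ln c|\le K$, not smallness (this term can at least be reduced to the other one via $\mu_t(c)|\ln c|\le 2\int_c^{\sqrt c}\mu_t(s)s^{-1}\,ds$); second, and fatally, there is no mechanism for the ``transfer'' of uniform integrability: the only available comparison is $\mu_t(s)\le\mu_t^n(s+\|\eta-\eta_n\|_\infty)$, which is useless against the weight $s^{-1}$ near $s=0$, and the uniform integrability of $\{\ln\eta_n(t)^-\}_{t}$ for each \emph{fixed} $n$ (which is all that $\ln\eta_n\in C([0,T];L^1)$ gives) carries no uniformity in $n$. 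In fact the required equi-integrability does not follow from the stated hypotheses at all. Sketch: near a point of the torus let $\eta(0,x)=\min(|x|^{1/2},1)$, let $\eta(1/m,\cdot)$ agree with $\eta(0,\cdot)$ for $|x|\ge r_m$ but dip continuously down to the constant $e^{-r_m^{-2}}$ on $\{|x|<r_m/2\}$ with $r_m\to0$, interpolate linearly (hence convexly) in $t$ on each $[1/(m{+}1),1/m]$, and set $\eta_n=\eta+1/n$. Then $\eta\in C([0,T]\times\Gamma)$, $\eta_n\to\eta$ uniformly, and $\ln\eta_n\in C([0,T]\times\Gamma)\subset C([0,T];L^1(\Gamma))$ with a bound uniform in $n$ (each dip contributes at most $\pi r_m^2\cdot r_m^{-2}=\pi$, and convexity of $-\ln$ controls intermediate times); yet $\liminf_m\int_\Gamma\ln(\eta(1/m))^-\ge\int_\Gamma\ln(\eta(0))^-+\pi/4$, so $t\mapsto\ln(\eta(t))^-$ is not $L^1$-continuous at $t=0$. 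Hence no argument can derive the ``Moreover'' part from these hypotheses alone; one needs equi-integrability of $\{\ln\eta_n(t)^-\}_{n,t}$ (or compactness of $\{\ln\eta_n\}$ in $C([0,T];L^1)$) as an additional input, or it must be extracted from the PDE. For what it is worth, the paper's own proof conceals the identical difficulty: it asserts that a single small $\kappa$ makes $I_3=\int_\Gamma|\ln(\eta(t_m)+\kappa)^--\ln(\eta(t_m))^-|$ small for all $m$ simultaneously, which is precisely the missing equi-integrability. So you have correctly isolated the crux, but neither your argument nor the paper's resolves it; only the inequality \eqref{ln:ineq}, which is what the passage to the limit in the contact inequality actually uses, is safely established.
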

\begin{proof}
Let $\delta\in(0,1)$ and fix $t\in[0,T]$. Then, since $x\mapsto \ln(x+\delta)^-$ is continuous on $[0,\infty)$, we have that $\ln(\eta_n+\delta)^-\to \ln (\eta+\delta)^-$ in $C([0,T]\times \Gamma)$ as $\delta\to 0$ and consequently
\begin{eqnarray*}
    \ln (\eta(t)+\delta)^- = \lim_{n\to\infty} \ln(\eta_n(t)+\delta)^- \quad \text{ on } \Gamma.
\end{eqnarray*}
Therefore, since $\ln(x+\delta)^- \leq \ln(x)^-$ on $[0,\infty)$, one has
\begin{eqnarray*}
    \int_\Gamma \ln (\eta(t)+\delta)^-\varphi = \lim_{n\to\infty}  \int_\Gamma\ln(\eta_n(t)+\delta)^- \varphi\leq \lim_{n\to\infty}  \int_\Gamma\ln\eta_n(t)^- \varphi,
\end{eqnarray*}
for any non-negative $\varphi\in C^\infty(\Gamma)$. Now, assume there is a set of positive measure $S\subset \Gamma$ such that $\eta(t)=0$ on $S$ and let $\varphi>0$ on $S$. Then,
\begin{eqnarray*}
   |\ln \delta|\int_S \varphi  \leq  \int_S \ln (\eta(t)+\delta)^-\varphi \leq  \int_\Gamma \ln (\eta(t)+\delta)^-\varphi \leq \lim_{n\to\infty}  \int_\Gamma\ln\eta_n(t)^- \varphi,
\end{eqnarray*}
so the left-hand side blows up as $\delta\to 0$ which is a contradiction. Therefore, $\eta(t)=0$ on a set of measure zero and consequently $\ln(\eta(t)+\delta)^-\to \ln\eta(t)^-$ a.e. on $\Gamma$. By the Theorem of Egorov, for every $\varepsilon>0$ there exists $A_\varepsilon\subset \Gamma$ such that $|\Gamma\setminus A_\varepsilon|<\varepsilon$ and $\ln(\eta(t)+\delta)^-\to \ln(\eta(t))^-$ uniformly on $A_\varepsilon$, so
\begin{eqnarray*}
    \lim_{\delta\to 0}\int_{A_\varepsilon} \ln (\eta(t)+\delta)^-\varphi =\int_{A_\varepsilon} \ln \eta(t)^-\varphi \leq 
    \lim_{n\to \infty} \int_\Gamma\ln\eta_n(t)^- \varphi.
\end{eqnarray*}
Passing to the limit $\varepsilon\to 0$, we obtain $\eqref{ln:ineq}$. \\

Now, let $[0,T]\ni t_m \to t$ as $m\to\infty$ and let $\kappa>0$. Then
\begin{eqnarray*}
    &&\int_\Gamma |\ln\eta(t)^--\ln\eta(t_m)^-| \\
    &&\leq   \int_\Gamma |\ln(\eta(t))^--\ln(\eta(t)+\kappa)^-|+\int_\Gamma |\ln(\eta(t)+\kappa)^--\ln(\eta(t_m)+\kappa)^-|\\
    &&\quad+\int_\Gamma |\ln(\eta(t_m)+\kappa)^--\ln(\eta(t_m))^-| =: I_1 + I_2 + I_3.
\end{eqnarray*}
Since $\ln(\eta(t))^- \in L^1(\Gamma)$ and since $\ln(\eta(t)+\kappa)^- \leq \ln(\eta(t))^-$, we have by the Lebesque dominated convergence theorem that
\begin{eqnarray*}
    \ln(\eta(t)+\kappa)^- \to \ln(\eta(t))^- \quad \text{and} \quad \ln(\eta(t_m)+\kappa)^- \to \ln(\eta(t_m))^- \quad \text{as }\kappa \to 0
\end{eqnarray*}
where the second convergence follows in the same way. Therefore, for any $a>0$, there is a $\kappa>0$ small enough such that $I_1 + I_3 \leq \frac{2a}3$. Next, since $t\mapsto \int_\Gamma\ln(\eta(t)+\kappa)^-\in C([0,T])$, there is an $m_0$ large enough such that for all $m\geq m_0$ one has $I_2 \leq \frac{a}3$. Therefore, one concludes that 
\begin{eqnarray*}
   \lim_{m\to \infty}\int_\Gamma |\ln\eta(t)^--\ln\eta(t_m)^-| =0.
\end{eqnarray*}
Since $\{t_m\}_{m\in \mathbb{N}}$ was arbitrary, the proof is finished.

\end{proof}

\noindent
\textbf{Acknowledgment}. This research was supported by the Science Fund of the Republic of Serbia, GRANT No TF C1389-YF, Project title - FluidVarVisc. The author would like to express his gratitude to Boris Muha for his useful comments and suggestions, which in particular gave rise to the second case in Theorem \ref{main2}.

	
\end{document}